\newtheorem{Theorem}{Theorem}[section]
\newtheorem{Lemma}[Theorem]{Lemma}
\newtheorem{Proposition}[Theorem]{Proposition}
\newtheorem{Corollary}[Theorem]{Corollary}
\newtheorem{Example}[Theorem]{Example}
\newenvironment{Proof*}{{\it Proof.}}
\newcommand{\NN}{\mathbb{N}}
\newcommand{\ZZ}{\mathbb{Z}}
\newcommand{\BB}{\mathcal{B}}
\newcommand{\nn}{\mathcal{N}}
\newcommand{\mm}{\mathcal{M}}
\newcommand{\DEF}[1]{\emph{#1}}
\newcommand{\GF}[1]{\mathop{GF}({#1})}
\newcommand{\Ann}{{\rm Ann}}
\newcommand{\diam}[1]{{\rm diam}(#1)}
\newcommand{\girth}[1]{{\rm girth}(#1)}
\begin{document}

\title{The zero--divisor graphs of rings and semirings}
\author{David Dol\v zan, Polona Oblak}
\date{\today}

\address{D.~Dol\v zan:~Department of Mathematics, Faculty of Mathematics
and Physics, University of Ljubljana, Jadranska 19, SI-1000 Ljubljana, Slovenia; e-mail: 
david.dolzan@fmf.uni-lj.si; tel: +38614766624; fax: +38614766600}
\address{P.~Oblak: Faculty of Computer and Information Science,
Tr\v za\v ska 25, SI-1000 Ljubljana, Slovenia; e-mail: polona.oblak@fri.uni-lj.si}

 \subjclass[2010]{05C25, 16Y60, 13M99}
 \keywords{ring, semiring, zero-divisor, graph}
\bigskip

\begin{abstract} 
In this paper we study  zero--divisor graphs of rings and semirings.
We show that all zero--divisor graphs of (possibly noncommutative) semirings are connected and have 
diameter less than or equal to 3. We characterize all acyclic zero--divisor graphs of semirings and
prove that in the case  zero--divisor graphs are cyclic, their  girths are less than or equal to 4. 
We find all possible cyclic zero--divisor graphs over commutative semirings 
having at most one 3-cycle, and characterize all complete $k$-partite and regular zero--divisor graphs.
 Moreover, we characterize all additively cancellative commutative
semirings and all commutative rings such that their zero--divisor graph has exactly one 3-cycle.
\end{abstract}

\maketitle 

\section{Introduction}

\bigskip


For any semigroup $S$ with zero, we denote by $Z(S)$ the set of zero--divisors, $Z(S)=\{x \in S;$ there exists
$0 \neq y \in S \text { such that } xy=0 \text { or } yx=0 \}$. 
We denote by $\Gamma(S)$ the \DEF{zero--divisor graph} of $S$. 
The vertex set $V(\Gamma(S))$ of $\Gamma(S)$ is the set of elements in $Z(S)^*=Z(S) \setminus \{0\}$ and 
an unordered pair of vertices $x,y \in V(\Gamma(S))$, $x \neq y$, is an edge 
$x-y$ in $\Gamma(S)$ if $xy = 0$ or $yx=0$.

Similarly, we can define the zero--divisor graphs of other algebraic structures, e.g. rings, semirings, near-rings, algebras.

\bigskip

The zero--divisor graphs of rings have been first introduced by Beck in \cite{beck88} in the study of 
commutative rings, and later studied by various authors, see for example
 \cite{akbari04, akbari06, akbari07, andliv99,andbad08, andmul07, bozpet09,
 chleewang10, lucas06, redmond02}.
The zero--divisor graphs are also intensely studied in the semigroup setting, e.g. \cite{dem02, demdem, dem10, dem10rm}.
Recently, they were used to study near-rings (see e.g.  \cite{cann05}) and
semirings (see e.g. \cite{atani08, atani09}).

\bigskip

In this paper we investigate the interplay between the algebraic properties of a (semi)ring and the graph theoretic properties of its zero--divisor graph.
In the next section, we give all necessary definitions.  In Section 3, we survey some of the known results of
the theory of the zero--divisor graphs over semigroups, rings, and semirings, and extend these results to a more general
setting of a noncommutative semiring and  we characterize all acyclic zero--divisor graphs of semirings (Theorem \ref{thm:characterization}). 
Next, we study the cyclic zero--divisor graphs. Firstly, we characterize the complete $k$-partite and regular zero--divisor
graphs that can appear as the zero--divisor graphs of commutative semirings (Theorem \ref{thm:kpartite} and 
Corollary \ref{thm:regular}). 
In the case the zero--divisor graph of a commutative semiring contains at most one triangle, we find 
all possible zero--divisor graphs (Theorems \ref{thm:characterizationg=4} and
 \ref{thm:completecharacterizationg=3}, Proposition \ref{thm:characterizationg=3}).  
 If the zero--divisor graph of a commutative semiring is cyclic and contains no triangles, we describe the order of the nilpotent elements in the semiring (Proposition \ref{thm:nilp}).
In the case the zero--divisor graph of an additively cancellative semiring contains exactly one triangle, we prove that the semiring has to be a ring 
(Proposition \ref{thm:ACsemi}) and we then proceed to characterize
all rings and their zero--divisor graphs containing exactly one triangle (Theorem \ref{thm:13-cycle}).

\bigskip
\bigskip

\section{Definitions}

\bigskip

A \emph{semiring} is a set $S$ equipped with binary operations $+$ and $\cdot$ such that $(S,+)$ is a commutative monoid with identity element 0, and $(S,\cdot)$ is a monoid with identity element 1. 
In addition, operations $+$ and $\cdot$ are connected by distributivity and 0 annihilates $S$. A semiring is 
\emph{commutative} if $ab=ba$ for all $a,b \in S$. 
 A semiring is \DEF{entire} (or \DEF{zero--divisor-free})  if $ab=0$ implies that $a=0$ or $b=0$. 
The semiring $S$ is \emph{additively cancellative} if $a+c=b+c$ implies that $a=b$ for all $a,b,c \in S$.

The simplest example of a commutative semiring  is the \DEF{binary Boolean semiring}, the set $\{0,1\}$
in which $1+1=1\cdot1=1$. We denote the binary Boolean semiring by $\BB$.
Moreover, the set of nonnegative integers (or reals) with the usual operations of addition and multiplication,
is a commutative  semiring. Other examples of commutative semirings are distributive lattices, tropical 
semirings etc.

\bigskip

The sequence of edges $x_0 - x_1$, $ x_1 - x_2$, ..., $x_{k-1} - x_{k}$ in a graph is called \emph{a path of length $k$} and is 
denoted by $x_0 - x_1 - \ldots - x_k$ or $P_{k+1}$. The \DEF{distance} between two vertices is the length of the shortest
path between them. The \DEF{diameter} $\diam{\Gamma}$ of the graph $\Gamma$ is the longest
 distance between any two vertices of the graph.
A path $x_0 - x_1 - \ldots - x_{k-1} - x_0$ is called a \emph{cycle}.  The \DEF{girth} of the graph $\Gamma$
is the length of the shortest cycle contained in the graph  and will be denoted by $\girth{\Gamma}$.

\bigskip

The complete 
graph  will be denoted by $K_n$ and complete bipartite graph by $K_{m,n}.$
We say that the \emph{star graph} is a complete bipartite graph $K_{1,n}$.  Note that $K_{1,0}=K_1$.
The \emph{two-star graph} $S_{m,n}$, where $n ,m \in {\mathbb N}\cup\{0\}$, is a graph with the set of 
vertices  equal to
the set $\{ v_1,v_2, u_1,u_2,\ldots,u_m,w_1,w_2,\ldots,w_n\}$, and with the following edges: $v_1 - v_2$, $u_i - v_1$ 
for $i=1,2,\ldots,m$, and $w_j - v_2$ for $j=1,2,\ldots,n$. Note that $S_{0,n}=K_{1,n+1}$ is a star graph.
\begin{figure}[h]
	\centering
	\begin{tikzpicture}[style=thick, scale=0.75]
	\draw (-5,0) node[anchor=west]{$S_{3,5}:$};
		\draw (0,1.5) -- (0,0) -- (1,1);
		\draw  (1.5,0) -- (0,0);
		\draw  (1,-1) -- (0,0);
		\draw  (0,-1.5) -- (0,0) -- (-2,0) -- (-3,0);				
		\draw  (-2.5,1) -- (-2,0) -- (-2.5,-1);
		\draw[fill=white] (0,1.5) circle (1mm)  (0,0) circle (1mm) (1,1) circle (1mm) (1.5,0) circle (1mm) (1,-1) circle (1mm) (0,-1.5) circle (1mm)  (-2,0) circle (1mm) (-3,0) circle (1mm) (-2.5,1) circle (1mm) (-2.5,-1) circle (1mm);
		\end{tikzpicture}
	\end{figure}


Let $\overline{K}^{\; r}_{m,n}$ be the complete bipartite graph $K_{m,n}$ together with $r$ 
 vertices $v_1,v_2,\ldots,v_r$ and edges $v_i-a$ for all $i$ and some vertex $a$, 
 such that $\deg(a)=n$ in the induced subgraph $K_{m,n}$.
Moreover, choose vertex $b$, such that $\deg(b)=m$ and $a-b$ is an edge in $\overline{K}^{\; r}_{m,n}$. 
Denote by  $K^{\;  \triangle(r_1,r_2,r_3)}_{m,n}$ 
the graph   $\overline{K}^{\; r_1}_{m,n}$ together with vertices $e$, $v_i$, $w_j$
and edges $a-e-b$,  $b-v_i$, $e-w_j$, $i=1,2,\ldots,r_2$, $j=1,2,\ldots,r_3$.

\begin{figure}[h]
	\centering
	\begin{tikzpicture}[style=thick, scale=0.75]
	\draw (-4,-0.5) node[anchor=west]{$\overline{K}^{\; 5}_{2,3}:$};
	\draw \foreach \x in {0,22.5,45,67.5,90} {
				(0,0) node{} -- (\x:1) node{}
			};
 		\path (-1,0) coordinate(c);
		\path (0,-2) coordinate(b);
		\path (-1,-2) coordinate(d1);
		\path (-2,-2) coordinate(d2);
		\draw \foreach \x in {(b),(d1),(d2)} {
				(0,0)  -- \x
				(c)  -- \x
			};
		\draw[fill=white] (c) circle (1mm) (b) circle (1mm) (d1) circle (1mm) (d2) circle (1mm) ;					\draw[fill=white] (0,0) circle (1mm) (0:1) circle (1mm) (45:1) circle (1mm) (90:1) circle (1mm) (67.5:1) circle (1mm) (22.5:1) circle (1mm);
\end{tikzpicture}
		\qquad \qquad\qquad
\begin{tikzpicture}[style=thick, scale=0.75]
	\draw (-4.5,-0.5) node[anchor=west]{$K^{\;  \triangle(5,0,3)}_{2,3}:$};
		\draw \foreach \x in {0,22.5,45,67.5,90} {
				(0,0) node{} -- (\x:1) node{}
			};
		\path (-1,0) coordinate(c);
		\path (0,-2) coordinate(b);
		\path (-1,-2) coordinate(d1);
		\path (-2,-2) coordinate(d2);
		\draw \foreach \x in {(b),(d1),(d2)} {
				(0,0)  -- \x
				(c)  -- \x
			};
		\draw  (0,0) -- (1,-1) -- (b);				
		\draw  (1.5,-0.5)  -- (1,-1) -- (1.5,-1.5);	
		\draw  (1.7,-1)  -- (1,-1);	
 		\draw[fill=white] (0,0) circle (1mm) (0:1) circle (1mm) (45:1) circle (1mm) (90:1) circle (1mm) (67.5:1) circle (1mm) (22.5:1) circle (1mm);
		\draw[fill=white] (c) circle (1mm) (b) circle (1mm) (d1) circle (1mm) (d2) circle (1mm) (1,-1) circle (1mm) (1.5,-0.5) circle (1mm) (1.5,-1.5) circle (1mm) (1.7,-1) circle (1mm);	
		
				\end{tikzpicture}
	\end{figure}
	
%


%

\bigskip
\bigskip

\section{The zero--divisor graph of a semiring}

\bigskip

Let us investigate the zero--divisor graph of an arbitrary  (possibly noncommutative) semiring.

Firstly, we shall prove that the zero--divisor graphs of (noncommutative) semirings are always connected and 
have diameters at most $3$. 
This is a similar result to \cite[Thm. 3.2]{redmond02} (for rings) and \cite[Lemma 2.1]{atani08}
(for commutative semirings). 

\bigskip

\begin{Theorem}\label{thm:diam3}
 If $S$ is a semiring, then $\diam{\Gamma(S)}\leq 3$.
\end{Theorem}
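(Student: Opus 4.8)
The plan is to adapt the classical ring-theoretic argument (as in \cite[Thm. 3.2]{redmond02}) to the semiring setting, being careful about noncommutativity. Fix two distinct vertices $x,y \in Z(S)^*$. If $xy=0$ or $yx=0$ then $x-y$ is an edge and the distance is $1$, so we may assume $xy \neq 0$ and $yx \neq 0$. Since $x,y$ are zero--divisors, choose $0 \neq a \in S$ with $xa=0$ or $ax=0$, and $0 \neq b \in S$ with $yb=0$ or $by=0$. We want to produce a path of length $2$ or $3$ joining $x$ to $y$, and the elements we use to bridge the gap will be built out of $a$, $b$, and the products $ab$, $xb$, $ay$, etc., exactly as in the ring case, except that addition in a semiring need not be cancellative so we cannot pass to differences; the saving grace is that all the relevant products are genuinely zero (not merely equal), so no subtraction is needed.

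The key steps, in order, are as follows. First, reduce to the case where there exist nonzero $a,b$ with (say, after relabelling left/right) $ax=0$ and $by=0$; the other sign combinations are handled symmetrically. Second, consider the product $ab$ (or $ba$). If $ab \neq 0$: examine whether $ab$ equals $x$ or $y$, and whether $ab$ annihilates $x$ and $y$ on the appropriate side. One checks $(ab)$ multiplied suitably against $x$ and against $y$ — using associativity and the relations $ax=0$, $by=0$ — to see that $ab$ is adjacent to both $x$ and $y$ (or is equal to one of them, in which case the distance is even smaller), giving $x - ab - y$, a path of length $2$. Third, if $ab = 0$, then $a$ and $b$ are both nonzero zero--divisors; now analyze whether $xb=0$, $ay=0$, or $x=b$, $y=a$, etc. In the generic subcase one obtains the length-$3$ path $x - b - a - y$ after verifying each of the three edges from the hypotheses $ax=0$, $ab=0$, $by=0$ and checking the three vertices $x,b,a,y$ are pairwise distinct; the degenerate subcases (some of these coincide or an extra product vanishes) only shorten the path. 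Throughout, one must also handle the possibility that a bridging element like $ab$ coincides with $x$ or $y$, which is why the bound is $\diam{\Gamma(S)} \le 3$ rather than an equality.

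The main obstacle, compared with the ring case, is precisely that one cannot form differences: the usual ring proof sometimes uses an element such as $x+ab$ or $a-b$ as the intermediate vertex, and such tricks are unavailable here since $(S,+)$ is only a commutative monoid. The fix is to observe that in every case the ring proof can be rearranged so that the intermediate vertices are products (or the given elements $a,b$ themselves) rather than sums or differences — this is the content of the analogous commutative-semiring result \cite[Lemma 2.1]{atani08}, and the noncommutative version requires only that we track left versus right annihilation consistently. A secondary, purely bookkeeping, obstacle is the case analysis on the four sign patterns for $a$ and $b$ and on which of the candidate bridging elements are zero or coincide with $x$ or $y$; none of these cases is hard, but care is needed to ensure a path of length at most $3$ is produced in each one. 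Once connectivity and the diameter bound are established simultaneously by this case analysis, the theorem follows.
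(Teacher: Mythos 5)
Your overall strategy --- choose annihilators $a$ of $x$ and $b$ of $y$ and bridge the gap with products rather than sums --- is the same as the paper's, but it breaks down at exactly the noncommutative point the theorem has to address. First, the reduction ``after relabelling left/right'' to the single pattern $ax=0$, $by=0$ is not legitimate: passing to the opposite semiring only identifies the pattern $(ax=0,\,by=0)$ with $(xa=0,\,yb=0)$ and $(ax=0,\,yb=0)$ with $(xa=0,\,by=0)$, so two genuinely different patterns remain, the ``same-side'' one and the ``mixed'' one. Second, and this is the real gap, in the same-side pattern $ax=0$, $by=0$ your Step 2 is false: associativity gives $(ab)y=a(by)=0$, so $ab$ is adjacent to $y$, but neither $(ab)x=a(bx)$ nor $x(ab)=(xa)b$ is forced to vanish (you know $ax=0$, not $xa=0$), and symmetrically $ba$ is adjacent to $x$ but not, in general, to $y$. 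Hence the asserted length-$2$ path $x-ab-y$ need not exist; the claim that $ab$ works ``on the appropriate side'' is precisely the commutative argument, and it is only valid in the mixed pattern $ax=0$, $yb=0$, where $x-ba-y$ does work.

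The paper closes this case with a different bridging element. After discarding the degenerate possibilities $a=b$, $ab=0$, $ba=0$, $ay=0$, $ya=0$, $bx=0$, $xb=0$ (each of which yields $d(x,y)\le 3$ immediately, e.g.\ via $x-a-b-y$ when $ab=0$), one uses $ya$ in the pattern $ax=0$, $by=0$: then $(ya)x=y(ax)=0$ and $b(ya)=(by)a=0$, so $x-ya-b-y$ is a path of length at most $3$ (and $ay$ plays the same role when $xa=0$, $yb=0$). Your preamble lists $ay$ among the potential bridges, but your actual case analysis never uses it, so the same-side case with $ab\ne 0$ and $ba\ne 0$ is left unproved. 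A small additional slip: in your Step 3 the path should be $x-a-b-y$, since the hypotheses you invoke ($ax=0$, $ab=0$, $by=0$) justify those three edges, not the edges of $x-b-a-y$.
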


\medskip

\begin{proof}
 Take $x,y \in Z(S)^*$, such that $xy \ne 0$ and $yx \ne 0$. We want to show that there is a path 
 from $x$ to  $y$  and $d(x,y)\leq 3$.
 
 There exist $a,b \in Z(S)^*$, such that $ax=0$ or $xa=0$ and $by=0$ or $yb=0$.
 Note that here, $a$ can be equal to $x$, as well as $b$ equal to $y$.
 If $a=b$, $ab=0$, $ba=0$, $ay=0$, $ya=0$, $bx=0$ or $xb=0$, then
 $d(x,y)\leq 3$. So, suppose, none of the above is true. In the case $ax=0$, we have that either
 $x-ba-y$ or $x-ya-b-y$ is a path joining $x$ and $y$. Otherwise, if $xa=0$,  either
 $x-ab-y$ or $x-ay-b-y$ is a path from $x$ to $y$. All of these four paths  are of length at most 3, even if
 some of the vertices coincide, and therefore $d(x,y)\leq 3$. 
\end{proof}

\bigskip

 Anderson and Mulay  \cite[Thm.~2.8]{andmul07}  proved that 
 for direct products of integral domains and their subrings,  the diameter is at most 2. We generalize
 this result to noncommutative entire semirings.
  
  \bigskip

\begin{Proposition}\label{thm:SxS}
 If $S_1$ and $S_2$ are entire semirings and $S \subseteq S_1 \times S_2$ is a semiring.
 If $\Gamma(S)\ne \emptyset$, then  
 $\diam{\Gamma(S)}\leq 2$.
\end{Proposition}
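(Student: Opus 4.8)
The plan is to take two distinct vertices $x, y \in Z(S)^*$ that are not already adjacent and produce a common neighbor, i.e.\ a vertex $z \in Z(S)^*$ with $xz = zx = 0$ and $yz = zy = 0$, so that $x - z - y$ is a path of length $2$. Write $x = (x_1, x_2)$ and $y = (y_1, y_2)$ with the $i$-th coordinate in $S_i$. Since $x$ is a zero-divisor in $S$, there is $0 \ne a = (a_1, a_2) \in S$ with $ax = 0$ or $xa = 0$; working coordinatewise and using that each $S_i$ is entire, in coordinate $i$ we get $a_i = 0$ or $x_i = 0$. Since $a \ne 0$, at least one coordinate of $a$ is nonzero, say $a_1 \ne 0$; then $x_1 = 0$. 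Similarly $y$ being a zero-divisor forces some coordinate of $y$ to vanish.

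First I would dispose of the case where $x$ and $y$ have a common zero coordinate, say $x_1 = y_1 = 0$: then $z = (1, 0)$ satisfies $xz = zx = 0$ and $yz = zy = 0$ (here $1$ is the identity of $S_1$), provided $z$ actually lies in $S$ and in $Z(S)^*$. This is exactly where I expect the main obstacle: we only know $S \subseteq S_1 \times S_2$ is a subsemiring, so $(1,0)$ need not belong to $S$. The fix is to build the common neighbor from the elements already known to exist in $S$. Concretely, from the zero-divisor witnesses we have $0 \ne a \in S$ with (say) $x_1 = 0$ and $a_1 \ne 0$, and $0 \ne b \in S$ with some coordinate of $b$ zero. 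I would argue by cases on which coordinates of $x$ and $y$ vanish, and in each case exhibit a suitable product of $x, y, a, b$ and the identity $1 = (1,1) \in S$ that serves as the common neighbor; for instance, if $x_1 = 0$ and $y_2 = 0$, one checks that an appropriate element annihilates both $x$ and $y$ on the relevant coordinates while being nonzero because $S$ is a subsemiring of a product of entire semirings.

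The key steps, in order, are: (1) reduce to finding a common neighbor of two non-adjacent vertices; (2) use entireness of $S_1, S_2$ to show every vertex has a zero coordinate, and record the nonzero-coordinate witness; (3) split into the (at most) three essentially different cases according to the pattern of vanishing coordinates of $x$ and $y$, noting that if $x$ and $y$ were nonzero in complementary coordinates they would already be adjacent; (4) in each remaining case produce the common neighbor from products of $x$, $y$, their zero-divisor witnesses, and $1$, and verify it is nonzero (using entireness) and annihilates both $x$ and $y$ on both sides. I would also remark at the end that the same argument shows $\diam{\Gamma(S)} \le 1$ is possible and that the bound $2$ is sharp, matching \cite[Thm.~2.8]{andmul07}, though that sharpness claim is not needed for the statement.
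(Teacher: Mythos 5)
Your proposal is correct and follows essentially the same route as the paper's proof: every vertex of $\Gamma(S)$ has a zero coordinate by entireness, two non-adjacent vertices must then vanish in the same coordinate, and an annihilating witness (the paper takes a neighbour of $x$ supplied by connectivity) has complementary support and therefore annihilates both, giving a path of length $2$. The only point to tidy up is that in the one nontrivial case, where $x$ and $y$ share their zero coordinate, the witness $a$ of $x$ is itself already the common neighbour -- no product with $y$, $b$ or $1$ is needed -- whereas your illustrative case $x_1=0$, $y_2=0$ is the trivial one in which $x$ and $y$ are adjacent.
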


\medskip

\begin{proof}
 If  $S \subseteq S_1 \times S_2$, where $S_1$ and $S_2$ are entire semirings, then 
 $(s_1,s_2) \in Z(S)^*$ implies that either $s_1=0$ or $s_2=0$.
 
 Assume $\diam{\Gamma(S)}\geq 2$. Then, there exist $x,y \in Z(S)^*$, such that $xy \ne0$, $yx\ne 0$.
 Without loss of generality, let us assume that $x=(x_1,0)$. This implies that $y=(y_1,0)$. 
 Since $\Gamma(S)$ is a connected graph, there exists an edge $x-z$ in $\Gamma(S)$ and 
 $z=(0,z_1)$. Thus $x-y-z$ is a path in $\Gamma(S)$ and $\diam{\Gamma(S)}\leq 2$.
\end{proof}

\bigskip

In the following examples we show that some families of the graphs with $\diam \Gamma \leq 2$
can be realized as the zero--divisor graphs of semirings.
We will later need the realization of these families of graphs in the characterization of complete $k$-partite and
regular zero--divisor graphs.

\medskip

We will denote by $\mm_n(S)$ the set of all $n \times n$ matrices over a semiring $S$.
The matrix with the only nonzero entry 1 in the $i$th row and $j$th column will be denoted by $E_{i,j}$. 
The matrix $I_n$ will denote the $n \times n$ identity matrix, $0_n$ will denote the $n \times n$ zero matrix 
and $J_n$ will denote the matrix $E_{1,2}+E_{2,3}+\ldots+E_{n-1,n}$.
Also, let us denote by $A \oplus B$ the direct sum of matrix blocks $\left[\begin{matrix}
A&0\\
0&B
\end{matrix}
\right]$.

\bigskip
\begin{Example}\label{ex:Kmn}
We can realize all complete bipartite graphs as zero--divisor graphs of a direct product of two
semirings.   Namely, if $S$ and $T$ are arbitrary entire semirings  with $\vert S \vert = m+1$ and 
$\vert T \vert=n+1$,  then $\Gamma(S\times T)=K_{n,m}$. Such $S$ and $T$ exist, for example we can 
choose totally ordered  sets of appropriate cardinality.
 \hfill$\square$
\end{Example}

\bigskip
\begin{Example}\label{ex:Kn}
Anderson and Livingston \cite[Thm.~2.8]{andliv99} proved that  if the zero--divisor graph of a commutative ring 
$R$ with 1 is equal to $\Gamma(R)=K_n$, $n \geq 3$, then all zero--divisors are nilpotents 
of order 2. This statement does not hold for semirings.  However, we can show that if $S$ is a commutative 
semiring and $\Gamma(S)=K_n$, $n\geq 3$, then $x^2=0$ for all but possibly one $x \in Z(S)^*$.

Suppose $Z(S)^*=\{a_1,a_2\ldots,a_n\}$ and $a_1^2\ne 0$. Since $\Gamma(S)=K_n$, it follows that 
$a_ia_j=0$ for all $i\ne j$, and therefore $a_1(a_1+a_i)=a_1^2\ne 0$ for $i\ne 1$.
Since $a_j(a_1+a_i)=0$ for all $j \ne i$, $j,i \ne 1$, it follows that $a_1+a_i \in Z(S)^*$, thus
$a_1+a_i=a_1$ for all $i\ne 1$. By multiplying this equation by $a_i$, we have $a_i^2=0$ for 
all $i \ne 1$.

Moreover, such semirings $S$ indeed exist. Consider a semiring $S$ in $\mm_{2n-1}(\BB)$, generated by 
$a_i=J^{n-2+i}+J^{n-1+i}+\ldots+J^{2n-2}$, where $i=1,2,\ldots,n$. Since $J^{2n-1}=0$,
we have $a_i a_j =0$ for all $i,j=1,2,\ldots,n$ but for $i=j=1$. Therefore $\Gamma(S)=K_n$,
$a_i^2=0$ for all $i \ne 1$ and $a_1^2\ne 0$.
 \hfill$\square$
\end{Example}
\bigskip

The next two examples show that we can also realize all possible star and two-star graphs as the zero--divisor graphs of 
(even commutative) semirings.
Compare this with \cite[Ex. 2.1]{andliv99} where it has been shown that for a commutative ring, the zero--divisor graph cannot 
be equal to $P_4=S_{1,1}$.

\bigskip
\begin{Example}\label{ex:K1n}
 Let $\mm_{n+1}(\BB)$ be the semiring of $n+1$ by $n+1$ matrices over the Boolean semiring, where $n \geq 1$,
  and denote by  $S$ the subsemiring generated by the set 
  $\{I_1 \oplus 0_{n}, 0_1 \oplus I_{n}, 0_1 \oplus I_{n} + J_{n}\}$.  
  The zero--divisors in the semiring $S$ are of two types, 
 $I_1 \oplus 0_{n}$ and $0_1 \oplus (I_{n} + J_{n} + J_{n}^2 + \ldots + J_{n}^k)$ for $k=0,1, \ldots n-1$.
 It can be easily verified that then only the products of the element $I_1 \oplus 0_{n}$ 
 with $0_1 \oplus (I_{n} + J_{n} + J_{n}^2 + \ldots + J_{n}^k)$ 
 are equal to zero for all $k$, 
 so $\Gamma(S)=K_{1,n}=S_{0,n-1}$.
 
 Obviously, we can realize the graph $K_1=K_{1,0}$ as the zero--divisor graph of a semiring, for example the (semi)ring ${\mathbb Z}_4$.
 \hfill$\square$
\end{Example}

\bigskip

\begin{Example}\label{ex:K1nm}
 Choose $n, m \in {\mathbb N}$.
 Let $L=\{0,x_1,x_2,x_3,\ldots, 1\}$
 be any totally ordered (distributive) lattice containing at least $\max\{n,m\}$ nonzero elements.  
 Then $L$ is also an entire semiring
 for the operations $x_i + x_j=x_{\max\{i,j\}}$ and $x_i \cdot x_j=x_i x_j=x_{\min\{i,j\}}$.  
 
 Now, let $\mm_4(L)$ denote the semiring of all
 $4 \times 4$ matrices over $L$.  
 Denote $v_1=0_2 \oplus x_1J_2$ and $v_2=(x_1(I_2+J_2)) \oplus 0_2$.
 For $i=1,2,\ldots,n$ define 
 $u_i=(x_1I_2+x_iJ_2) \oplus x_1J_2$, and
 for $j=1,2,\ldots,m$ define
 $w_j=0_2 \oplus (x_j(I_2+J_2))$.
 Let $S$ denote the subsemiring of $\mm_4(L)$, generated by the elements $v_1,v_2,u_1,u_2,\ldots, u_n, w_1,w_2, \ldots, w_m$.
 
 It can be easily seen, that since $L$ is entire and antinegative, we do not get any zero--divisors in $S$ that are not
 already amongst the generating elements. So, the zero--divisor graph of $S$ consists of edges $v_1 - v_2$, 
 $u_i - v_1$ for $i=1,2,\ldots,n$, and $w_j - v_2$ for $j=1,2,\ldots,m$, which implies that
 $\Gamma(S)=S_{n,m}$ is a two-star graph. \hfill$\square$
\end{Example}

\bigskip

We shall now see, that we can consider the case of cyclic zero--divisor graphs separately from the case of
acyclic ones. 
We will find all possible acyclic graphs that can be realized as zero--divisor graphs of semirings, 
and for the cyclic graphs, we shall prove that they always contain at least one cycle of length at most $4$.

\bigskip
\begin{Lemma}\label{thm:path5}
If $P_5$ is a subgraph of $\Gamma(S)$, where $S$ is an arbitrary semiring, then $\Gamma(S)$ is a 
cyclic graph and $\girth{\Gamma(S)} \leq 4$.
\end{Lemma}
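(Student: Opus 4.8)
The plan is to prove the stronger assertion that $\Gamma(S)$ contains a cycle of length $3$ or $4$; both conclusions of the lemma follow immediately. Fix a copy of $P_5$ in $\Gamma(S)$, say $x_1-x_2-x_3-x_4-x_5$, so that the five vertices are pairwise distinct and, for each $i\in\{1,2,3,4\}$, at least one of the products $x_ix_{i+1}$, $x_{i+1}x_i$ is zero. The first step is to dispose of the ``chord'' configurations: if $x_1-x_3$, $x_2-x_4$ or $x_3-x_5$ happens to be an edge of $\Gamma(S)$, then the corresponding triple spans a triangle, while if $x_1-x_4$ or $x_2-x_5$ is an edge, then $x_1-x_2-x_3-x_4-x_1$ (respectively $x_2-x_3-x_4-x_5-x_2$) is a $4$-cycle. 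Hence from now on we may assume that none of these five adjacencies holds, i.e.\ none of the corresponding products vanishes.

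The engine of the argument is the following. If we can exhibit an element $s\in S$, manufactured from $x_1,\ldots,x_5$, which is nonzero and is annihilated (from the left or from the right) by \emph{both} $x_3$ and $x_4$, then $s\in Z(S)^*$, and provided $s\notin\{x_3,x_4\}$ the triple $s,x_3,x_4$ spans a triangle. In the commutative case such an $s$ is at hand: put $s=x_2x_5$. Since $x_2-x_5$ is not an edge we have $s\neq 0$; moreover $sx_3=(x_2x_3)x_5=0$ and $sx_4=x_2(x_4x_5)=0$, so $s$ is adjacent to $x_3$ and to $x_4$. It remains to rule out $s=x_3$ and $s=x_4$, and this is where the forbidden chords re-enter: left-multiplying the corresponding identity by $x_1$ and using $x_1x_2=0$ yields $x_1x_3=(x_1x_2)x_5=0$ (so $x_1-x_2-x_3$ is a triangle), respectively $x_1x_4=0$ (so $x_1-x_2-x_3-x_4-x_1$ is a $4$-cycle). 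This finishes the commutative case.

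For an arbitrary, possibly noncommutative, semiring I would run the same scheme, but now the orientations of the four path-edges intervene. Since $\Gamma(S^{\mathrm{op}})=\Gamma(S)$ and since reversing the path $x_i\mapsto x_{6-i}$ is a symmetry, the $2^4$ orientation patterns collapse to a few cases. When $x_3$ is annihilated by $x_2$ and by $x_4$ from the same side, the right auxiliary element is $s=x_2+x_4$, which then satisfies $x_3s=0$ (or $sx_3=0$): here $s=0$ already forces $x_1x_4=0$ or $x_2x_5=0$ (a $4$-cycle), and the coincidences $s=x_i$ are eliminated by multiplying through and extracting one of the forbidden chords. When the two annihilations are on opposite sides, the right choice is one of the products $x_2x_5$, $x_5x_2$, $x_1x_4$, $x_4x_1$, with the two annihilation identities read on the appropriate sides, and the argument proceeds as above.

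I expect the genuinely delicate point --- and the main obstacle --- to be the degenerate coincidence $s=x_3$ (equivalently $x_2+x_4=x_3$, or $x_2x_5=x_3$, and so on) in those cases that survive the reduction: because addition in a semiring need not be cancellative, one cannot simply cancel to a contradiction, so once the relation $s=x_3$ is in hand one must perform a second round of the same construction (for instance, knowing $x_2+x_4=x_3$, pass to the product $x_2x_5$ and check that it is adjacent to both $x_3$ and $x_4$) and show that this branch, too, terminates in one of the forbidden chords. Organizing these secondary cases so that every branch ends in a $3$- or $4$-cycle is the bulk of the work.
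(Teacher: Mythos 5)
Your commutative argument is fine (the element $x_2x_5$ is adjacent to both $x_3$ and $x_4$, and the coincidences $x_2x_5\in\{x_1,x_2,x_3,x_4,x_5\}$ all collapse to one of the forbidden chords), but the lemma is stated for an \emph{arbitrary}, possibly noncommutative, semiring, and that is exactly the case your proposal does not prove: you describe a plan, and you yourself concede that ``organizing these secondary cases so that every branch ends in a $3$- or $4$-cycle is the bulk of the work.'' That remaining work is the actual content of the lemma, and the sketch as given has concrete holes. In the ``same side'' configuration, the element $s=x_2+x_4$ with $x_3s=0$ produces only the single adjacency $s-x_3$; your engine needs $s$ to be adjacent to \emph{two mutually adjacent} vertices, and nothing forces $s$ to be adjacent to $x_2$ or $x_4$ (computing $x_1s$ requires $x_1x_2=0$ \emph{and} $x_1x_4=0$, the latter being exactly a forbidden chord, so it is nonzero). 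Likewise your elimination of $s=0$ and of the coincidences $s=x_i$ silently assumes the first path edge is oriented the right way ($x_1x_2=0$ rather than only $x_2x_1=0$); in the opposite-side case with, say, $x_2x_3=0$ and $x_4x_5=0$, the correct auxiliary element is $x_5x_2$, and if $x_5x_2=x_3$ one can no longer multiply by $x_1$ on a useful side, so the degenerate branch does not close as in the commutative case. Since semirings lack both subtraction and additive cancellation, these branches cannot be waved away, and no complete case analysis is supplied.

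For comparison, the paper's proof avoids your product-element engine in the hard orientations. Writing the path as $a-b-c-d-e$ and assuming $\girth{\Gamma(S)}>4$, it first treats the one-sided cases ($ba=bc=0$ or $ab=cb=0$) by an absorption argument: $eb\neq 0$ kills $a$ and $c$, so $eb=b$, similarly $db=b$, and the orientation of the edge $d-e$ then forces $db=0$ or $eb=0$, a contradiction. In the mixed case $ab=bc=0$, $ba\neq0$, $cb\neq0$, it invokes Theorem \ref{thm:diam3} to get a path of length at most $3$ from $a$ to $e$ and analyzes the product $cx$ with the new neighbour $x$ of $a$. So the mechanisms you would still need (absorption identities like $eb=b$, and the use of connectivity/diameter to import a new vertex $x$) are precisely what is missing from your outline; as it stands, the proposal proves the lemma only for commutative semirings.
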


\medskip

\begin{proof}
 Denote by $a-b-c-d-e$ the path $P_5$ in $\Gamma(S)$. Suppose that $\girth{\Gamma(S)} > 4$,
 i.e., there are no edges among other vertices from this path. 
 
 Consider first the case $ba=bc=0$. Since $eb \ne 0$ and $(eb)a=(eb)c=0$, we have that $eb=b$. 
 (Otherwise, there is a cycle of length 3 or 4 in  $\Gamma(S)$.) Similarly, we conclude that $db=b$.
 Since $d-e$ is an edge in  $\Gamma(S)$, we have either that $de=0$, and thus $db=dbe=0$, or
 $ed=0$, and therefore $eb=edb=0$, which both contradict the asumption that  $\girth{\Gamma(S)} > 4$.
 
 Similarly, we can treat the case $ab=cb=0$.
 
 Suppose now $ab=bc=0$ and $ba\ne 0$, $cb \ne 0$. 
 By Theorem \ref{thm:diam3}, we have that $d(a,e)\leq 3$. Since
 $\girth{\Gamma(S)} > 4$, the path from $a$ to $e$ of the length at most 3 cannot contain any of 
 vertices $b$, $c$, $d$. 
 If $d(a,e)=3$ and $a-x-y-e$ is a path from $a$ to $e$, we obtain a cycle 
 $a-b-c-d-e-y-x-a$ of length 7. Note that if $d(a,e)=2$, then we can assume that $y=x$ and if $d(a,e)=1$,
 then $e=y=x$. In all three cases, let us assume, that $a-x$ is an edge in $\Gamma(S)$.
 If we assumed $ax=0$, we would get a contradiction as in the case $ba=bc=0$.
 Thus, from now, let $xa=0$. 
 Since $\girth{\Gamma(S)} > 4$ and $b(cx)=0$, the product $cx$ is either equal to $a$, $b$, $c$
 or is an element, different from $a,b,c,d,e,f,x,y$. In the first case, $a^2=(cx)a=0$ and therefore
 $a-ac-x-a$ is a 3-cycle in $\Gamma(S)$, a contradiction. In the second case, $ba=(cx)a=0$, which is 
 again a contradiction.  Otherwise, $b-cx-a-b$ is a cycle of length 3. 
\end{proof}

\bigskip
\begin{Corollary}\label{thm:cycle5}
The cycle on $n$ vertices, $n \geq 5$, cannot be realized as $\Gamma(S)$, where $S$ is a semiring.
\end{Corollary}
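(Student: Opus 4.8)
The plan is to derive this directly from Lemma \ref{thm:path5}. If $C_n$ denotes the cycle on $n \geq 5$ vertices, then $C_n$ contains a path on $5$ vertices as a subgraph: indeed, writing the cycle as $x_0 - x_1 - x_2 - x_3 - x_4 - \ldots - x_{n-1} - x_0$, the subpath $x_0 - x_1 - x_2 - x_3 - x_4$ uses five distinct vertices (here we use $n \geq 5$) and is isomorphic to $P_5$.

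Now suppose, for contradiction, that $\Gamma(S) = C_n$ for some semiring $S$ and some $n \geq 5$. Since $P_5$ is a subgraph of $C_n = \Gamma(S)$, Lemma \ref{thm:path5} applies and yields $\girth{\Gamma(S)} \leq 4$. But the girth of $C_n$ is exactly $n \geq 5$, a contradiction. (Alternatively, one may simply quote that Lemma \ref{thm:path5} forces $\Gamma(S)$ to be cyclic with a cycle of length at most $4$, whereas $C_n$ with $n \geq 5$ has no such short cycle.) Hence no such $S$ exists.

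There is essentially no obstacle here; the only thing to be careful about is the observation that $P_5$ genuinely embeds in $C_n$ as a subgraph whenever $n \geq 5$, which is immediate from the cyclic ordering of the vertices, and that the girth of $C_n$ is $n$, so that the conclusion $\girth{\Gamma(S)} \leq 4$ of Lemma \ref{thm:path5} is incompatible with $\Gamma(S) = C_n$ for $n \geq 5$. The statement for $n = 3$ and $n = 4$ is of course not claimed, consistent with the fact that $3$- and $4$-cycles do arise (and are studied later in the paper).
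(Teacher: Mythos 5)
Your proof is correct and is exactly the argument the paper intends: the corollary is stated as an immediate consequence of Lemma \ref{thm:path5}, obtained by noting that $C_n$ with $n\geq 5$ contains $P_5$, so the lemma would force a cycle of length at most $4$, contradicting that the girth of $C_n$ is $n$. Nothing further is needed.
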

\bigskip

We can now prove the theorem that generalizes \cite[Thm.~2.4, Thm.~2.5]{andmul07} and provides a characterization
of all acyclic zero--divisor graphs over semirings.

\bigskip

\begin{Theorem}\label{thm:characterization}
  Let $S$ be a non--entire semiring.  
  \begin{enumerate}
  \item[(a.)] If $\Gamma(S)$ is a cyclic graph,
      then $\diam{\Gamma(S)}\leq 3$ and $\girth{\Gamma(S)}\leq 4$.
  \item[(b.)] $\Gamma$ is an acyclic zero--divisor graph of  a non--entire semiring if and only if
      $\Gamma=S_{n,m}$ or $\Gamma=K_{1}$.
 \end{enumerate} 
\end{Theorem}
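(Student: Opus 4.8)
The plan is to deduce everything from results already in hand --- Theorem \ref{thm:diam3}, Lemma \ref{thm:path5}, and the constructions of Examples \ref{ex:K1n} and \ref{ex:K1nm} --- rather than to prove anything genuinely new. For part (a.), the bound $\diam{\Gamma(S)}\le 3$ is precisely Theorem \ref{thm:diam3}, so only the girth needs comment: if $\Gamma(S)$ is cyclic it contains a cycle $C_n$ for some $n\ge 3$, and when $n\le 4$ we already have $\girth{\Gamma(S)}\le 4$, while when $n\ge 5$ the cycle $C_n$ contains a path on five vertices, so $P_5$ is a subgraph of $\Gamma(S)$ and Lemma \ref{thm:path5} forces $\girth{\Gamma(S)}\le 4$. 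Either way $\girth{\Gamma(S)}\le 4$.

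For the ``if'' direction of part (b.) I would quote the existing realizations: Example \ref{ex:K1n} produces every star $K_{1,n}=S_{0,n-1}$ with $n\ge 1$ together with $K_1=K_{1,0}$, and Example \ref{ex:K1nm} produces every two--star $S_{n,m}$ with $n,m\ge 1$; since $S_{a,b}\cong S_{b,a}$, these jointly realize all the graphs $S_{n,m}$ with $n,m\ge 0$ and the graph $K_1$. Each witnessing semiring is non--entire, since its zero--divisor graph is nonempty, and each of the graphs $S_{n,m}$, $K_1$ is a tree, hence acyclic.

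For the ``only if'' direction, suppose $S$ is non--entire and $\Gamma(S)$ is acyclic. By Theorem \ref{thm:diam3} the graph $\Gamma(S)$ is connected, hence a tree, and $\diam{\Gamma(S)}\le 3$; it remains to invoke the elementary classification of trees of diameter at most $3$. If $\Gamma(S)$ has a single vertex it is $K_1$ (and a non--entire $S$ does have at least one nonzero zero--divisor, so this is the minimal case). Otherwise fix a longest path $P$ in $\Gamma(S)$, which has at most four vertices: if $P$ has two or three vertices then every remaining vertex must be a leaf attached to the centre of $P$ --- anything else would lengthen $P$ or create a cycle --- so $\Gamma(S)=K_{1,k}=S_{0,k-1}$; if $P=a-b-c-d$ has four vertices then every further vertex must be adjacent to $b$ or to $c$ (adjacency to $a$, to $d$, or to a vertex off $P$ would produce a path on five vertices) and not to both (which would create a cycle), so writing $u_1,\dots,u_m$ for the neighbours of $b$ other than $c$ and $w_1,\dots,w_n$ for the neighbours of $c$ other than $b$ identifies $\Gamma(S)$ with $S_{m,n}$. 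I do not expect a real obstacle here; the only points needing a moment's care are that Examples \ref{ex:K1n} and \ref{ex:K1nm} together exhaust $\{K_1\}\cup\{S_{n,m}\}$, and that this tree classification genuinely uses the diameter bound from Theorem \ref{thm:diam3} rather than just connectedness.
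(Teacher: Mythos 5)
Your proposal is correct and follows essentially the same route as the paper: part (a.) via Theorem \ref{thm:diam3} and Lemma \ref{thm:path5} applied to a cycle of length at least $5$, and part (b.) via Examples \ref{ex:K1n} and \ref{ex:K1nm} for realization plus the classification of the acyclic case. The only cosmetic difference is that in the ``only if'' direction you derive the star/two-star structure from the diameter bound of Theorem \ref{thm:diam3} and an explicit classification of trees of diameter at most $3$, whereas the paper excludes $P_5$ by the contrapositive of Lemma \ref{thm:path5} and leaves that (equivalent, for trees) classification implicit.
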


\medskip

\begin{proof}
If $\Gamma(S)$ is a cyclic graph which contains a cycle of length $5$ or more, then it also contains $P_5$.
By Lemma \ref{thm:path5}, it follows $\girth{\Gamma(S)}\leq 4$.  Note that $\diam{\Gamma(S)}\leq 3$ by Theorem \ref{thm:diam3}.
Assume now that $\Gamma(S)$ is acyclic and contains at least 2 vertices.  
Again by Lemma \ref{thm:path5}, we know that it does not contain
$P_5$, so the only possibility is that  $\Gamma(S)=S_{n,m}$ for some $n,m$.
The converse of (b.) follows from Examples \ref{ex:K1n} and \ref{ex:K1nm}.
\end{proof}

\bigskip

This result characterizes the acyclic zero--divisor graphs of (non)-\-commutative
semirings. In the following sections we will examine the cyclic zero--divisor graphs of commutative semirings.

\bigskip
\bigskip

\section{The complete $k$-partite and regular zero--divisor graphs of commutative semirings}

\bigskip

In this section we investigate two special families of cyclic graphs, complete $k$-partite and regular graphs.
 DeMeyer et al. \cite{dem10} showed that all complete 
$k$-partite graphs are zero--divisor graphs of commutative semigroups, and (see Theorem \ref{thm:dem10})  characterized all
 regular graphs that can appear as the zero--divisor graphs of commutative semigroups. In the semiring setting, these two
 assertions no longer hold, and  in Theorem \ref{thm:kpartite} and Corollary \ref{thm:regular} we shall characterize  complete
  $k$-partite and regular graphs that can appear as the zero--divisor graphs of commutative semirings.

\bigskip

\begin{Theorem}[DeMeyer, Greve, Sabbaghi, Wang \cite{dem10}] \label{thm:dem10}
 Let $\Gamma$ be a connected $k$-regular graph with $n$ vertices.
 Then $\Gamma$ is a zero--divisor graph of a commutative semigroup
  if and only if $n-k \vert n$ and  $\Gamma =\bigvee^{n/(n-k)} (n-k) \, K_1$.
\end{Theorem}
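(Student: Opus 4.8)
The plan is to handle the two implications separately. For sufficiency, suppose $(n-k)\mid n$ and write $s=n-k$ and $t=n/(n-k)$, so that the prescribed graph is $\bigvee^{t} s\,K_1=K_{s,\dots,s}$, the complete $t$--partite graph with all parts of size $s$. If $t=1$, then $s=1$ because $\Gamma$ is connected, and $S=\{0,x\}$ with $x^2=0$ realizes $K_1$. If $t\ge 2$, I would use a ``layered semilattice'': let $S=\{0\}\sqcup V_1\sqcup\dots\sqcup V_t$ with $V_i=\{(i,1),\dots,(i,s)\}$, declare $0$ absorbing, and set $(i,a)(i,b)=(i,\max\{a,b\})$ and $(i,a)(j,b)=0$ for $i\ne j$. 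Commutativity is immediate and associativity is a routine case check (the only products of three elements to consider are those with at least two factors in a common $V_i$). Since $t\ge 2$, every nonzero element annihilates some element of another part, so $Z(S)^*=\bigcup_i V_i$; as products inside a part are never $0$ and products across parts always are, we get $\Gamma(S)=K_{s,\dots,s}$. (For $t=2$ one may instead invoke $\Gamma(W_1\times W_2)=K_{|W_1|-1,|W_2|-1}$ for entire semigroups $W_i$, cf.\ Example~\ref{ex:Kmn}.)

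For necessity, let $\Gamma=\Gamma(S)$ be connected and $k$--regular on $n$ vertices, and define $x\approx y$ on $V=Z(S)^*$ by $\Ann_S(x)=\Ann_S(y)$. Two standard observations: all elements of a fixed $\approx$--class have the same ``square is $0$ or not'' status, so the class is a clique if that status is ``$0$'' and an independent set otherwise; and for distinct classes, either every cross pair is an edge of $\Gamma$ or none is. Hence $\Gamma$ is a blow--up of the compressed graph $\Gamma'$ on the $\approx$--classes, each vertex of $\Gamma'$ being expanded to a clique or to an independent set, and $\Gamma'$ is connected because $\Gamma$ is.

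The heart of the matter — and the step I expect to be the main obstacle — is to show that $\Gamma$ is \emph{complete multipartite}, equivalently that $\Gamma$ has no induced subgraph consisting of an edge $u-w$ together with a vertex $v$ adjacent to neither endpoint; that is, for every edge $u-w$ and every other vertex $v$, one has $uv=0$ or $wv=0$. The blow--up structure together with $k$--regularity does \emph{not} suffice for this, since there are connected regular graphs that are not complete multipartite (for instance $C_5$ or the $3$--cube), and these must be excluded using genuine semigroup input. The facts I would feed in are that each $\Ann_S(x)$ is an ideal of $S$ (so $\{0\}\cup N(x)$ is closed under multiplication) and that $\diam{\Gamma}\le 3$ by Theorem~\ref{thm:diam3}. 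Concretely, assuming $uw=0$ but $uv\ne 0\ne wv$, associativity gives $u(wv)=(uw)v=0$, $w(uv)=0$ and $(uv)(wv)=(uw)v^2=0$, which produces new nonzero vertices adjacent to $u$, to $w$ and to one another; I would then analyse this configuration, using regularity to force two vertices to have equal neighbourhood sizes and the diameter bound to forbid long detours, until a contradiction is reached. This delicate case analysis is essentially a reproof of the relevant portion of \cite{dem10}.

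Finally, once $\Gamma=K_{p_1,\dots,p_r}$ is known to be complete multipartite, a vertex in the $i$--th part has degree $n-p_i$, so $k$--regularity forces $n-p_i=k$ for every $i$. Thus all parts have the common size $n-k$, hence $(n-k)\mid n$ with $r=n/(n-k)$, and $\Gamma=\bigvee^{n/(n-k)}(n-k)\,K_1$, which is the claimed characterization.
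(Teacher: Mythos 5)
First, a point of comparison: the paper does not prove this statement at all — Theorem \ref{thm:dem10} is quoted from \cite{dem10} and used only as an ingredient in Corollary \ref{thm:regular} — so your attempt has to be judged on its own merits rather than against an internal proof.

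Your sufficiency half is fine: the layered semilattice is associative, commutative, has a zero, every nonzero element is a zero--divisor once there are at least two parts, and its zero--divisor graph is exactly the complete multipartite graph with equal parts; the degenerate case is handled, and the closing count (in a complete multipartite $k$-regular graph every part has size $n-k$, so $(n-k)\mid n$) is correct. The genuine gap is in the necessity half: the entire content of that direction is the claim that a connected regular zero--divisor graph of a commutative semigroup is complete multipartite, and at precisely that point you stop, saying the configuration ``must be analysed \dots until a contradiction is reached'' and that this is ``essentially a reproof of the relevant portion of \cite{dem10}''. A proof that defers its only nontrivial step to the paper being proved is not a proof, so as written the proposal does not establish the theorem.

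That said, the configuration you set up does close, and quickly — the diameter bound is not needed, only the annihilator trick plus regularity. Suppose $u,v,w$ are distinct vertices with $uw=0$ but $uv\neq 0\neq wv$. Then $x=uv$ is a vertex (it annihilates $w$), and $zx=0$ for every $z\in N(u)\cup N(v)$, so $N(x)\supseteq\bigl(N(u)\cup N(v)\bigr)\setminus\{x\}$. Since $w\in N(u)\setminus N(v)$ we have $|N(u)\cup N(v)|\geq k+1$, and $\deg(x)=k$ then forces $x\in N(u)\cup N(v)$, $|N(u)\cup N(v)|=k+1$ and $N(x)=\bigl(N(u)\cup N(v)\bigr)\setminus\{x\}$. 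Note $x\neq u,v$ because $u,v\notin N(u)\cup N(v)$ (no loops, and $uv\neq 0$); hence $v\notin N(x)$ gives $x\notin N(v)$, so $x$ lies in $N(u)\setminus N(v)$, which has exactly one element, namely $w$, whence $x=w$. But then $u\in N(w)=N(x)\subseteq N(u)\cup N(v)$, while $u\notin N(u)$ and $uv\neq 0$ — a contradiction. So no edge has a vertex nonadjacent to both of its endpoints, non-adjacency is transitive, $\Gamma$ is complete multipartite, and your final counting finishes the argument. In short: the missing step is genuinely needed and genuinely absent from your write-up, but it is a few lines along exactly the lines you had begun, not a delicate case analysis.
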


\bigskip

\begin{Theorem}\label{thm:kpartite}
 Let $\Gamma$ be a complete $k$-partite graph with $n$ vertices and $k\geq 2$.
 Then $\Gamma$ is a zero--divisor graph of a commutative semiring
  if and only if $k=2$ or $\Gamma=K_{k-1} \bigvee (n-k+1) \, K_1$.
\end{Theorem}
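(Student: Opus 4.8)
The plan is to prove the two implications separately. For the ``if'' part the case $k=2$ is exactly Example~\ref{ex:Kmn}, so the work is (i) to construct, for $k\ge 3$, a commutative semiring whose zero--divisor graph is $K_{k-1}\bigvee(n-k+1)\,K_1$, and (ii) to show conversely that whenever $\Gamma(S)$ is complete $k$-partite with $k\ge 3$, at most one of its parts can contain more than one vertex, so that necessarily $\Gamma(S)=K_{k-1}\bigvee(n-k+1)\,K_1$.

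For (ii), suppose toward a contradiction that two parts $V_1$ and $V_2$ of $\Gamma(S)$ each contain at least two vertices; choose distinct $a,a'\in V_1$, distinct $b,b'\in V_2$, and, using $k\ge 3$, a vertex $c$ in a third part. Products of elements in different parts vanish, so $ac=bc=ab=ab'=a'b=a'b'=0$, while the non--edges inside the parts give $aa'\ne 0$ and $bb'\ne 0$. I claim $a+b=0$. If not, then $(a+b)c=ac+bc=0$ with $c\ne 0$, so $a+b$ is a nonzero zero--divisor, hence a vertex of $\Gamma(S)$; moreover $(a+b)a'=aa'\ne 0$ and $a+b\ne a'$ (multiplying $a+b=a'$ by $b'$ would force $bb'=0$), so the vertex $a+b$ is non--adjacent to and distinct from $a'$, whence it lies in $V_1$; symmetrically $(a+b)b'=bb'\ne 0$ and $a+b\ne b'$ put $a+b$ into $V_2$ --- a contradiction. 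Thus $a+b=0$ and likewise $a'+b=0$, so by uniqueness of additive inverses in the commutative monoid $(S,+)$ we get $a=a'$, contradicting $a\ne a'$. Hence at most one part is nontrivial; the parts being nonempty, that part (if present) has $n-k+1$ vertices, so $\Gamma(S)=K_{k-1}\bigvee(n-k+1)\,K_1$.

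For (i), put $a=k-1$, $b=n-k+1$ and work with ``polynomials in a nilpotent of index three''. Fix a totally ordered (distributive) lattice $L$ with $\vert L\vert=\max\{k,b\}$, let $J=J_3=E_{1,2}+E_{2,3}\in\mm_3(L)$ (so $J^3=0$), and let $T=\{c_0+c_1J+c_2J^2:c_i\in L\}$, a commutative subsemiring of $\mm_3(L)$. Writing $v(p)$ for the least index of a nonzero coefficient of $p$, one checks that, over the entire and antinegative $L$, $pq=0$ iff $v(p)+v(q)\ge 3$; hence a nonzero $p$ is a zero--divisor iff $v(p)\ge 1$, the elements with $v=2$ form a clique, each such element is joined to every element with $v=1$, and the elements with $v=1$ form an independent set (their pairwise products have $v=2$, hence are nonzero). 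It remains to cut out a subsemiring $S\subseteq T$ with exactly $a$ elements of valuation $2$ and exactly $b$ of valuation $1$: take $S$ generated by $1$, by $\{cJ^2:c\in C\}$ with $C$ the top $k-1$ nonzero elements of $L$, and by $\{J+cJ^2:c\in D\}$ with $D$ the top $b$ elements of $L$. Then $\Gamma(S)=K_a\bigvee b\,K_1=K_{k-1}\bigvee(n-k+1)\,K_1$; taking $b=1$ recovers in particular the realization of $K_k$ from Example~\ref{ex:Kn}.

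The delicate point --- and the main obstacle --- is the verification in (i) that closing the chosen generators under $+$ and $\cdot$ introduces no new zero--divisor, e.g.\ no element of valuation $1$ whose $J$-coefficient is not the prescribed top element, and no element of valuation $2$ with coefficient outside $C$. The choices of $C$ and $D$ as \emph{top} subsets of the chain $L$ are designed for exactly this: since every subset of a chain is closed under $\min$ and $\max$, since $1+1=1$ and $x\cdot x=x$ in $L$, and since multiplication by $J$ only raises valuations, all sums and products of generators stay inside the intended vertex set (together with non--zero--divisors of valuation $0$). Making this bookkeeping airtight is the only real difficulty; the ``only if'' direction is comparatively soft.
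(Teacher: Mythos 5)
Your proof is correct. The ``only if'' half is essentially the paper's own argument: for $k\ge 3$ with two nontrivial parts you add one element from each and locate the sum; the paper folds the case $a_1+a_2=0$ into the same computation (if $a_1+a_2\notin C_1$ then $(a_1+a_2)b_1=0$ forces $a_1b_1=0$), whereas you treat $a+b=0$ separately via uniqueness of additive inverses in $(S,+)$ --- a harmless variation. The ``if'' half is where you genuinely diverge. The paper realizes $K_{k-1}\bigvee (n-k+1)\,K_1$ (for $3\le k\le n-1$) inside $\mm_{2n+1}(\BB)$ via an ad hoc family of $0$--$1$ matrices $A_i,B,C_j$ built from powers of $J_{2n+1}$, and relies on Example \ref{ex:Kn} for the complete-graph case $k=n$; you instead work in the truncated ``polynomial'' semiring $\{c_0I+c_1J+c_2J^2\}\subseteq\mm_3(L)$ over a finite chain $L$, where the valuation criterion $pq=0\iff v(p)+v(q)\ge 3$ (valid because a chain is entire and zerosumfree) makes the graph structure transparent, and the choice of $C$ and $D$ as up-sets of the chain makes closure under $+$ immediate; your construction also covers $k=n$ uniformly. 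The closure check you flag as delicate does go through: the only product of positive-valuation generators that is nonzero and not itself a generator is $(J+cJ^2)(J+c'J^2)=1_LJ^2$, and $1_L\in C$ since $k-1\ge 1$, while sums stay in the vertex set because $C$ and $D$ are closed upward under $\max$; it would be worth stating these two facts explicitly rather than appealing only to idempotency of $L$. Net effect: same skeleton, same realization of $K_{m,n}$ via Example \ref{ex:Kmn}, but a cleaner and more uniform realization of $K_{k-1}\bigvee(n-k+1)\,K_1$ than the paper's matrix construction.
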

\medskip
\begin{proof}
 Since $\Gamma$ is connected, we have $k\geq 2$. 
 Suppose $\Gamma=C_1 \bigvee C_2 \bigvee \ldots \bigvee C_k$ is a complete $k$-partite zero--divisor  graph with $k\geq 3$. If  $\Gamma\ne K_{k-1} \bigvee (n-k+1) \, K_1$, 
 then there exist, say $C_1$ and $C_2$ with $\vert C_1\vert \geq 2$ and  $\vert C_2 \vert \geq 2$.  Let $a_1,b_1\in C_1$, $a_2,b_2 \in C_2$ and $c\in C_3$. Since 
 $a_1c=a_2c=0$, it follows that $(a_1+a_2)c=0$, so $a_1+a_2 \in Z(S)$. If $a_1+a_2 \notin C_1$, then $(a_1+a_2)b_1=0$ and thus $a_1b_1=0$, a contradiction. Therefore, 
 $a_1+a_2 \in C_1$ and similarly we obtain  $a_1+a_2 \in C_2$ which is also a contradiction. 
 
  Example \ref{ex:Kmn} 
  shows that $K_{m,n}$ 
  can be realized as the zero--divisor graph of a commutative semiring.
  Choose an integer $k$, $3 \leq k \leq n-1$ and consider the subsemiring $S \subseteq \mm_{2n+1}(\BB)$, generated by 
  matrices $\{A_i, B, C_j; \; 2\leq i \leq n-k+1, \, 2n-k+2\leq j \leq 2n\}$,
  where $J=J_{2n+1}$, $B=J^{n}+J^{n+1}+\ldots+J^{2n}$, $C_j=J^{j}+J^{j+1}+\ldots+J^{2n}$ and
  $A_{i}=[a^i_{st}]$ are the matrices with entries
  $$a^i_{st}=\begin{cases}
   0, &t-s<n \, \text{ or } \, s=i,\,  t=i+n,\\
   1, & \text{otherwise.}
  \end{cases}$$
  Observe that $S$ is a semiring with $\Gamma(S)=\{C_1,C_2,\ldots,C_k\} \bigvee \{A_1,A_2,\ldots,A_{n-k+1},B\}=K_{k-1} \bigvee (n-k+1) \, K_1$.
\end{proof}

\bigskip

\begin{Corollary}\label{thm:regular}
 Let $\Gamma$ be a $r$-regular graph with $n$ vertices.
 Then $\Gamma$ is a zero--divisor graph of a commutative semiring if and only if 
 $r=n-1$ and $\Gamma=K_n$, or 
 $n$ even,  $r=\frac{n}{2}$ and $\Gamma=K_{\frac{n}{2},\frac{n}{2}}$.
\end{Corollary}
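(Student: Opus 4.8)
The plan is to derive Corollary~\ref{thm:regular} directly from Theorem~\ref{thm:kpartite} together with the structural results already at hand. First I would observe that a regular zero--divisor graph is in particular a cyclic graph unless it is a single vertex or empty: a nontrivial connected acyclic graph is a tree, and by Theorem~\ref{thm:characterization}(b.) the only acyclic zero--divisor graphs are $S_{n,m}$ and $K_1$; among these only $K_1$ is regular (the two--star graphs $S_{n,m}$ have a vertex of degree $1$ adjacent to a vertex of higher degree whenever they have more than one edge, and $K_{1,n}$ with $n\geq 2$ is not regular either), but $K_1$ has no edges and so does not arise as $\Gamma(S)$ for a non-entire $S$ with a nonempty graph. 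So I may assume $\Gamma$ is a regular \emph{cyclic} zero--divisor graph of a commutative semiring.

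Next I would use the known fact (from the ring/semigroup literature surveyed in Section~3, or proved directly) that a complete multipartite graph is exactly a graph whose complement is a disjoint union of cliques, equivalently a graph with no induced $P_3$-complement obstruction; concretely, I would argue that a \emph{regular} zero--divisor graph of a commutative semiring is automatically complete $k$-partite. The point: partition $V(\Gamma)$ by the relation $x\sim y$ iff $x$ and $y$ have the same closed neighborhood. One checks using the semiring multiplication — as in the proof of Theorem~\ref{thm:kpartite}, where sums $a_1+a_2$ of zero--divisors annihilated by a common element are again zero--divisors — that in a zero--divisor graph this relation yields independent sets that are pairwise completely joined, i.e. $\Gamma$ is complete multipartite, \emph{provided} the girth condition rules out the pathological non-multipartite cyclic graphs; here I would lean on Lemma~\ref{thm:path5} and Corollary~\ref{thm:cycle5} to exclude long induced paths and cycles. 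Once $\Gamma$ is complete $k$-partite and $r$-regular with $n$ vertices, regularity forces all parts to have the same size $s=n/k$, and each vertex has degree $r=n-s$.

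Now I would feed this into Theorem~\ref{thm:kpartite}. By that theorem, either $k=2$, or $\Gamma = K_{k-1}\bigvee(n-k+1)K_1$. In the latter case regularity forces $n-k+1=1$, i.e. $\Gamma=K_{k-1}\bigvee K_1 = K_k$, which is the $(n-1)$-regular complete graph $K_n$ (and Example~\ref{ex:Kn} exhibits a commutative semiring realizing $K_n$ for $n\geq 3$; for small $n$ one checks $K_2$ directly, e.g. via $\BB\times\BB$, and $K_1$ as noted is excluded). In the case $k=2$ we have $\Gamma=K_{s,s}$ with $2s=n$, so $n$ is even, $r=s=n/2$, and $\Gamma=K_{n/2,n/2}$; Example~\ref{ex:Kmn} shows this is realizable as $\Gamma(S\times T)$ for entire semirings $S,T$ with $|S|=|T|=n/2+1$. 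Conversely, both $K_n$ and $K_{n/2,n/2}$ are manifestly regular, so the two families listed are exactly the regular zero--divisor graphs of commutative semirings.

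I expect the main obstacle to be the step asserting that a regular zero--divisor graph of a commutative semiring is necessarily complete multipartite — this is where the additive structure of the semiring, the girth bounds from Lemma~\ref{thm:path5}, and the complete $k$-partite analysis of Theorem~\ref{thm:kpartite} have to be combined carefully. In particular one must make sure that no regular cyclic zero--divisor graph sneaks in that is \emph{not} complete multipartite (the complete $k$-partite realizability theorem only constrains graphs already known to be complete multipartite). The cleanest route may be to show directly: if $\Gamma$ is $r$-regular and cyclic, then for any vertex $x$ the set of vertices nonadjacent to $x$ (including $x$) forms an independent set, using that $x\cdot y\neq 0$, $x\cdot z\neq 0$ together with annihilator arguments on $x+(\text{common annihilators})$ force a contradiction with regularity unless the nonadjacency relation is transitive; then complete multipartiteness is immediate and the rest is bookkeeping with Theorem~\ref{thm:kpartite}.
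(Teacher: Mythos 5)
The decisive step in your argument --- that an $r$-regular zero--divisor graph of a commutative semiring must be complete multipartite (with all parts of equal size) --- is exactly the point you leave open, and the sketch you offer for it does not work as stated. Leaning on Lemma \ref{thm:path5} and Corollary \ref{thm:cycle5} only excludes long induced paths and long cycles; there are regular graphs of girth $3$ or $4$ and diameter at most $3$ that are not complete multipartite (the triangular prism and the $3$-cube are $3$-regular examples), so girth and diameter constraints alone cannot force the nonadjacency relation to be transitive. Your proposed ``annihilator arguments on $x+(\text{common annihilators})$'' are a restatement of the desired transitivity rather than a proof of it, and you yourself flag this as the main obstacle. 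The paper avoids the issue entirely by observing that the zero--divisor graph of a commutative semiring depends only on the multiplicative structure, hence is also the zero--divisor graph of a commutative semigroup with zero; Theorem \ref{thm:dem10} (DeMeyer--Greve--Sabbaghi--Wang) then says at once that a connected $r$-regular such graph is the join of $n/(n-r)$ copies of $(n-r)\,K_1$, i.e.\ complete multipartite with equal parts. Connectedness comes from Theorem \ref{thm:diam3}, and no separate discussion of the acyclic case is needed.

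Once complete multipartiteness is granted, your bookkeeping with Theorem \ref{thm:kpartite} matches the paper: in the case $\Gamma=K_{k-1}\bigvee(n-k+1)\,K_1$ regularity forces the single large part to have size $1$, giving $K_n$ with $r=n-1$, while $k=2$ together with regularity gives $K_{n/2,n/2}$ with $n$ even and $r=n/2$; realizability via Examples \ref{ex:Kmn} and \ref{ex:Kn} is also the paper's argument. So the proposal is sound in its second half, but to close the gap you must either invoke Theorem \ref{thm:dem10} (the intended route) or supply a genuine direct proof that regularity plus the semiring annihilator structure forces equal-sized parts and complete multipartiteness --- which is essentially reproving that semigroup theorem.
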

\medskip
\begin{proof}
 Assume that $\Gamma$ is a zero--divisor graph of a semiring. Then, $\Gamma$ is
 connected by Theorem \ref{thm:diam3} and thus by \ref{thm:dem10}, $\Gamma$ is a join 
 of $\frac{n}{n-r}$ copies of $(n-r)\, K_1$, which is a complete $\frac{n}{n-r}$-partite graph.
 Now, Theorem \ref{thm:kpartite} implies that there are two possibilities.
 In the first case, $\frac{n}{n-r}=2$ and thus $\Gamma$ is a $r$-regular bipartite graph with 
 $r=\frac{n}{2}$, so $\Gamma=K_{\frac{n}{2},\frac{n}{2}}$.
 In the second case, $\Gamma=K_{\frac{r}{n-r}} \bigvee (n-\frac{r}{n-r}) \, K_1$. Since $\Gamma$
 is $r$-regular, it follows that $n-1=\frac{r}{n-r}$ and therefore $r=n-1$, so $\Gamma=K_n$.
 
 Examples \ref{ex:Kmn} and \ref{ex:Kn} show that  $K_n$ and $K_{\frac{n}{2},\frac{n}{2}}$
 can both be realized as the zero--divisor graphs of commutative semirings.
\end{proof}

\bigskip
\bigskip

\section{The cyclic zero--divisor graphs}

\bigskip

In this section we will study the cyclic zero--divisor graphs of commutative semirings.  By Theorem 
\ref{thm:characterization}  every cyclic zero--divisor graph has a 3-cycle or a 4-cycle.
 We will define the following graphs, which we shall prove are the graphs that cannot appear as
the induced subgraphs of a cyclic zero--divisor graph of a commutative semiring.
\begin{itemize}
  \item $C_{4,4}$, which is a graph consisting of two cycles  $a-b-c-d-a$ and $a-b-f-e-a$ with the common edge 
  $a-b$, and
 \item $C_4'$, which is a 4-cycle $a-b-c-d-a$ together with two vertices $e$ and $f$, connected
   with edges $a-e$ and $b-f$,  
 \item $C_4^{''}$, which is a 4-cycle $a-b-c-d-a$ together with two vertices $e$ and  
   $f$, connected with edges $a-e$ and $c-f$,
\end{itemize}

		\begin{figure}[h]
			\centering
			\begin{tikzpicture}[style=thick, scale=0.75]
				\draw (1,1) -- (-1,-0.5) -- (1,-0.5)  -- (-1,1) -- (1,1) -- (0,2) -- (-1,1);
				\draw[fill=white] (0,2) circle (1mm)  (-1,1) circle (1mm) (-1,-0.5) circle (1mm) (1,-0.5) circle (1mm) (1,1) circle (1mm);
				\draw (-1,1) node[anchor=south east]{$a$};
				\draw (1,1) node[anchor=south west]{$b$};
				\draw (-1,-0.5) node[anchor=north east]{$c$};
				\draw (1,-0.5) node[anchor=north west]{$d$};
				\draw (0,2.2) node[anchor=south]{$e$};
				\draw (-3,0.2) node[anchor=west]{$C_{4,3}:$};
			\end{tikzpicture}
		\end{figure}

Moreover, let us define the graph $C_{4,3}$, which is a graph consisting of a 4-cycle  $a-b-c-d-a$ and a
3-cycle $a-b-e-a$ with the common edge $a-b$.

\bigskip

We can now state the following lemma.
\bigskip

\begin{Lemma}\label{thm:girth=3}
  Let $S$ be a commutative semiring and let the graph $\Gamma(S)$  contain exactly one
  $3$-cycle and at least one $n$-cycle, $n \geq 4$. Then, $\Gamma(S)$ contains $C_{4,3}$
  as an induced subgraph.  
\end{Lemma}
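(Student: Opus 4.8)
The plan is to start from an $n$-cycle with $n\ge 4$ in $\Gamma(S)$ and show that, together with the (unique) $3$-cycle, it forces a copy of $C_{4,3}$. The first reduction is to obtain a $4$-cycle: if $n\ge 5$, then the $n$-cycle contains $P_5$ as a subgraph, so by Lemma~\ref{thm:path5} we would have $\girth{\Gamma(S)}\le 4$, and since the graph has a triangle its girth is exactly $3$; but more usefully, I will argue directly that the presence of a long cycle plus a triangle always produces a $4$-cycle sharing an edge with the triangle. So first I would fix the $3$-cycle $x-y-z-x$ and an $n$-cycle $C$ with $n\ge 4$, and aim to produce a $4$-cycle that shares exactly the edge $x-y$ (or some edge of the triangle) with the triangle, with the fourth vertex not equal to $z$ and not adjacent to both $x$ and $y$ other than through the cycle.

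The core of the argument will be multiplicative manipulation in $S$, mimicking the style of Lemma~\ref{thm:path5} and Theorem~\ref{thm:kpartite}: whenever we have two zero--divisor relations $au=0$ and $bu=0$ with $a,b$ in the triangle and $u$ some vertex, we get $(a+b)u=0$, so $a+b\in Z(S)$, and then by analyzing which vertex class $a+b$ falls into (using that $\Gamma(S)$ has a \emph{unique} triangle, so $a+b$ cannot create a second one) we derive that $a+b$ equals one of $a$, $b$ or a new vertex, and track the resulting adjacencies. The key step is to locate a vertex $w$ adjacent to (say) $x$ but not to $y$ — such a $w$ must exist because otherwise every neighbor of $x$ is a neighbor of $y$, and combined with connectivity and the existence of the $n$-cycle one can push this to a contradiction or directly to the desired configuration; then using $w x = 0$ and the triangle relations $xy=xz=0$, form sums like $w+z$ or $w+y$ and show $y(w+z)=0$ forces a $4$-cycle $x-y-?-w-x$ through a new vertex, giving $C_{4,3}$ on $\{x,y,z,w,\text{new}\}$ once we check the new vertex is distinct from $z$ and non-adjacent to $x$ in a way that would collapse it.

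The main obstacle I anticipate is the bookkeeping of degenerate cases: when sums of vertices coincide with existing vertices, when the $4$-cycle produced happens to share more than one edge with the triangle (which would actually create a second triangle, contradicting uniqueness, so these cases should be \emph{excludable} — but one must verify that carefully), and when the vertex completing the would-be $4$-cycle turns out to be $z$ itself, in which case one must re-route the argument through a different edge of the triangle or a different neighbor. I would organize the proof by first establishing the existence of a $4$-cycle in $\Gamma(S)$ (from the $n$-cycle, $n\ge 4$, using Lemma~\ref{thm:path5} and Corollary~\ref{thm:cycle5} to handle $n\ge 5$, and taking $n=4$ directly), then showing that \emph{some} $4$-cycle must meet the triangle in exactly one edge — here I'd use that the whole graph is connected (Theorem~\ref{thm:diam3}) together with the uniqueness of the triangle to prevent the $4$-cycle from being edge-disjoint from the triangle in a way that cannot be bridged, and the additive closure trick to actually build the shared edge — and finally reading off the induced subgraph $C_{4,3}$, checking there are no extra edges among the five vertices precisely because a sixth edge would yield a second $3$-cycle.

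Concretely, the skeleton I would write is: (1) reduce to the case of a $4$-cycle $p-q-r-s-p$ disjoint from or overlapping the triangle $x-y-z-x$; (2) if they already share an edge, verify the induced subgraph is exactly $C_{4,3}$ (no chords, else a second triangle); (3) if they share one vertex or are disjoint, use a shortest connecting path (length $\le 3$ by Theorem~\ref{thm:diam3}) and the relations $(\,\cdot\,)(\,\cdot\,)=0$ along it, together with forming sums of adjacent-to-common-vertex elements, to slide a cycle onto an edge of the triangle; (4) throughout, invoke "exactly one $3$-cycle" to kill the cases where a sum-of-vertices identity would produce a triangle, which is what makes the surviving configuration be $C_{4,3}$ rather than something with an extra triangle.
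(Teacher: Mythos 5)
Your outline has the right overall shape (produce a $4$-cycle, merge it with the triangle, and use uniqueness of the $3$-cycle to exclude chords among the five vertices), but the two steps that carry all the weight are not actually carried out, and the tools you name for them do not suffice. First, the reduction to a $4$-cycle: Lemma~\ref{thm:path5} and Corollary~\ref{thm:cycle5} give you nothing here, because the graph already contains a triangle, so $\girth{\Gamma(S)}\le 4$ holds trivially and says nothing about the existence of a $4$-cycle, while Corollary~\ref{thm:cycle5} concerns the whole graph being a cycle, not subgraphs. The paper's argument is genuinely multiplicative: take a cycle $x_1-x_2-\cdots-x_n-x_1$ of minimal length $n\ge 4$, observe that $x_1x_3\ne 0$ (otherwise one gets a shorter cycle, and for $n=4$ a second triangle), and then use the element $x_1x_3$, which is annihilated by $x_2$, $x_4$ and $x_n$, to manufacture a $4$-cycle whether or not $x_1x_3$ lies on the original cycle. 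Nothing in your plan replaces this step.

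Second, and more seriously, the mechanism you propose for merging the $4$-cycle with the triangle is additive: from $au=0$ and $bu=0$ you pass to $(a+b)u=0$. A sum only inherits the \emph{common} annihilators of its summands, and a priori the triangle and the $4$-cycle share no neighbours, so this gives no leverage; for instance your claimed relation $y(w+z)=0$ reduces (using $yz=0$) to $yw=0$, which is precisely what you do not know. What works, and is what the paper does, is the product trick: for a triangle vertex $e$ and a non-adjacent $4$-cycle vertex $a$, the product $ae\ne 0$ is annihilated by every neighbour of $a$ \emph{and} every neighbour of $e$; from $(ae)f=(ae)g=0$ and uniqueness of the triangle one gets $ae\in\{e,f,g\}$, while $(ae)b=(ae)d=0$ gives $ae$ two neighbours on the $4$-cycle, forcing a common vertex, and a second application of the same trick (to a product such as $ae$ with $d=g$) forces the common edge and hence $C_{4,3}$. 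Your step (2), that any chord among the five vertices would create a second triangle, is fine, but without (a) an actual construction of the $4$-cycle and (b) the product argument in place of the sum argument, the proposal does not yet prove the lemma.
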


\medskip
\begin{proof}
Let $n\ge 4$ be the smallest integer, such that $\Gamma(S)$ contains an $n$-cycle 
$x_1-x_2-x_3-\ldots-x_n-x_1$. If $x_1x_3= 0$, we obtain a $n-1$ cycle in the graph 
$\Gamma(S)$ and thus $n=4$ and $\Gamma$ has two 3-cycles, a contradiction.
Since $(x_1x_3)x_2=(x_1x_3)x_4=(x_1x_3)x_n=0$, either $x_1x_3$ is a vertex on the cycle
or $x_1x_3\ne x_i$ for all $i$. In both cases $\Gamma(S)$ contains a 4-cycle.

 Suppose $\Gamma(S)$ contains a $4$-cycle $a-b-c-d-a$ and a 3-cycle $e-f-g-e$.
 We shall firstly prove that they have a common vertex. 

 Choose an arbitrary vertex in the 3-cycle, say $e$. If $e$  is a neighbour of at least 2 vertices from the
 4-cycle (say, one of them is $a$), then note that either $\Gamma(S)$ contains more than one 3-cycle (which contradicts the assumption),  or the only other neighbour of $e$ in the 4-cycle is $c$. In the latter case 
 $\Gamma(S)$ contains a 4-cycle (either $a-e-c-b-a$ or $a-e-c-d-a$) and the 3-cycle $e-f-g-e$ with the 
 common vertex $e$.
 So, suppose every vertex in the 3-cycle $e-f-g-e$ has at most one neighbour in the 4-cycle. In this case, there
 exists a vertex in the 4-cycle, say $a$, such that $ae\ne 0$. Since $(ae)f=(ae)g=0$ and $\Gamma(S)$ has
 only one 3-cycle, it follows that $ae$ is an element of $\{e,f,g\}$. On the other hand, $(ae)b=(ae)d=0$,
 so $ae$ has at least 2 neighbours in the 4-cycle. It follows that $ae=c$ and 4-cycle and 3-cycle have a
 common vertex. 
 
 We proved  that 3-cycle and 4-cycle have a common vertex, for instance $d=g$. If $a = e$, then
 the Lemma is proven. Otherwise, since the graph contains only one 3-cycle,  $ae\ne 0$ and 
 $(ae)d=(ae)f= 0$ imples that $ae$ is an element of $\{d,e,f\}$. Moreover, $(ae)b=0$ and thus $\Gamma(S)$
 contains $C_{4,3}$, since $\Gamma(S)$ may contain only one 3-cycle. 
\end{proof}

\bigskip

\begin{Lemma}\label{thm:induced=4}
  Let $S$ be a commutative semiring with $\girth{\Gamma(S)}=4$. Then $C_4'$  cannot appear as an induced   subgraph of $\Gamma(S)$.
\end{Lemma}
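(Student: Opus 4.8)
The plan is to argue by contradiction: suppose $\Gamma(S)$ contains an induced copy of $C_4'$, i.e.\ a $4$-cycle $a-b-c-d-a$ together with vertices $e,f$ and edges $a-e$, $b-f$, with no other edges among $\{a,b,c,d,e,f\}$. In particular $ae=0$, $bf=0$, $ab=bc=cd=da=0$, and all the non-edges give nonzero products: $ac\neq 0$, $bd\neq 0$, $af\neq 0$, $be\neq 0$, $ef\neq 0$, $ce\neq 0$, $cf\neq 0$, $de\neq 0$, $df\neq 0$. Since $\girth{\Gamma(S)}=4$ there are in particular no $3$-cycles anywhere in $\Gamma(S)$, which is the fact I will exploit repeatedly: whenever a product of two vertices is nonzero and also a vertex of the graph, I get edges that must not close up a triangle.

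First I would look at the product $ce$. We have $(ce)a = c(ea) = 0$ and $(ce)b = e(cb) = 0$, so $ce$ is a common neighbour of $a$ and $b$; if $ce \notin\{a,b\}$ this produces a vertex adjacent to both $a$ and $b$, but $a-b$ is a non-edge of the $4$-cycle, so this common neighbour $v=ce$ together with $a,b$ — no triangle yet, but $v$ adjacent to $a$ and to $b$; combined with the edge $a-e$ (so $v=ce$, is $v$ adjacent to $e$? $(ce)e = ce^2$, not obviously $0$) — here I would instead push toward a $3$-cycle via: $ce$ kills $a$, and $e$ kills $a$, so $ce$ and $e$ are both neighbours of $a$; moreover $(ce)e\cdot$... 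Let me reorganize: the cleanest route is to compute $c\cdot(be)$. Since $b-f$ is an edge, consider $f(ce)=(cf)e$ versus $f(be)=b(fe)\neq 0$ etc. The key computation I expect to drive the proof is the following: since $ac\neq 0$ and $(ac)b=(ac)d=0$, the element $ac$ is a common neighbour of $b$ and $d$; as $b-d$ is a non-edge, $ac$ is a genuine vertex adjacent to both. Similarly $bd$ is adjacent to both $a$ and $c$. Now multiply: I want to show $ac$ and $bd$, or $ac$ and $e$, or $ac$ and $f$, are forced to be adjacent in a way that closes a triangle.

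The decisive step, which I expect to be the main obstacle, is to show that the two "extra" vertices $e$ and $f$ cannot coexist with the $4$-cycle without creating either a second meeting pattern that forces a $3$-cycle or a chord of the $4$-cycle. Concretely I would argue: $(ac)\cdot e$: we have $e\cdot a = 0$, so $(ac)e = (ae)c = 0$; thus $ac$ is also adjacent to $e$ (if $ac\neq e$; the case $ac=e$ must be handled separately — but $ac$ is adjacent to $b$ and $e$ is not, so $ac\neq e$, and likewise $ac\neq f$ since $ac$ is adjacent to $d$... wait, is $f$ adjacent to $d$? $df\neq 0$, so no, hence $ac\neq f$). So $ac$ is adjacent to $b$, $d$, and $e$. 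Now look at the triangle hunt among $b,e,ac$: $b-ac$ is an edge, $e-ac$ is an edge; if $b-e$ were an edge we'd have a $3$-cycle — but $be\neq 0$, so indeed $b-e$ is a non-edge, no triangle yet. Push once more: consider $f\cdot(ac)$. Since $b-f$ edge and $(ac)b=0$... hmm $f(ac)=(fa)c$ and $fa\neq 0$. Instead consider $(ac)(bd)$: $(ac)(bd) = (ab)(cd)=0$, so $ac$ and $bd$ are adjacent (both are vertices, and they're distinct since e.g.\ $ac$ is adjacent to $e$ while... I'd need $bd$ not adjacent to $e$: $(bd)e = (be)d \neq 0$?? not clear). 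The honest plan: systematically show each of $ac$, $bd$ is a new vertex, establish its adjacencies to $\{a,b,c,d,e,f\}$ using the entrywise-zero products above, and then among the enlarged vertex set locate two adjacent vertices with a common neighbour — forcing a $3$-cycle, contradicting $\girth{\Gamma(S)}=4$. The obstacle is bookkeeping: ruling out all coincidences (like $ac=bd$, $ac=$ one of the original six, etc.) and checking that in every case a triangle appears; I would organize this as a short case analysis on whether $ac$, $bd$, $ce$, $cf$, $de$, $df$ land back inside $\{a,b,c,d,e,f\}$ or not, showing a $3$-cycle in each case. Once a $3$-cycle is produced in all cases, the contradiction with $\girth{\Gamma(S)}=4$ completes the proof.
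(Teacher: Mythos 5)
There is a genuine gap: what you have written is a plan, not a proof. The decisive step --- ruling out all the coincidences and exhibiting a $3$-cycle in every case --- is exactly the part you defer (``the obstacle is bookkeeping \dots''), and it is never carried out. Worse, a concrete slip derails the one line that would have finished quickly: when you compute $(ce)a=(ce)b=0$ you then say ``but $a-b$ is a non-edge of the $4$-cycle'' and abandon the computation. In fact $a-b$ \emph{is} an edge of the $4$-cycle $a-b-c-d-a$ (you yourself listed $ab=0$ at the start), so any vertex distinct from $a,b$ that annihilates both $a$ and $b$ closes a triangle $a-b-v-a$ and contradicts $\girth{\Gamma(S)}=4$ on the spot. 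Missing this, you drift to the products $ac$ and $bd$; but those are common annihilators of the \emph{non-adjacent} pairs $\{b,d\}$ and $\{a,c\}$, so they can never force a triangle directly --- which is precisely why that branch stalls and you end with $(ac)(bd)=0$ and no contradiction.

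The paper's proof runs the idea you circle around, but on the right products and in the right order. First take $ef$: it is nonzero (the subgraph is induced), and $(ef)a=(ea)f=0$, $(ef)b=e(fb)=0$; since $a-b$ is an edge and there are no triangles, $ef\in\{a,b\}$, and by symmetry one may assume $ef=a$. Then take $fd$: it is nonzero ($d-f$ is a non-edge), and $e(fd)=(ef)d=ad=0$, $a(fd)=(ad)f=0$, $b(fd)=(bf)d=0$, $c(fd)=(cd)f=0$. So $fd$ is a nonzero zero--divisor annihilating $a,b,c,e$; it cannot equal any of $a,b,c,d,e,f$ (each identification kills one of the nonzero products $ac$, $be$, $de$, $af$ coming from induced non-edges), and as a new vertex adjacent to both ends of the edge $a-b$ it creates a triangle --- contradiction. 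If you want to salvage your own route, you must at minimum (i) use that $a-b$ is an edge to force $ce\in\{a,b\}$ (the case $ce=b$ dies because $(ce)d=(cd)e=0$ would give $bd=0$), and (ii) still handle the residual case $ce=a$, which again needs an auxiliary product such as $ef$ and $fd$; so the complete argument essentially collapses back to the paper's two-step computation.
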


\medskip
\begin{proof}
 Suppose that $\Gamma(S)$  contains $C_4'$, which is a 4-cycle $a-b-c-d-a$ together with two vertices $e$ 
   and $f$, connected with edges $a-e$ and $b-f$. 
   Firstly,  $ef\ne0$ and $(ef)a=(ef)b=0$. Again, since $\girth{\Gamma(S)}> 3$, it follows that either 
   $ef=a$ or $ef=b$. By the symmetry, we can assume that $ef=a$. 
   Now, $e(fd)=(ef)d=0$ and moreover $a(fd)=b(fd)=c(fd)=0$. Since $fd \ne 0$ and
    $\girth{\Gamma(S)}> 3$,  the product $fd$ cannot exist as a vertex  in $\Gamma(S)$.
 \end{proof}
\bigskip

\begin{Lemma}\label{thm:induced=3}
  Let $S$ be a commutative semiring with $\Gamma(S)$ containing at most one
  3-cycle. Then neither  $C''_4$,
   $C_{4,4}$, nor $C_{4,5}$  can appear as induced subgraphs of $\Gamma(S)$.
\end{Lemma}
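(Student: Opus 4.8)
The plan is to prove all three non\-existence statements at once by contradiction, using a single device which already appears in Lemmas~\ref{thm:induced=4} and~\ref{thm:girth=3}. That device is the elementary observation: if $x\neq y$ are non\-adjacent vertices of $\Gamma(S)$, then $xy\neq 0$ (otherwise $x-y$ would be an edge), and $(xy)w=0$ for every vertex $w$ of $\Gamma(S)$ adjacent to $x$ or to $y$ (if, say, $xw=0$, then $(xy)w=(xw)y=0$ by commutativity). Hence, if $G\in\{C_4'',\,C_{4,4},\,C_{4,5}\}$ is an induced subgraph of $\Gamma(S)$ and $u,v\in V(G)$ are non\-adjacent, then $uv\neq 0$ and $uv$ annihilates every vertex of $M:=N_G(u)\cup N_G(v)$. (Here I read $C_{4,5}$, in analogy with $C_{4,3}$ and $C_{4,4}$, as a $4$\-cycle and a $5$\-cycle with a common edge.)

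First I would, for each of the three graphs, single out a non\-adjacent pair $u,v\in V(G)$ such that (i) the subgraph of $G$ induced on $M$ contains at least two distinct edges, and (ii) $M\not\subseteq N_G[w]$ for every $w\in V(G)$. The natural choice is the pair of vertices at distance $3$ lying outside the common part of the picture: in $C_{4,4}$ (cycles $a-b-c-d-a$ and $a-b-f-e-a$) take $u=d$, $v=f$, so $M=\{a,b,c,e\}$; in $C_4''$ (the $4$\-cycle $a-b-c-d-a$ with pendants $e$ at $a$ and $f$ at $c$) take $u=e$, $v=c$, so $M=\{a,b,d,f\}$; in $C_{4,5}$ (the $4$\-cycle $a-b-c-d-a$ together with a $5$\-cycle $a-b-p-q-r-a$ sharing the edge $a-b$) take $u=d$, $v=p$, so $M=\{a,b,c,q\}$. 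In each case one reads off two edges inside $M$ (for instance $a-b$ together with one more), and one checks by inspection that no closed neighbourhood $N_G[w]$ swallows all of $M$.

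With such a pair fixed, I would split into two cases. If $uv\notin V(G)$: since $uv\neq 0$ and $uv\cdot m=0$ for the nonzero vertex $m\in M$ (which is distinct from $uv$), the element $uv$ is a vertex of $\Gamma(S)$, and it is adjacent there to every vertex of $M$; choosing two distinct edges $x-y$ and $x'-y'$ of $G$ inside $M$ by (i) yields two distinct triangles $uv-x-y-uv$ and $uv-x'-y'-uv$ in $\Gamma(S)$, contradicting that $\Gamma(S)$ contains at most one $3$\-cycle. If $uv=w$ for some $w\in V(G)$: by (ii) pick $w'\in M$ with $w'\neq w$ and $w'\notin N_G(w)$; since $G$ is an induced subgraph, $w-w'$ is a non\-edge of $\Gamma(S)$, so $ww'\neq 0$, whereas $ww'=(uv)w'=0$ because $w'\in M$ — a contradiction. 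Either way the assumed induced copy of $G$ cannot exist.

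The only genuine work is the finite bookkeeping in the second paragraph: verifying (i) and (ii) for the concrete labellings of $C_4''$, $C_{4,4}$ and $C_{4,5}$ (each has at most seven vertices, so this is short). I do not expect any obstacle beyond this. Conceptually it is exactly the argument of Lemma~\ref{thm:induced=4}, except that, lacking the hypothesis $\girth{\Gamma(S)}=4$ to rule out a single triangle, one must manufacture \emph{two} distinct triangles — which is precisely why property (i), two edges inside $M$, is built into the choice of $u$ and $v$.
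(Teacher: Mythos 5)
Your core device is exactly the paper's: for two non-adjacent vertices $u,v$ of the putative induced subgraph, the product $uv$ is nonzero and annihilates every neighbour of $u$ or of $v$, which forces either two distinct triangles through $uv$ or an edge contradicting inducedness. The paper runs the same computation with the product $ce$ in each of the three graphs, and is in fact terser than you are: it does not spell out the case in which the product coincides with one of the six labelled vertices, which your second case handles cleanly via condition (ii). Your verifications of (i) and (ii) for $C_{4,4}$ and $C''_4$ are correct and agree with the paper's choices up to symmetry.

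The one genuine discrepancy is your reading of $C_{4,5}$. In the paper this symbol denotes the six-vertex graph whose only edges are those of the $4$-cycle $a-b-c-d-a$ and the $5$-cycle $a-b-c-f-e-a$, so the two cycles share the path $a-b-c$ (a theta graph with paths of lengths $2$, $2$, $3$ between $a$ and $c$), not the seven-vertex graph obtained by gluing along a single edge. This matters because it is the six-vertex graph that actually occurs in the later applications (Theorem \ref{thm:characterizationg=4} and Proposition \ref{thm:characterizationg=3}, where a vertex $x$ is adjacent both to a pendant vertex and to a vertex of $V_2$), so the statement you prove for your seven-vertex graph is not the one the paper needs for $C_{4,5}$. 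Fortunately your machinery applies verbatim: for the paper's $C_{4,5}$ take $u=c$ and $v=e$ (the paper's own choice), so that $M=\{a,b,d,f\}$ contains the two edges $a-b$ and $a-d$, and no closed neighbourhood $N_G[w]$ contains all of $M$; with that one substitution your argument goes through and coincides with the paper's proof.
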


\medskip
\begin{proof}
 Suppose $\Gamma(S)$ contains $C_{4,4}$ as an induced subgraph, i.e. $\Gamma(S)$ 
 contains vertices
    $a,b,c,d,e,f$, where the only edges are  $a-b-c-d-a$ and $a-b-f-e-a$.
    Consider the product $ce$. Clearly, $(ce)a=(ce)b=(ce)d=(ce)f=0$ and $ce \ne 0$, and since 
     $\Gamma(S)$ contains at most one
  3-cycle, such vertex $ce$ cannot exist in $\Gamma(S)$.

If $\Gamma(S)$ contains $C_{4,5}$ as an induced subgraph, i.e. $\Gamma(S)$ contains vertices
    $a,b,c,d,e,f$, where the only edges are  $a-b-c-d-a$ and $a-b-c-f-e-a$, then similarly as in (1),  $ce \ne 0$ is a 
    zero divisor in $S$, but  cannot exist as a vertex  in $\Gamma(S)$.


If $\Gamma(S)$  contains $C_4^{''}$, which is a 4-cycle $a-b-c-d-a$ together with two vertices 
 $e$ and  $f$, connected with edges $a-e$ and $c-f$, note that $ce\ne 0$ and $(ce)a=(ce)b=(ce)d=(ce)f=0$. Since 
      $\Gamma(S)$ contains at most one
  3-cycle, it follows that $ce$ cannot exist as a vertex  in $\Gamma(S)$.
\end{proof}

\bigskip
\bigskip

\section{Commutative semirings with zero--divisor graphs of girth 4}

\bigskip

In this section we shall describe the zero--divisor graphs of commutative semirings with their
girth equal to 4. 

If the semiring is  a ring, the structure of the ring itself can be deduced from the properties
of its zero--divisor graph. Anderson and Mulay, \cite[Theorems 2.3 and 2.4]{andmul07} have characterized commutative 
rings $R$ with  $\girth{\Gamma(R)}=4$. Their findings about this can be summarized in the following theorem.

\bigskip

\begin{Theorem}[Anderson, Mulay, \cite{andmul07}]
    If $R$ is a commutative ring with identity such that $\girth{\Gamma(R)}=4$, then
    \begin{enumerate}
    \item either $\Gamma(R)=K_{m,n}$, $m,n \geq 2$ and the total quotient ring of $R$ is a direct product of two 
    fields $F_1 \times F_2$, $\vert F_i \vert \geq 3$, 
    \item or $\Gamma(R)=\overline{K}^{\; m}_{3,m}$ 
    and $R=D \times B$, where $D$ is an integral domain with at least 3 elements and
    $B \in \{ \ZZ_4,  \ZZ_2[X]/(X^2) \}$.
%
    \end{enumerate}
\end{Theorem}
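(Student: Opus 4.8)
The plan is to split according to whether $R$ is reduced, and in each case to recover the ring structure from the fact that $\Gamma(R)$ is triangle--free but contains a $4$--cycle.

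\textbf{The reduced case.} If $R$ were reduced with three distinct minimal primes $P_1,P_2,P_3$, then choosing $a_i\in\bigl(\bigcap_{j\ne i}P_j\bigr)\setminus P_i$ would give three vertices with pairwise zero products, i.e.\ a triangle; since $R$ is not a domain, $R$ has exactly two minimal primes $P_1,P_2$, and $P_1\cap P_2=\operatorname{Nil}(R)=0$. Then $Z(R)=P_1\cup P_2$, the product of a nonzero element of $P_1$ and a nonzero element of $P_2$ lies in $P_1\cap P_2=0$, while two distinct nonzero elements of the same $P_i$ never have zero product (else one would lie in $P_1\cap P_2$); hence $\Gamma(R)=K_{m,n}$ with $m=|P_1|-1$, $n=|P_2|-1$, which is triangle--free, and the presence of a $4$--cycle gives $m,n\ge2$. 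Finally $T(R)=F_1\times F_2$ with $F_i=R_{P_i}$ a field (localization of a reduced ring at a minimal prime is a field), and the injection $P_1\setminus\{0\}\hookrightarrow F_2\setminus\{0\}$ (and symmetrically) forces $|F_i|\ge3$. This is conclusion~(1).

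\textbf{The non--reduced case.} Suppose $\operatorname{Nil}(R)\ne0$ and let $k\ge2$ be least with $\operatorname{Nil}(R)^{\,k}=0$. Every element of $\operatorname{Nil}(R)^{\,k-1}$ is square--zero, and distinct nonzero such elements multiply to $0$, so having three of them would force a triangle; hence $\operatorname{Nil}(R)^{\,k-1}=\{0,y\}$ for some $0\ne y$ with $y^2=0$. Using that $\Ann(y)^*\setminus\{y\}$ is an independent set (an edge there closes a triangle through $y$) together with the square--zero element $z+y$, one shows that every square--zero element is $0$ or $y$ and that $2y=0$; with a little more care on elements of larger nilpotency index (ruling out $\ZZ_2[X]/(X^3)$--type configurations, which produce a triangle or an acyclic graph) one obtains $\operatorname{Nil}(R)=\{0,y\}$. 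Then $Ry\subseteq\operatorname{Nil}(R)$ and $1\cdot y=y$ give $Ry=\{0,y\}$, so $\Ann(y)$ is a maximal ideal with $R/\Ann(y)\cong\FF_2$; moreover $\operatorname{Nil}(R)$ is not prime, for otherwise $R/\operatorname{Nil}(R)$ is a domain, $Z(R)=\Ann(y)$, every edge of $\Gamma(R)$ passes through $y$, and $\Gamma(R)$ is acyclic. Thus $R/\operatorname{Nil}(R)$ is a reduced ring that is not a domain; the crux is to promote this to a nontrivial idempotent of $R$, after which $R\cong R_1\times R_2$, and (three or more factors with zero--divisors would give a triangle, while $|\operatorname{Nil}(R)|=2$) one factor is a domain $D$ and the other a non--reduced ring $B$ with $\operatorname{Nil}(B)=\{0,y'\}$ (a reduced non--domain factor would also close a triangle through the idempotent $(1,0)$).

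To identify $B$ and $\Gamma(R)$: if $z\in Z(B)^*$ with $z\ne y'$, pick $0\ne w\in B$ with $zw=0$; then $w\ne z$ (else $z$ would be a nonzero nilpotent $\ne y'$), and $(0,z),(0,w),(1,0)$ form a triangle in $\Gamma(D\times B)$, a contradiction. Hence $Z(B)^*=\{y'\}$, so $|B|=|B/\Ann(y')|\cdot|\Ann(y')|=2\cdot2$, and of the four commutative rings of order $4$ the non--reduced ones are exactly $\ZZ_4$ and $\ZZ_2[X]/(X^2)$. A direct computation of $\Gamma(D\times B)$ — whose vertices are $(0,b)$ for the three nonzero $b\in B$ together with $(a,0)$ and $(a,y')$ for $a\in D\setminus\{0\}$ — shows the $(0,b)$'s and the $(a,0)$'s span a complete bipartite $K_{3,\,|D|-1}$ while each $(a,y')$ is a pendant at $(0,y')$; this is precisely $\overline{K}^{\,|D|-1}_{3,\,|D|-1}$, and the $4$--cycle forces $|D|\ge3$. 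This is conclusion~(2).

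\textbf{The main obstacle.} All the difficulty lies in the non--reduced case, and specifically in producing the idempotent that splits $R\cong D\times B$. Unlike the reduced case — where only $T(R)$ is a product, since a reduced ring with two minimal primes (e.g.\ $k[X,Y]/(XY)$) need not itself decompose — here $R$ itself must decompose, so one has to exclude the possibility that $R/\operatorname{Nil}(R)$ is an indecomposable non--domain, which is the delicate point, and then lift an idempotent across the nilpotent kernel $\operatorname{Nil}(R)=\{0,y\}$. The surrounding steps — bounding $\operatorname{Nil}(R)$ and pinning down $B$ — are combinatorial triangle--hunting, routine once set up; the reduced case, by contrast, is clean throughout.
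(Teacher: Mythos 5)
The paper itself offers no proof of this statement --- it is quoted from Anderson and Mulay \cite{andmul07} --- so your attempt can only be measured against the known argument, and as written it has two genuine gaps. First, in the reduced case, the one-line triangle construction from three minimal primes does not work as stated. If the intersection $\bigcap_{j\ne i}P_j$ ranges only over the other two named primes, the elements $a_i$ exist (prime avoidance), but their pairwise products lie in $P_1\cap P_2\cap P_3$, which is zero only if these are \emph{all} the minimal primes: in $R=k\times k\times k\times k$ with $P_i=\{x:\ x_i=0\}$ your recipe permits $a_1=(1,0,0,1)$ and $a_2=(0,1,0,1)$, whose product is nonzero. If instead the intersection ranges over all other minimal primes, the products do vanish, but the existence of $a_i$ (that is, $\bigcap_{j\ne i}P_j\not\subseteq P_i$) is no longer justified, since prime avoidance fails for infinite intersections. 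The implication ``reduced, triangle-free, not a domain $\Rightarrow$ exactly two minimal primes'' is true, but it needs an actual argument; once two minimal primes are in hand, the rest of your paragraph (edges only between $P_1^*$ and $P_2^*$, $P_1P_2\subseteq P_1\cap P_2=0$, $T(R)\cong R_{P_1}\times R_{P_2}$, and $|F_i|\ge 3$ from $m,n\ge 2$) is essentially correct.

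Second, and more seriously, the non-reduced case --- which is the whole content of conclusion (2) --- is a sketch rather than a proof. The assertions that every square-zero element is $0$ or $y$, that $2y=0$, that ${\rm Nil}(R)=\{0,y\}$ (``with a little more care''), and above all the production of a nontrivial idempotent giving the splitting $R\cong D\times B$, are stated but not argued; you yourself identify the idempotent as ``the delicate point'' and leave it undone. Since the ring-theoretic decomposition $R\cong D\times B$ is precisely what the theorem claims beyond a graph description, the argument is incomplete at its central step and cannot be accepted as a proof of (2). Your endgame is fine once the splitting is granted: $Z(B)^*=\{y'\}$ forces $|\,B\,|=|B/\Ann(y')|\cdot|\Ann(y')|=4$, hence $B\in\{\ZZ_4,\ZZ_2[X]/(X^2)\}$, and the computation $\Gamma(D\times B)=\overline{K}^{\,|D|-1}_{3,\,|D|-1}$ with the $4$-cycle forcing $|D|\ge 3$ is correct; but to complete the proof you must supply the missing structural steps (pinning down ${\rm Nil}(R)$ and producing the idempotent), which is where the real work of Anderson--Mulay lies.
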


\bigskip

 The following examples
 show that there exist large families of commutative semirings with their zero--divisor 
 graphs equal to  $\overline{K}^{\; r}_{n,m}$.

\bigskip
 
 \begin{Example}\label{ex:overlineK}
  Let $S=\{0,a_1,a_2,\ldots,a_{m-1},1\}$ be a totally ordered lattice and $T \subseteq \mm_{n-1}(\BB)$
  the commutative semiring, generated by   $A=J_{n-1}^{n-2}$ and $B=I_{n-1}+J_{n-1}$.
  Note that  $Z(S)=\{0\}$ and $Z(T)=\{0,A\}$. Then, $Z(S\times T)$ consists of two types of
   elements $(s,t)\in S\times T$: the first type are those having  $s=0$ or $t=0$, which form 
   the induced subgraph $K_{n,m}$ of $\Gamma(S\times T)$; the second type are the elements having
   $t=A$, which are all neighbours of the vertex $(0,A)$.  Thus, 
   $\Gamma(S\times T)= \overline{K}^{\; m}_{n,m}$.
   \hfill$\square$
 \end{Example}

\bigskip
\begin{Example}\label{ex:overlineK2}
   Let $S=\mm_2(\BB)$ and consider the semiring $S^{2n-3}$ for some $k\in \NN$. Denote by
  $e_i$ the element in  $S^{2n-3}$,   which has its only nonzero entry equal to $I_2$ in the $i$-th position
  and moreover let $I=I_2$ and $J=J_2$. Let us define the following  elements:
    \begin{center}
     \begin{tabular}{lcl}
        $a=J e_1$ &\quad \quad&$b=e_2$ \\
         $c_j=(I+J)e_1+\sum\limits_{i=1}^{j} J e_{2i+1}$ for $j=1,2,\ldots,n-2$ &\quad\quad& $d=e_2+e_4$\\
        $e=\sum\limits_{i=0}^{n-2} J e_{2i+1}+\sum\limits_{i=1}^{n-2}  e_{2i}$ &\quad\quad&
       \end{tabular}
    \end{center}
 Consider the semiring $T$ generated by $\{1,a,b,c_1,c_2,\ldots,c_{n-2},d,e\}$ and observe that  
   $\Gamma(T)= \overline{K}^{\; 3}_{n,2}$.
   \hfill$\square$
 \end{Example}

\bigskip

The following theorem shows that all zero--divisor graphs with their girth equal to 4 are actually of 
this form.

\bigskip

\begin{Theorem}\label{thm:characterizationg=4}
    If $S$ is a commutative semiring and  $\girth{\Gamma(S)}=4$ then
     $\Gamma(S)=\overline{K}^{\; r}_{m,n}$.
\end{Theorem}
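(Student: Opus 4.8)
The plan is to start from a known 4-cycle in $\Gamma(S)$ and show that the whole graph is forced to have the shape $\overline{K}^{\;r}_{m,n}$, using the exclusion results from Section 5 as the engine. Since $\girth{\Gamma(S)}=4$, there are no triangles, so Lemma \ref{thm:induced=4} tells us $C_4'$ is not an induced subgraph, and Lemma \ref{thm:induced=3} tells us $C_4''$ and $C_{4,4}$ are not induced subgraphs. The first step is to fix a 4-cycle $a-b-c-d-a$ and analyze the distances of an arbitrary vertex $v$ to this cycle. Because $\diam{\Gamma(S)}\le 3$ (Theorem \ref{thm:diam3}) and there are no triangles, every vertex is within distance $2$ of the cycle in a controlled way; I would argue that each vertex outside the cycle is adjacent to exactly one of $a,b,c,d$. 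Indeed, if $v$ were adjacent to two of them, the forbidden configurations (triangle, $C_4''$, $C_{4,4}$) together with the bipartite-type parity of a triangle-free graph would force either a triangle or one of the excluded induced subgraphs, unless $v$ is adjacent to two \emph{opposite} cycle vertices — and that case I would rule out or absorb by relabeling, since being adjacent to both $a$ and $c$ would put $v$ on the "$b$-and-$d$ side", i.e. $v$ plays the role of another $b$- or $d$-vertex.

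Next I would set up the bipartition. Let $X$ be the set of all vertices adjacent to $a$ (including $b$ and $d$) and let $Y$ be the set of all vertices adjacent to $b$ (including $a$ and $c$); the previous paragraph shows $V(\Gamma(S))$ is partitioned by which of $a,b,c,d$ each vertex sees, and I would collect the "$a$-side and $c$-side" vertices into one part $P$ and the "$b$-side and $d$-side" vertices into another part $Q$. The claim is that the induced subgraph on the vertices at distance $1$ from the cycle is a complete bipartite graph between $P\setminus\{\text{distinguished}\}$ and $Q\setminus\{\text{distinguished}\}$: given $p\in P$, $q\in Q$ with $pq\ne0$, the element $pq$ annihilates too many vertices (all of $a,b,c,d$ plus their neighbours), which forces a forbidden induced subgraph or a triangle via the standard "$pq$ is a new vertex, or equals one of them" dichotomy used repeatedly in Lemmas \ref{thm:girth=3}--\ref{thm:induced=3}. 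This is exactly the pattern: $p\cdot q$ must be zero, else it is a zero-divisor adjacent to everything in sight, contradicting $\girth=4$ and the excluded subgraphs.

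Once the "core" is a complete bipartite graph $K_{m,n}$ sitting on $P_0\cup Q_0$ (the vertices each adjacent to at least two vertices of the cycle, equivalently the vertices of degree $\ge 2$ in the core), the remaining vertices are the pendant-type vertices: each such $v$ is adjacent to exactly one vertex, and I would show all of them attach to a single vertex $a$ of the appropriate side, namely a vertex $a$ with $\deg(a)=n$ inside the core — this is precisely the definition of $\overline{K}^{\;r}_{m,n}$. To see they cannot attach to two different core vertices, or to a core vertex other than the maximal-degree one on one side, I would again push the products through: if $v-a$ and $v'-a'$ are two such pendant edges with $a\ne a'$, then depending on whether $a,a'$ are on the same side or opposite sides, the product $vv'$ or $va'$ is a zero-divisor whose neighbourhood creates a $C_4'$ or a triangle. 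The main obstacle is the bookkeeping in this last step — pinning down exactly \emph{which} core vertex the pendants hang off of, and confirming it has degree $n$ and that its partner $b$ of degree $m$ behaves correctly — because the definition of $\overline{K}^{\;r}_{m,n}$ is asymmetric in $m$ and $n$, so one has to be careful that the pendants all land on the "$n$-side" vertex and not get scattered; I expect this to require separately handling the sub-case where the core is small (e.g. $K_{2,n}$ or $K_{m,m}$, where the roles of the two sides are nearly symmetric) so that the graph still matches the stated form after a suitable choice of the distinguished vertices $a$ and $b$.
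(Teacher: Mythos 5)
Your toolkit is the right one (triangle-freeness, the exclusion Lemmas \ref{thm:induced=4} and \ref{thm:induced=3}, and the ``multiply and see what the product annihilates'' trick), and your central step does work: if $p$ is adjacent to $b,d$ and $q$ is adjacent to $a,c$ with $pq\neq 0$, then $pq$ annihilates all of $a,b,c,d$, and a triangle follows whether $pq$ is a new vertex or one of $a,b,c,d$; so the vertices adjacent to two opposite vertices of the fixed $4$-cycle do induce a complete bipartite core (and the worry about the asymmetry of $m$ and $n$ is a non-issue, one simply relabels the parts). The genuine gap is in your opening reduction: the claim that every vertex outside the cycle is adjacent to exactly one of $a,b,c,d$ (or to two opposite ones) is false and hides the hard case. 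In the target graph $\overline{K}^{\;r}_{m,n}$ a pendant attached to a core vertex not lying on your chosen $4$-cycle sees none of $a,b,c,d$ and sits at distance $2$ from the cycle; your classification never accounts for such vertices. Concretely, you assert that every non-core vertex ``is adjacent to exactly one vertex,'' i.e.\ has degree $1$, but you only argue that a non-core vertex cannot be adjacent to two core vertices; you never exclude an edge between two non-core vertices, i.e.\ a path $p-x-y$ of length two hanging off the core. That configuration is not of the form $\overline{K}^{\;r}_{m,n}$, and it is not killed by $C_4'$, $C_4''$ or triangle-freeness alone; it is precisely where the paper's proof invokes the diameter bound (Theorem \ref{thm:diam3}) together with the $C_{4,4}$ and $C_{4,5}$ exclusions of Lemma \ref{thm:induced=3}. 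You list $C_{4,4}$ among your tools at the start but never use it, which is a symptom of this missing case.

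A smaller, fixable inaccuracy: for two pendants hanging on two \emph{different} core vertices no product is needed, and your stated mechanism is off. If the two core vertices are adjacent (opposite sides of the bipartition) you get an induced $C_4'$ directly, and if they lie on the same side you get an induced $C_4''$ (complete a $4$-cycle through them, which exists since each side has at least two vertices); your sketch mentions only $C_4'$ and a triangle. Beyond that, note the organizational difference from the paper: instead of your global classification relative to one fixed cycle, the paper grows an induced $\overline{K}^{\;\rho}_{\mu,\nu}$ one vertex at a time, which forces every newly considered vertex, including those far from the original cycle, through the same case analysis; your global route can be completed, but only after you supply the missing argument that every vertex outside the core has degree one and, in particular, that no two non-core vertices are adjacent.
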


\medskip

\begin{Proof*}
 Since $\girth{\Gamma(S)}=4$, $\Gamma(S)$ contains $K_{2,2}$ as induced subgraph.
 We proceed inductively by adding vertices while always maintaining 
 $\Gamma(S)=\overline{K}^{\; \rho}_{\mu,\nu}$ for some $\rho,\mu,\nu$.
 
 Assume that we have in $\Gamma(S)$ an induced subgraph $\overline{K}^{\; \rho}_{\mu,\nu}$,
 $\mu, \nu \geq 2$. 
 Let us decompose the vertex set of  $\overline{K}^{\; \rho}_{\mu,\nu}$ into 
 3 sets: $V_1=\{v; \; \deg{v}=1\}$ (possibly empty), $V_2$ and $V_3$ are the bipartite parts of
 $K_{\mu,\nu}$, where each  vertex in $V_3$ has degree $\mu$. Moreover, let $a\in V_2$ be
 the vertex with $\deg(a)=\nu+\rho$. If $\rho=0$, then choose $a$ to be any vertex in $V_2$.
 
 Choose any vertex $x \in \Gamma(S)$, that is not in $ \overline{K}^{\; \rho}_{\mu,\nu}$.
 \begin{itemize}
  \item If $x-v$ is an edge for some $v \in V_1$, then by Theorem \ref{thm:diam3}, $x-w$ is an edge 
  for some $w \in V_2\cup V_3$. If $w \in V_2$, then $\Gamma(S)$ contains $C_{4,5}$ as an induced
  subgraph, and if $w \in V_3$, then $\Gamma(S)$ contains $C_{4,4}$ as an induced
  subgraph. Both conclusions contradict Lemma \ref{thm:induced=3}.
 \item  If $\deg(x)=1$ and $x-a$ is an edge, or $V_1 = \emptyset$, then we get 
      $\overline{K}^{\; \rho+1}_{\mu,\nu}$.
 \item  If $V_1 \ne  \emptyset$ and $\deg(x)=1$, then if $x-v$ is an edge for some $v \in V_2\backslash \{a\}$, 
    then $\Gamma(S)$  contains $C''_4$ as an induced subgraph and otherwise, if $x-v$ is an edge for some 
    $v \in V_3$,  then $\Gamma(S)$  contains $C'_4$ as an induced subgraph, which  contradicts Lemma 
    \ref{thm:induced=4}.
 \item  If $\deg(x)\geq 2$ and $w-x-v$ is a path, then $v,w \in V_2$ or $v,w \in V_3$. (Otherwise, 
   $\girth{\Gamma(S)}=3$.)  Say, $v,w \in V_2$.  Suppose there exists $u \in V_2$ such that $xu \ne 0$.
   Now, $(xu)y=(xu)w=(xu)v=0$ for all $y \in V_3$, and this contradicts the assumption
   that  $\girth{\Gamma(S)}=4$. Thus, $x-u$ is an edge in $\Gamma(S)$ for all $u \in V_2$, so we get
   $ \overline{K}^{\; \rho}_{\mu,\nu+1}$.
   Similarly, if $v,w \in V_3$, we  get $ \overline{K}^{\; \rho}_{\mu+1,\nu}$. \hfill$\blacksquare$
 \end{itemize} 
\end{Proof*}

\bigskip

\bigskip

The next 
observation is a semiring generalization of a result that appears in \cite{andlevsha03} for
the ring theoretic case.

\bigskip

\begin{Proposition}\label{thm:nilp}
 If $S$ is a commutative semiring with $\girth{\Gamma(S)}=4$, then all nilpotents are of the order equal to 2.
\end{Proposition}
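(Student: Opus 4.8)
The plan is to argue by contradiction: suppose $S$ has a nilpotent element of order $\geq 3$, pick a minimal-order nilpotent $t$ with $t^2 \neq 0$ but $t^k = 0$ for some $k \geq 3$, and produce a forbidden induced subgraph. Since $t^2 \neq 0$, both $t$ and $t^2$ are vertices of $\Gamma(S)$, and $t - t^2$ is an edge (as $t \cdot t^2 = t^3$, which need not be zero — so I should instead take $t$ with $t^2 \neq 0$, $t^3 = 0$; minimality of the nilpotency index among nilpotents whose square is nonzero guarantees such a $t$ exists once any nilpotent of order $\geq 3$ is present, because if $u^m = 0$ with $m \geq 3$ minimal and $u^2 \neq 0$, then $u^{m-1}$ squares to $u^{2m-2}$; this is getting delicate). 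The cleaner route: let $n \geq 3$ be minimal with $t^n = 0$ for some $t \in S$, so $t^{n-1} \neq 0$; then set $a = t^{n-1}$ and $b = t$. We have $ab = t^n = 0$, so $a - b$ is an edge (they are distinct since $n \geq 3$ forces $t \neq t^{n-1}$, else $t^n = t^2 \neq 0$). Also $a^2 = t^{2n-2} = 0$ since $2n - 2 \geq n$, so $a$ is a nilpotent of order $2$.

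Now the goal is to locate a $K_{2,2}$ around this edge and then an extra pendant vertex forcing $C_4'$, $C_4''$, $C_{4,4}$, or $C_{4,5}$, contradicting Lemmas \ref{thm:induced=4} and \ref{thm:induced=3}. Since $\girth{\Gamma(S)} = 4$, Theorem \ref{thm:characterizationg=4} tells us $\Gamma(S) = \overline{K}^{\;r}_{m,n}$ for some $r, m, n$ with $m, n \geq 2$. So I would directly analyze where $a = t^{n-1}$ and $b = t$ can sit in the graph $\overline{K}^{\;r}_{m,n}$. The key structural fact: in $\overline{K}^{\;r}_{m,n}$, the only vertices of degree $\geq 2$ with an independent pair of common neighbors are the bipartite-part vertices; the edge $a - b$ with $a^2 = 0$ and $ab = 0$ is quite constrained. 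First I would show $a$ and $b$ must both lie in the "core" $K_{m,n}$ on opposite sides (neither can be a degree-one appendage $v_i$, and $a$ cannot be the hub $e$-type vertex of a triangle since there are no triangles). Then $b = t$ is on one side, say the side $V_3$ of vertices of degree $m$, and $a = t^{n-1}$ is on the side $V_2$.

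The main obstacle — and the crux of the argument — is extracting a contradiction from multiplying out powers of $t$ against the neighbors dictated by the bipartite structure. Here is the mechanism I expect to use: since $a = t^{n-1}$ lies in one bipartite part and has neighborhood containing the whole opposite part plus possibly appendages, and $b = t$ lies in the opposite part, consider the element $t^{n-2}$ (which is nonzero, as $n-2 < n-1$ and by minimality of $n$ one checks $t^{n-2} \neq 0$ unless $n = 3$, in which case $t^{n-2} = t = b$). When $n \geq 4$: $t^{n-2} \cdot t^2 = t^n = 0$, so $t^{n-2}$ is adjacent to $t^2$; meanwhile $t^2 \cdot t = t^3$; and $a \cdot t^{n-2} = t^{2n-3} = 0$ since $2n - 3 \geq n$ for $n \geq 3$, so $t^{n-2}$ is also adjacent to $a$. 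Tracking adjacencies of $t^2, t^{n-2}, a, b$ and using that every vertex adjacent to $a$ on the $V_3$-side is forced by the complete-bipartite structure to be adjacent to every $V_2$-vertex, I expect to manufacture either a triangle (contradicting girth $4$) or a $C_4''$/$C_4'$ configuration. For $n = 3$: $t^2 = a$, $t = b$, $t^3 = 0$, so the only relation is $ab = 0$ and $a^2 = 0$; here I would take any two independent common neighbors of $a$ and $b$ from the $K_{2,2}$ (they exist since $m, n \geq 2$) together with $a, b$, then produce a new vertex by multiplying — for instance if $c, d$ are the two $V_2$-vertices with $c, d \neq a$ and $b - c$, $b - d$ edges, then $ac = 0$ (same side? no — need $c$ on opposite side); I would instead pick $c$ a $V_3$-neighbor of $a$ other than $b$ and compute $tc$: $(tc)a = t^2 c \cdot$ — actually $(tc) \cdot a = tc \cdot t^2 = t^3 c = 0$ and $(tc) \cdot$ (anything on the $V_2$ side) $= 0$ since $c$ is adjacent to all of $V_2$, forcing $tc$ to be a new vertex adjacent to a $K_{2,2}$, i.e. a $C_{4,4}$ or worse — contradiction with Lemma \ref{thm:induced=3}. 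Rounding out these power computations and checking $tc \neq 0$ and $tc \notin \{a,b,c,d\}$ in each subcase is where the real work lies; the skeleton is a contradiction via the forbidden subgraphs of the previous section.
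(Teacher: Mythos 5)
There is a genuine gap, and it starts with a structural claim that is in fact false. Write $t$ for your nilpotent with $t^n=0$, $t^{n-1}\neq 0$, $n\geq 3$. Triangle-freeness forces $\deg(t)=1$: if $ty=0$ for any vertex $y\notin\{t,t^{n-1}\}$, then also $yt^{n-1}=(yt)t^{n-2}=0$, so $t-y-t^{n-1}-t$ would be a $3$-cycle. Hence $t$ is a pendant vertex of $\overline{K}^{\; r}_{m,n}$ (its unique neighbour being $t^{n-1}$, which sits on a $4$-cycle), whereas your plan hinges on first "showing" that $a=t^{n-1}$ and $b=t$ both lie in the core $K_{m,n}$ on opposite sides — impossible, since core vertices have degree at least $2$ (girth $4$ forces $m,n\geq 2$). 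The "main mechanism" you describe for $n\geq 4$ ($b=t$ in the opposite part, tracking adjacencies of $t^2,t^{n-2},a,b$ against the bipartite structure) is therefore built on a false premise and is in any case only an expectation, not an argument: no contradiction is actually derived there. For $n=3$ your computation with $tc$ (for $c$ a neighbour of $a$) is close to workable, but the two verifications you explicitly defer — $tc\neq 0$ and $tc$ not being an already-existing vertex — are exactly where the content lies, and the first of them needs precisely the fact $\deg(t)=1$ that contradicts your localization of $b$. So as written the proposal does not close.

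For comparison, the paper's proof is shorter and does not invoke the structure theorem \ref{thm:characterizationg=4} at all: from triangle-freeness it gets $\deg(x)=1$ as above, uses $\diam{\Gamma(S)}\leq 3$ to place $x^{n-1}$ on a $4$-cycle $a-b-c-d-a$ (say $x^{n-1}=d$), and then shows $bx\neq 0$ annihilates $a$, $c$ and $x^{n-1}$, so triangle-freeness forces $bx=x^{n-1}$ and likewise $bx^{n-1}=x^{n-1}$; then $x^{n-1}=bx^{n-1}=(bx)x^{n-2}=x^{2n-3}=0$, a contradiction. If you want to salvage your approach, replace the false localization by the correct one ($t$ pendant, $t^{n-1}$ the attachment vertex in the core) and carry out the excluded-value analysis for the product you form; that essentially reproduces the paper's computation.
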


\medskip

\begin{proof}
 Note that since  $\girth{\Gamma(S)}=4$, graph $\Gamma(S)$ does not contain any triangles.
 
 Suppose $x \in \nn(S)$ and $x^n=0$, $x^{n-1}\ne 0$, $n\geq 3$. Thus, $x-x^{n-1}$ is an edge in 
  $\Gamma(S)$. Note that $deg(x)=1$ since otherwise $xy=0$ implies that $x-y-x^{n-1}-x$ is a 
  triangle in  $\Gamma(S)$.
  
  In  $\Gamma(S)$ there exists a 4-cycle $a-b-c-d-a$ and since $\diam{\Gamma(S)} \leq 3$, it 
  follows that $x^{n-1} \in \{a,b,c,d\}$. Say, $x^{n-1}=d$. Then $(bx)x^{n-1}=(bx)a=(bx)c=0$ and $bx \ne 0$.
  Since $\Gamma(S)$ does not contain any triangles, $bx=x^{n-1}$. Similarly, $bx^{n-1}=x^{n-1}$.
  Now, $x^{n-1}=bx^{n-1}=bxx^{n-2}=x^{2n-3}=0$, which is a contradiction. It follows that $n= 2$.
\end{proof}

\bigskip
\bigskip


\section{Commutative semirings having zero--divisor graphs with one 3-cycle}

\bigskip

We now proceed to a description of  all graphs with their girths equal to 3, with an additional assumption that they contain 
exactly one 3-cycle. 

The main purpose of the last two sections is to obtain the characterization of all rings (or equivalently all additively
cancellative semirings) having the zero--divisor graph with one 3-cycle, which is a step towards the characterization of rings
with the girth of their zero--divisor graph equal to 3.

\bigskip

\begin{Proposition}\label{thm:characterizationg=3}
    If $S$ is a commutative semiring and $\Gamma(S)$ contains exactly one 3-cycle, then
     $\Gamma(S)=K^{\;  \triangle(r_1,r_2,r_3)}_{m,n}$.
\end{Proposition}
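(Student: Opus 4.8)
The plan is to proceed by induction on the number of vertices, exactly in the spirit of the proof of Theorem \ref{thm:characterizationg=4}, maintaining at each stage that the induced subgraph built so far is of the form $K^{\;\triangle(\rho_1,\rho_2,\rho_3)}_{\mu,\nu}$ (or a degenerate version of it). The base case is the unique $3$-cycle itself, which one should think of as the triangle on the vertices $a,b,e$ in the notation of the definition of $K^{\;\triangle}_{m,n}$; formally this is $K^{\;\triangle(0,0,0)}_{1,1}$ after identifying the two ``bipartite'' vertices appropriately. By Lemma \ref{thm:girth=3}, as soon as $\Gamma(S)$ contains the triangle and any cycle of length $\ge 4$, it contains $C_{4,3}$ as an induced subgraph, so the generic picture already has the shape of a $4$-cycle glued to the $3$-cycle along an edge — this fixes the roles of $a$, $b$, $e$ and gives us the vertices playing the role of the $K_{m,n}$ part.

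First I would set up the decomposition of the vertex set of the current induced subgraph $K^{\;\triangle(\rho_1,\rho_2,\rho_3)}_{\mu,\nu}$ into the analogue of the sets $V_1,V_2,V_3$ from the Theorem \ref{thm:characterizationg=4} proof: the bipartite parts of $K_{\mu,\nu}$, the degree-one vertices $v_i$ attached to $a$, the degree-one vertices $v_i$ attached to $b$ (these come from the $\overline{K}^{\;\rho_1}$ part and from $\rho_2$), the degree-one vertices $w_j$ attached to $e$, and the special vertex $e$ with its two edges $a-e$ and $e-b$. Then I would take an arbitrary new vertex $x$ not in the subgraph and, using $\diam{\Gamma(S)}\le 3$ (Theorem \ref{thm:diam3}), case-split on which vertices $x$ is adjacent to. The key tools are: (i) $\girth{\Gamma(S)}=3$ with exactly one $3$-cycle, so any newly created triangle is forbidden and forces algebraic collapses; (ii) Lemma \ref{thm:induced=3}, which forbids $C_4''$, $C_{4,4}$, $C_{4,5}$ as induced subgraphs; and (iii) the standard trick that if $x$ has two neighbours $v,w$ in one ``side'' and there is some other vertex $u$ on that side with $xu\ne 0$, then $xu$ is a zero-divisor with too many neighbours, contradicting that there is only one triangle or the girth condition. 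Each admissible case should force $x$ to be one of: a new degree-one leaf on $a$, on $b$, or on $e$ (increasing $\rho_1$ or $\rho_2$ or $\rho_3$), or a new vertex completing one more row/column of the bipartite block (increasing $\mu$ or $\nu$), or a new vertex on the side of $a$ that is adjacent to all of the opposite side (contributing to $\rho_1$).

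The main obstacle, I expect, is the bookkeeping of the many cases — there are more vertex types than in the girth-$4$ situation because the triangle vertex $e$ and its attached leaves $w_j$, together with the asymmetry between $a$ and $b$, multiply the configurations for the new vertex $x$. In particular one must carefully rule out a new vertex adjacent to $e$ and to something in the $K_{\mu,\nu}$ block other than $a$ or $b$ (this would create a second triangle or an induced $C_4''/C_{4,4}$), and rule out a new vertex of degree one hanging off a leaf $v_i$ or $w_j$ (again an induced $C_{4,4}$ or $C_{4,5}$, contradicting Lemma \ref{thm:induced=3}). One also has to handle degenerate shapes where $\mu=1$ or $\nu=1$, or $\rho_i=0$, so that ``$K_{\mu,\nu}$'' and the special vertices coincide — here the definition of $K^{\;\triangle(r_1,r_2,r_3)}_{m,n}$ with its conventions $K_{1,0}=K_1$, $S_{0,n}=K_{1,n+1}$ absorbs these cases, but one must check the inductive step does not leave the family. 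Once the case analysis is complete, each step either keeps the graph in the family $K^{\;\triangle(r_1,r_2,r_3)}_{m,n}$ or yields a forbidden induced subgraph, so the induction goes through and $\Gamma(S)=K^{\;\triangle(r_1,r_2,r_3)}_{m,n}$.
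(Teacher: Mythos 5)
Your plan is essentially the paper's own proof: the paper also invokes Lemma \ref{thm:girth=3} to obtain the induced $C_{4,3}$, then grows the graph one vertex at a time while maintaining the shape $K^{\;\triangle(\rho_1,\rho_2,\rho_3)}_{\mu,\nu}$, splitting the current vertex set into the degree-one leaves, the two bipartite parts, and the triangle vertex $e$, and ruling out all other attachments of a new vertex $x$ via Theorem \ref{thm:diam3}, Lemma \ref{thm:induced=3}, and the zero--divisor product trick you describe, finishing with the degenerate case where the triangle is the only cycle. The remaining work in your proposal is only the routine bookkeeping of the cases, which the paper carries out in exactly the way you outline.
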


\medskip

\begin{proof}
 If $\Gamma(S)$ contains an $n$-cycle for some $n \ge 4$ then it also contains $C_{4,3}$, 
  $a-b-c-d-a-e-d-a$ as an
 induced  subgraph by Lemma \ref{thm:girth=3}.

 We proceed by adding arbitrary vertices from $\Gamma(S)$ to this subgraph, while showing that in the process we 
 always maintain the structure of  
 $\Gamma(S)=\overline{K}^{\; \triangle(\rho_1,\rho_2,\rho_3)}_{\mu,\nu}$ for some $\rho,\mu,\nu$.
 
 Assume that in $\Gamma(S)$, we have an induced subgraph $\overline{K}^{\; \triangle(\rho_1,\rho_2,\rho_3)}_{\mu,\nu}$,
 $\mu, \nu \geq 2$. 
 Let us decompose the vertex set of  $\overline{K}^{\triangle(\rho_1,\rho_2,\rho_3)}_{\mu,\nu}$ into 
 4 sets: $V_1=\{v; \; \deg{v}=1\}$ (possibly empty), $V_2$ and $V_3$ are the bipartite parts of
 $K_{\mu,\nu}$, where each  vertex in $V_3$ has degree $\mu$, and $V_4=\{e\}$, the top of the
 3-cycle.
 
 Choose any vertex $x \in \Gamma(S)$, that is not in $ \overline{K}^{\triangle(\rho_1,\rho_2,\rho_3)}_{\mu,\nu}$ and
 add it to the graph.
 \begin{itemize}
  \item If $x-v$ is an edge for some $v \in V_1$, then by Theorem \ref{thm:diam3}, $x-w$ is an edge 
  for some $w \in V_2\cup V_3$. If $w \in V_2$, then $\Gamma(S)$ contains $C_{4,5}$ as an induced
  subgraph, and if $w \in V_3$, then $\Gamma(S)$ contains $C_{4,4}$ as an induced
  subgraph. Both conclusions contradict Lemma \ref{thm:induced=3}.
 \item  If $\deg(x)=1$ and $x$ is a neighbour of $a$, $d$ or $e$,  then we get 
        $\overline{K}^{\triangle(\rho_1+1,\rho_2,\rho_3)}_{\mu,\nu}$,  
        $\overline{K}^{\triangle(\rho_1,\rho_2+1,\rho_3)}_{\mu,\nu}$ or  
        $\overline{K}^{\triangle(\rho_1,\rho_2,\rho_3+1)}_{\mu,\nu}$, respectively.
 \item  If $\deg(x)=1$ and $x$ is not a neighbour of $a$, $d$ and $e$, let us assume without loss of generality
    that   $x-v$ is an edge for some $v \in V_2 \backslash \{a,d\}$. Since $\deg(x)=1$, then $xa \ne 0$
    and $(xa)y=(xa)v=(xa)e=0$ for all $y \in V_3$, and this contradicts the assumption
   that  $\Gamma(S)$ has exactly one 3-cycle. 
  \item  If $\deg(x)\geq 2$ and $w-x-v$ is a path, then $v,w \in V_2$ or $v,w \in V_3$. (Otherwise, 
  we obtain a new 3-cycle in  $\Gamma(S)$ if $w\in V_2$ and $v\in V_3$ or if $w$ and $v$  are two vertices of
  the 3-cycle $a-e-d-a$, and we obtain $C_{4,4}$ if one of $v,w$ is equal to $e$.) 
   Say, $v,w \in V_2$.  Suppose there exists $u \in V_2$ such that $xu \ne 0$.
   Now, $(xu)y=(xu)w=(xu)v=0$ for all $y \in V_3$, and this contradicts the assumption
   that  $\Gamma(S)$ has exactly one 3-cycle. 
   Thus, $x-u$ is an edge in $\Gamma(S)$ for all $u \in V_2$, so we get
   $ \overline{K}^{\triangle(\rho_1,\rho_2,\rho_3)}_{\mu,\nu+1}$.
   Similarly, if $v,w \in V_3$, we  get $ \overline{K}^{\triangle(\rho_1,\rho_2,\rho_3)}_{\mu+1,\nu}$. 
 \end{itemize} 
 If the only cycle in $\Gamma(S)$ is the 3-cycle, then all other vertices in $\Gamma(S)$ are at distance
 1 from the triangle. Otherwise, if $a-b-e-a-x-y$ is a subgraph of $\Gamma(S)$, but then
 $xb \ne 0$ and $(xb)a=(xb)e=(xb)y=0$ which is a contradiction, since we obtain a new 3-cycle in 
 $\Gamma(S)$.
\end{proof}

\bigskip

\begin{Corollary}\label{cor:triangle}
    If $S$ is a commutative semiring with the only  cycle of $\Gamma(S)$ being a  3-cycle,
    then $\Gamma(S)=K^{\;  \triangle(r_1,r_2,r_3)}_{1,1}$.
\end{Corollary}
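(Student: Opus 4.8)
The plan is to specialize Proposition~\ref{thm:characterizationg=3} to the situation in which the only cycle in $\Gamma(S)$ is a single $3$-cycle, and argue that the bipartite ``padding'' is forced to be as small as possible. By Proposition~\ref{thm:characterizationg=3} we already know $\Gamma(S)=K^{\triangle(r_1,r_2,r_3)}_{m,n}$ for some parameters; so it remains only to show $m=n=1$ (while $r_1,r_2,r_3$ may be arbitrary, since those extra vertices are pendant and create no new cycle).

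First I would recall the definition of $K^{\triangle(r_1,r_2,r_3)}_{m,n}$: it contains the complete bipartite graph $K_{m,n}$ sitting inside $\overline{K}^{\,r_1}_{m,n}$, together with the extra vertex $e$ completing a triangle $a-e-b$, plus pendant vertices attached at $b$, $e$, and $a$. The key observation is that $K_{m,n}$ contains a $4$-cycle as soon as $m\geq 2$ and $n\geq 2$. Hence the hypothesis that the only cycle is a $3$-cycle forces $\min\{m,n\}=1$. I would then argue $\max\{m,n\}=1$ as well: if, say, $m=1$ but $n\geq 2$, then the vertex $a$ (of degree $n$ in $K_{1,n}$) together with the triangle vertex $e$ and the edge $a-b$ still only gives the triangle $a-e-b$, but the two bipartite vertices on the $n$-side, call them $b$ and $b'$, together with $a$ and $e$ would form the $4$-cycle $b-a-b'-\,$? — here I need to be careful, so the cleaner route is: in $K^{\triangle}_{m,n}$ the vertex $b$ has degree $m$ inside $K_{m,n}$ and is also joined to $e$, so if $n\geq 2$ the two vertices adjacent to $a$ in $K_{m,n}$ other than via $e$ give, with $e$ and the edge $e-b$, a $4$-cycle $a - b_1 - \cdots$; rather than chase indices, the robust argument is that $K_{m,n}$ with $m+n\geq 3$ either is a star (one of $m,n$ equals $1$) or contains $C_4$, and in the star case the center $a$ of $K_{1,n}$ together with $e$, $b$ and any second leaf $w$ of the star produces the $4$-cycle $a-b-e-a$? no — so I would simply note that when $n\geq 2$, taking two distinct vertices $c,c'$ on the size-$n$ side of $K_{m,n}$ and the vertex $a$ on the other side gives the path $c-a-c'$, and since $\Gamma(S)=\overline{K}^{\,r_1}_{m,n}\cup\{\ldots\}$ has $\diam \le 3$ and both $c,c'$ are nonadjacent to $e$, one gets a second cycle through $e$; the careful bookkeeping here is the only place requiring attention.

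The main obstacle I anticipate is precisely this index-chasing: verifying that \emph{any} configuration in $K^{\triangle(r_1,r_2,r_3)}_{m,n}$ with $(m,n)\neq(1,1)$ contains a second cycle (a second triangle or a $4$-cycle), so that the hypothesis of the corollary rules it out. Once that is done, the conclusion $\Gamma(S)=K^{\triangle(r_1,r_2,r_3)}_{1,1}$ is immediate, because with $m=n=1$ the graph $\overline{K}^{\,r_1}_{1,1}$ is just an edge $a-b$ with $r_1$ pendants at $a$, and adjoining $e$ with its pendants yields exactly $K^{\triangle(r_1,r_2,r_3)}_{1,1}$ with the single triangle $a-e-b$ and all remaining vertices pendant — hence no further cycle, consistent with the hypothesis. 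I would also explicitly remark that in fact the argument of Proposition~\ref{thm:characterizationg=3} never forces $\mu,\nu\geq 2$ in this degenerate case: one simply starts the induction from the lone $3$-cycle $a-e-b-a$ (no $4$-cycle is present by hypothesis), and every vertex added is a pendant attached to $a$, $b$, or $e$, exactly increasing one of $r_1,r_2,r_3$; this gives a direct, self-contained proof without even invoking the full strength of the proposition.
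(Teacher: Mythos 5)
There is a genuine gap, and it sits exactly where you yourself flagged it. In your main route you want to show that every graph $K^{\triangle(r_1,r_2,r_3)}_{m,n}$ with $(m,n)\neq(1,1)$ contains a second cycle, but this is false when $\min\{m,n\}=1$ and $\max\{m,n\}\geq 2$: in $K^{\triangle(r_1,r_2,r_3)}_{1,n}$ the $n-1$ bipartite vertices other than $b$ are adjacent only to $a$, so the triangle $a-e-b-a$ is still the unique cycle, and no amount of index-chasing will produce the contradiction you are hunting for. The correct observation is the opposite one: in this degenerate case the graph is literally the same graph as $K^{\triangle(r_1+n-1,r_2,r_3)}_{1,1}$ (and symmetrically $K^{\triangle(r_1,r_2,r_3)}_{m,1}=K^{\triangle(r_1,r_2+m-1,r_3)}_{1,1}$), the extra bipartite vertices being nothing but additional pendants at $a$ (resp.\ $b$). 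So once your $C_4\subseteq K_{m,n}$ argument gives $\min\{m,n\}=1$, the conclusion follows by re-labelling the parameters, not by contradiction; as written, your case analysis cannot be completed.

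Your fallback ``self-contained'' argument has a hole as well: the assertion that every vertex outside the triangle is a pendant attached to $a$, $b$ or $e$ is precisely the nontrivial point, and connectedness, diameter at most $3$ and uniqueness of the cycle do not imply it --- a triangle with a path of length two attached at $a$ satisfies all of these purely graph-theoretic constraints. Ruling that configuration out requires the semiring structure, and this is exactly how the paper proves the corollary (it is the last paragraph of the proof of Proposition \ref{thm:characterizationg=3}): if $a-b-e-a$ is the triangle and $a-x-y$ is a path with $y$ off the triangle, then $xb\neq 0$ (otherwise $a-x-b-a$ is a second triangle) while $(xb)a=(xb)e=(xb)y=0$, which forces a new cycle, a contradiction; hence every remaining vertex is at distance $1$ from the triangle and is a pendant, since a vertex adjacent to two triangle vertices would again create a second cycle. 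To repair your proposal, either supply this algebraic step, or keep the appeal to Proposition \ref{thm:characterizationg=3} and replace the sought-for contradiction by the parameter re-labelling above.
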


\bigskip

 The following example shows that there exist commutative semirings with their zero--divisor graphs
 equal to   $K^{\;  \triangle(r_1,r_2,r_3)}_{1,1}$ for all $r_1,r_2,r_3 \geq 1$.

\bigskip

 \begin{Example}\label{ex:overlineKtriangle0}
     Let us denote by $e_i$ the element in $\BB^{r_1+r_2+r_3}$, 
     which has its only nonzero entry in the $i$-th position and $f_j$ the element in $\BB^{r_1+r_2+r_3}$, 
     which has its only zero entry in the $j$-th position. 
     Let us define elements 
     \begin{center}
     \begin{tabular}{lll}
      $a_i=f_i \sum\limits_{t=1}^{r_1-1} e_{t} + e_{r_1+r_2} + e_{r_1+r_2+r_3}$ & for all & $i=1,2,\ldots,r_1-1$,\\ 
      $b_j=f_{r_1+j} \sum\limits_{t=1}^{r_2-1} e_{r_1+t}+e_{r_1}+e_{r_1+r_2+r_3}$ & for all & $j=1,2,\ldots,r_2-1$,\\ 
      $c_\ell=f_{r_1+r_2+\ell} \sum\limits_{t=1}^{r_3-1} e_{r_1+r_2+t} +e_{r_1}+e_{r_1+r_2}$ & for all & 
          $\ell=1,2,\ldots,r_3-1$.\\
    \end{tabular}
    \end{center}
    Denote by $S$ the semiring, generated by 
    \begin{eqnarray*}
     {\mathcal Z}&=&\{e_{r_1},e_{r_1+r_2},e_{r_1+r_2+r_3},
       a_i,b_j,c_\ell;  \\
    && \quad=1,2,\ldots,r_1-1, j=1,2,\ldots,r_2-1, \ell=1,2,\ldots,r_3-1\}
    \end{eqnarray*}
    and note that $Z(S)^*={\mathcal Z}$.
    Clearly,  
        \begin{center}
     \begin{tabular}{lcl}
        $e_{r_1}-a_i$ &\quad& $e_{r_1}-(e_{r_1+ r_2}+e_{r_1+ r_2+r_3})$\\
        $e_{r_1+r_2}-b_j$ &\quad& $e_{r_1+r_2}-(e_{r_1}+e_{r_1+ r_2+r_3})$\\
        $e_{r_1+r_2+r_3}-c_\ell$ &\quad& $e_{r_1+r_2+r_3}-(e_{r_1}+e_{r_1+ r_2})$
       \end{tabular}
    \end{center}
    are edges in $\Gamma(S)$ for all 
    $i=1,2,\ldots,r_1-1$, $j=1,2,\ldots,r_2-1$ and  $\ell=1,2,\ldots,r_3-1$, and 
    $e_{r_1}-e_{r_1+r_2}-e_{r_1+ r_2+r_3}-e_{r_1}$ form a 3-cycle.
    Thus, $\Gamma(S)=K^{\;  \triangle(r_1,r_2,r_3)}_{1,1}$.
      \hfill$\square$
 \end{Example}

\bigskip

Recall that we proved in Lemma \ref{thm:girth=3} that all zero--divisor graphs containing exactly one 3-cycle and    at least one  $k$-cycle, $k \geq 4$, also contain $C_{4,3}$ as an induced subgraph. 
The following technical lemma  will give us some algebraic properties on the elements, corresponding to the
vertices of $C_{4,3}$. It will enable us to prove that in this case $\Gamma(S)=K^{\;  \triangle(r_1,r_2,0)}_{m,n}$
where   $r_1,r_2 \geq 1$, $m,n\geq 2$.

\bigskip

\begin{Lemma}\label{thm:technical}
    Suppose $S$ is a commutative semiring,  $\Gamma(S)$ contains exactly one 3-cycle and 
    at least one  $k$-cycle, $k \geq 4$. Let us denote by
       $a-b-c-d-a-e-b-a$   its induced subgraph and let $a-f$ be an edge in $\Gamma(S)$ and 
       $\deg{f}=1$. Then,
    \begin{enumerate}
     \item $a^2=b^2=e^2=0$,
     \item $ac=ec=fc=a$,
     \item $bd=ed=b$,
     \item $a+b=e$.
     \end{enumerate}
 Moreover, if $S$ is additively cancellative, then 
    \begin{enumerate}
     \item[(5)] $2a=2b=2e=0$,
     \item[(6)] $b+e=a$ and $a+e=b$.
    \end{enumerate}
\end{Lemma}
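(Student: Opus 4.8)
The plan is to work with the induced subgraph $a-b-c-d-a$, $a-e-b$, and the pendant edge $a-f$, and repeatedly use two principles: if $xy\ne 0$ then $xy$ is a vertex whose neighborhood contains all common neighbors of $x$ and $y$; and the graph has exactly one $3$-cycle, so no new triangle can be created. Since $\deg f=1$, the only neighbor of $f$ is $a$, so $fz\ne 0$ for every vertex $z\ne a$.

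**Proof of (1)–(4).** First I would show $a^2=0$: if $a^2\ne 0$, then $(a^2)b=(a^2)d=(a^2)e=0$, so $a^2$ is a vertex adjacent to $b,d,e$; but $b-e-a$ and $b-d$ already force $a^2\in\{b,e,d\}$ would create a second triangle among $\{b,d,e,a^2\}$ unless $a^2=a$, and $a^2=a$ together with $a-b$, $a-d$ would make $a$ lie on... actually $a^2=a$ gives $ab=a\cdot ab=0$ fine, but then $a-f$ and $a^2=a$: consider $fa^2=fa=0$, consistent; the contradiction must instead come from $a^2$ being adjacent to $b$ and $d$ while $b-d$ is an edge, producing the triangle $a^2-b-d-a^2$, a second $3$-cycle — unless $a^2\in\{b,d\}$, but $b^2=b\cdot b$: if $a^2=b$ then $ab=a\cdot a^2=a^3$; one then chases $a$ being adjacent to $c$ (since $b=a^2$ and $bc=0$ forces... ) — I expect this to resolve to a contradiction or to $a^2=0$ directly. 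The same argument gives $b^2=0$; and $e^2\ne0$ would make $e^2$ adjacent to $a$ and $b$, i.e. $e^2\in\{a,b\}$ (else triangle $e^2-a-b$), then multiplying by $c$ or $d$ finishes it, so $e^2=0$. For (2): $ac\ne 0$ (else $a-b-c$ with $ac=0$ would shorten a cycle / create a triangle with $d$), and $ac$ is adjacent to $b,d$ and also to $e$ (since $(ac)e = c(ae)=0$) and to $f$ (since $(ac)f=c(af)=0$ as $af=0$); so $ac$ is adjacent to $b,d,e,f$, and the only vertex with $f$ in its neighborhood is $a$ itself, forcing $ac=a$. Identically $ec=c(e\cdot?)$: $(ec)b = c(eb)=0$, $(ec)d=0$, $(ec)f = c(ef)$; here I need $ef$ — but $e-a-f$ is a path, $ef\ne 0$, $(ef)b=0$, $(ef)d=0$, so $ef$ is adjacent to $b,d$, hence $ef\in\{a,e\}$-type, and pushing this through gives $ef=a$ (it cannot be $e$ or $b$ without a triangle). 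Then $(ec)f = c(ef)=ca=a$, so $ec$ is adjacent to $f$, giving $ec=a$. Similarly $fc$: $(fc)b=c(fb)$ and $fb\ne 0$ adjacent to... this chain gives $fc=a$. Statement (3) is the mirror image: $bd\ne 0$, adjacent to $a,c,e$, forcing $bd=b$; and $ed=b$ the same way. For (4): set $g=a+b$. Then $gc = ac+bc = a+0 = a\ne 0$, and $gd = ad+bd = 0+b=b\ne0$, so $g$ is adjacent to neither $c$ nor $d$; also $ga = a^2+ab$ and $gb=ab+b^2=ab$; and $ge = ae+be=0$, so $g-e$ is an edge, meaning $g\in\{a,b\}$ (a neighbor of $e$) unless $g$ is a new neighbor of $e$ — but a new neighbor $g$ of $e$ with $gc\ne0,gd\ne0$ is allowed by degree considerations only if it does not create a triangle; since $gc=a$ and $ga$ = ? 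I would compute $ga$: multiply $a+b=g$ by nothing, instead note $g$ and $a$ share neighbor $e$, and if $g\sim a$ we get triangle $g-a-e-g$. So either $g=a$, or $g=b$, or $ga\ne0$. If $ga\ne 0$: $ga=a^2+ab=ab$ (using $a^2=0$), and $(ab)$... $ab\ne0$ would be adjacent to $c$ (since $(ab)c=b(ac)=ba=ab$?!) — messy; I expect the clean outcome is $a+b=e$, proven by showing $g$ has exactly the neighborhood of $e$ (adjacent to $a$, to $b$, to $f$ since $gf = af+bf = bf\ne0$... wait that says $g\not\sim f$). Let me instead argue: $g\sim e$; $gc=a$, $gd=b$ so $g\not\sim c,d$; and $g^2 = (a+b)^2 = a^2+2ab+b^2 = 2ab$. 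The identification $g=e$ will follow because $e$ is characterized as the unique common neighbor of $a$ and $b$ other than... hmm, $a$ and $b$ have common neighbor $e$; do they have another? Any common neighbor $h$ of $a,b$ gives triangle $h-a-b$, so by the one-triangle hypothesis $h=e$ is the *only* common neighbor. Since $g=a+b$ satisfies $ga\cdot$? — I need $g\sim a$ and $g\sim b$. We have $gb = ab$; if $ab=0$ then $g\sim b$, and $ga=ab=0$ so $g\sim a$, hence $g$ is a common neighbor of $a,b$, so $g=e$. So it remains to show $ab=0$: but $a-b$ is an edge of $\Gamma(S)$, so $ab=0$ by definition! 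That closes (4) immediately, and also $ga=ab=0$, $gb=ab=0$ — good.

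**Proof of (5)–(6) under additive cancellativity.** Here $S$ is additively cancellative. For (5): from $a^2=0$ and $a+b=e$, square to get $e^2 = a^2+2ab+b^2 = 2ab = 0$ (since $ab=0$), consistent; that does not yet give $2a=0$. Instead I would multiply the relation $a+b=e$ by $c$: $(a+b)c = ec$, i.e. $ac+bc = ec$, i.e. $a+0 = a$, fine — no info. Multiply $a+b=e$ by $a$: $a^2+ab = ae$, i.e. $0+0 = ae = 0$, fine. The key is to produce an equation of the form $x+y=x$ and cancel $x$. Consider $a+e$: $(a+e)c = ac+ec = a+a = 2a$, and $(a+e)d = ad+ed = 0+b = b$, $(a+e)b = ab+eb = 0+0=0$, $(a+e)f = af+ef = 0+a = a\ne 0$. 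So $h:=a+e$ satisfies $h\sim b$, $h\not\sim f$ (since $hf=a\ne0$), $h\not\sim c$ iff $2a\ne0$. Now also $h\cdot a = a^2+ea = 0$, so $h\sim a$; thus $h$ is a common neighbor of $a$ and $b$, forcing $h=e$, i.e. $a+e=e$, and cancelling $e$ gives... wait $a+e=e$ cancels to $a=0$, false — so this branch is impossible, meaning one of the "neighbor" deductions fails, namely $hf\ne0$ blocks $h=e$ unless... Let me recompute: if $h=e$ then $hf=ef=a$, and indeed $e\not\sim f$ is consistent with $h=e$. And $h=e$ means $a+e=e$; cancel $e$: $a=0$, contradiction. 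Therefore $h\ne e$, so $h=a+e$ is a vertex $\ne e$ with $h\sim a$ and $h\sim b$ — but that's a second triangle $h-a-b-h$, contradiction! Hence the only escape is that $h$ is *not* a vertex, i.e. $a+e=0$. But $a+e \ne 0$ since $(a+e)d = b\ne0$ shows $a+e$ is a zero-divisor, and if $a+e=0$ then $e=-a$... in a semiring there are no negatives in general, but additive cancellativity alone does not give $a+e=0$ contradiction directly. Here I expect the resolution: $a+e=0$ forces (multiplying by $d$) $b=0$, contradiction; hence we reach a genuine contradiction *unless* $2a=0$ makes $h\not\sim c$ false i.e. $h\sim c$, which would mean $2a=0$. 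So I conclude $2a=0$; symmetrically $2b=0$, and $2e = 2(a+b) = 0$. For (6): with $2a=2b=2e=0$, compute $b+e = b+(a+b) = a+2b = a+0 = a$, and $a+e = a+(a+b) = 2a+b = b$. These are direct, no obstacle. The main obstacle throughout is item (5): extracting a cancellable equation $x+y=x$; the trick above — forming $a+e$, showing it would have to be a common neighbor of $a$ and $b$ (hence equal $e$, hence $a=0$ by cancellation) unless $2a=0$ kills the required non-adjacency — is the crux, and I would present exactly that argument.
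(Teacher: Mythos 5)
Your overall toolkit is the right one (force products and sums of zero--divisors to coincide with named vertices, using the induced non-edges $ac,bd,ce,de\ne 0$ and the one-triangle hypothesis), and your treatment of (3), (4) and (6) follows the same lines as the paper. But there are genuine gaps. First, (1) is never actually proved: you at one point treat $b-d$ as an edge of $C_{4,3}$ (it is a non-edge, which is exactly what kills the cases $a^2=b$ and $a^2=d$), and, more importantly, you never exclude $a^2=a$, ending with ``I expect this to resolve.'' The paper avoids this by proving (2) \emph{before} (1) and then using $a=ec$: if $a^2=a\ne 0$ then $a^2=a(ec)=(ae)c=0$, a contradiction. Since your argument for (4) needs $a^2=b^2=0$, this gap propagates. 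Second, your proof of $ec=a$ (and of $fc=a$) is broken: it hinges on $ef=a$, whose justification asserts $(ef)d=0$ without reason ($ed$ and $fd$ are both nonzero, and $ef=b$ is not excluded by any triangle count); worse, from $(ec)f=c(ef)=a\ne 0$ you conclude that $ec$ is \emph{adjacent} to $f$, which is backwards. The clean argument needs no detour through $f$: $(ec)a=(ec)b=(ec)d=0$, so if $ec$ were a new vertex it would form a second triangle through the edge $a-b$, while $ec\in\{b,d,e\}$ each contradict inducedness; hence $ec=a$, and similarly $ac=fc=a$, $bd=ed=b$.

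Third, your argument for (5) is logically incoherent. After computing $(a+e)a=(a+e)b=0$ you call $a+e$ a common neighbour of $a$ and $b$, force $a+e=e$, and then claim a contradiction via a second triangle --- but you never consider $a+e=b$, which is exactly what happens (it is statement (6)), so there is no contradiction; and the closing inference ``hence $2a=0$'' does not follow from your analysis, since adjacency of $a+e$ to $c$ plays no role in the triangle you invoke. The paper's route is one line: by cancellativity $2a\ne a$, and $(2a)b=(2a)e=(2a)d=0$ forces $2a=0$ (any other value creates a second triangle or contradicts a non-edge); then $2b=0$, $2e=2(a+b)=0$, and (6) follows by adding, as you do. Your $a+e$ idea can in fact be salvaged into a different valid route: cancellativity gives $a+e\ne 0,a,e$, so the one-triangle hypothesis forces $a+e=b$, i.e.\ (6) first, and cancelling $b$ in $2a+b=b$ yields (5) --- but that is not the argument you wrote.
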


\medskip

\begin{proof}
  Firstly, let us note that $ec\ne0$ and $(ec)a=(ec)b=(ec)d=0$, and since $\Gamma(S)$ contains only one 
  3-cycle, we have $ec=a$. Similarly we prove that $ac=fc=a$ and $bd=ed=b$.
  
  Consider now the  element $a^2$. Since $a^2e=a^2b=0$, and $a^2\ne b$ (otherwise $bd=a^2d=0$),
  $a^2\ne e$ (otherwise $be=a^2e=0$),  $a^2\ne a$ (otherwise $a^2=aa=aec=0$), it follows that 
  $a^2=0$. Similarly we prove that $b^2=0$.

  Now, let us observe that  $(a+b)a=(a+b)b=(a+b)e=0$. Note that $a+b \ne 0$ (otherwise, 
  $bd=(a+b)d=0$),  $a+b \ne a$ (otherwise,  $bd=(a+b)d=ad=0$),  $a+b \ne b$ (otherwise,  
  $ac=(a+b)c=bc=0$), and since $\Gamma(S)$ contains only one 3-cycle, we have $a+b=e$
  and therefore also $e^2=a^2+b^2+2ab=0$.
  
  Suppose now, $S$ is additively cancellative. Then, $2a \ne a$ and $(2a)b=(2a)e=(2a)d=0$, thus 
  $2a=0$. Similarly, $2b=0$ and therefore also $2e=2(a+b)=0$.
  Now, it follows that $a=a+b+b=e+b$ and $b=b+a+a=e+a$.
\end{proof}

\bigskip

\begin{Theorem}\label{thm:completecharacterizationg=3}
    If $S$ is a commutative semiring and  $\Gamma(S)$ contains exactly one 3-cycle and at least one 
    $k$-cycle, $k \geq 4$,    then $\Gamma(S)=K^{\;  \triangle(r_1,r_2,0)}_{m,n}$ and 
    $r_1,r_2 \geq 1$, $m,n\geq 2$.
\end{Theorem}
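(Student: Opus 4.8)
The plan is to proceed exactly as in the proof of Proposition \ref{thm:characterizationg=3}, but now exploiting Lemma \ref{thm:technical} to pin down the parameter $r_3$. By Lemma \ref{thm:girth=3}, $\Gamma(S)$ contains $C_{4,3}$, say with vertices $a-b-c-d-a$ and $a-b-e-a$, as an induced subgraph; here $a,b$ are the endpoints of the common edge, $c,d$ lie on the $4$-cycle and $e$ is the apex of the $3$-cycle. By Proposition \ref{thm:characterizationg=3} we already know $\Gamma(S)=K^{\;\triangle(r_1,r_2,r_3)}_{m,n}$ for some parameters, and since it contains a $4$-cycle we must have $m,n\geq 2$ and $r_1,r_2\geq 1$ (the $4$-cycle sits inside the $\overline K^{\,r_1}_{m,n}$ part and forces $\mu,\nu\geq 2$; the two pendant-type families $r_1,r_2$ are the ones realized by the vertices $c$, $d$ relative to the triangle edge $a-e-b$). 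So the whole content is to show $r_3=0$, i.e.\ that the apex vertex $e$ of the unique $3$-cycle has \emph{no} pendant neighbours other than along the edges of that triangle.

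First I would fix notation matching $K^{\;\triangle(r_1,r_2,r_3)}_{m,n}$ with the description from Section 2: the triangle has vertices which I will call $a$, $b$, $e$ with $a-e-b-a$ being the unique $3$-cycle (so in the notation of that construction $a$ plays the role of the degree-$n$ vertex of $K_{m,n}$, $b$ the degree-$m$ vertex, and $e$ is the extra apex); the $r_3$ vertices $w_1,\dots,w_{r_3}$ are by definition the pendant vertices attached to $e$. I would then invoke Lemma \ref{thm:technical} in the configuration $a-b-c-d-a-e-b-a$ (relabeling so that its hypotheses apply: $a,b$ the common edge, $c,d$ the $4$-cycle vertices, $e$ the apex), which gives in particular $a+b=e$, $a^2=b^2=e^2=0$, and $ac=ec=fc=a$ for any pendant $f$ at $a$.

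The key step: suppose for contradiction $r_3\geq 1$, so there is a vertex $g$ with $g-e$ the only edge at $g$, i.e.\ $eg=0$ but $ag\neq 0$ and $bg\neq 0$ (since $g$ is pendant at $e$ and not adjacent to $a$ or $b$). Now consider the product $ag$. We have $(ag)e=a(ge)=0$ using $eg=0$, and also $(ag)g=g(ag)$, and $(ag)b=g(ab)$. Using $a+b=e$ we get $ab = ?$; more directly, from $a+b=e$ and $e^2=0$ we get $a^2+2ab+b^2=0$, hence $2ab=0$, and from $ae=a(a+b)=a^2+ab=ab$ together with $ae=0$ (since $a-e$ is an edge) we get $ab=0$. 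Thus $(ag)b = g\cdot ab = 0$. So $ag$ is a nonzero element annihilating both $e$ and $b$; since $a-e-b$ is the \emph{unique} $3$-cycle and $ag$ is adjacent to both $e$ and $b$, the only possibility is $ag\in\{a,e,b\}$ or $ag$ is a fourth common neighbour of $e$ and $b$ creating a new triangle — the latter is excluded, and $ag=e$ would give $0=e^2=e(ag)=(eg)a\cdot$... more carefully $ag=e$ forces $ag\cdot c = ec = a$ but $ag\cdot c = g(ac) = ga = ag \neq 0$, consistent, so I would instead rule out each of $a,b,e$ by multiplying against $c$ or $d$ exactly as in the proof of Lemma \ref{thm:technical}: e.g.\ $ag=a$ gives $ad = agd = g(ad)$; $ag=b$ gives $bc=agc=g(ac)=ga=ag=b$, but then $ac=(ag)c$ combined with... — the cleanest contradiction is that $ag$ being adjacent to $b$ and $e$ together with $ag$ adjacent to $g$ (since $g(ag)=a g^2$, need care) produces either a second $3$-cycle or contradicts $\deg g=1$. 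I expect the main obstacle to be precisely this bookkeeping: showing that $ag$ cannot be absorbed into any existing vertex without either creating a second triangle or forcing $g$ to have degree $\geq 2$, and doing so uniformly. The resolution is to note that $ag$ is adjacent to $b$, to $e$, and to $g$; the triangle $e-b-?$ is unique so $ag\notin\{$ new vertex $\}$; and checking $ag\in\{a,b,e,c,d\}$ against the known products from Lemma \ref{thm:technical} (e.g.\ $ac=a\neq$ anything adjacent to $g$) eliminates each case, the surviving one forcing $g-b$ or $g-e-$extra edges, contradiction. Hence $r_3=0$, and combined with $m,n\geq 2$, $r_1,r_2\geq 1$ from the presence of the $4$-cycle, we conclude $\Gamma(S)=K^{\;\triangle(r_1,r_2,0)}_{m,n}$ with $r_1,r_2\geq1$, $m,n\geq2$, as claimed.
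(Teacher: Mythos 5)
Your proposal has two genuine gaps, and both concern exactly the parts where the paper has to do real work beyond Proposition \ref{thm:characterizationg=3}.

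First, the claim that $r_1,r_2\geq 1$ ``follows from the presence of the $4$-cycle'' is not true: the vertices $c,d$ of the $4$-cycle lie inside the bipartite part $K_{m,n}$ and have degree at least $2$ there, so they are not the pendant vertices counted by $r_1,r_2$; graph-theoretically, $K^{\;\triangle(0,0,0)}_{2,2}$ (which is just $C_{4,3}$) contains a $4$-cycle and has $r_1=r_2=0$. Establishing $r_1,r_2\geq 1$ genuinely needs the semiring structure: the paper shows that the \emph{sums} $d+e$ and $e+c$ are nonzero, are not equal to any of $a,b,e$, do not annihilate $c$ (resp.\ the other side), and hence must be pendant vertices attached to $a$ (resp.\ $b$). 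Nothing in your write-up produces these pendant vertices.

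Second, your argument for $r_3=0$ is built on a plan that cannot close. You take a pendant $g$ at $e$, form $ag$, and try to reach a contradiction by ruling out every possible identity of $ag$; but the case $ag=a$ cannot be ruled out --- in the actual configuration one \emph{proves} $a e_\ell=a$ (since $(ae_\ell)b=(ae_\ell)e=(ae_\ell)d=(ae_\ell)a_i=0$ and there is only one $3$-cycle), and symmetrically $be_\ell=b$, and no contradiction arises at that stage. The contradiction only appears when you combine these identities with Lemma \ref{thm:technical}(4): from $a+b=e$ one gets $e=ae_\ell+be_\ell=(a+b)e_\ell=ee_\ell=0$, which is absurd. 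You cite the relation $a+b=e$ early on but never use it for this punchline; instead the proof trails off into an admittedly unfinished case analysis (``I expect the main obstacle to be precisely this bookkeeping\dots''), which is where the missing idea sits. So the key additive step that makes $r_3=0$ work, as well as the additive construction that forces $r_1,r_2\geq 1$, are both absent.
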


\begin{proof}
 If $\Gamma(S)$ contains a $k$-cycle for some $k \ge 4$ then it also contains $C_{4,3}$, 
  $a-b-c-d-a-e-b-a$ as an induced  subgraph by Lemma \ref{thm:girth=3}. 
By Proposition \ref{thm:characterizationg=3}, $\Gamma(S)=K^{\;  \triangle(r_1,r_2,r_3)}_{m,n}$ and
  let  $a_i, b_j,e_\ell \in S$ such that $\deg(a_i)=\deg(b_j)=\deg(e_\ell)=1$ 
  and $a-a_i$, $b-b_j$, $e-e_\ell$ are edges in $\Gamma(S)$ for  
  $i=1,2,\ldots,r_1$, $j=1,2,\ldots,r_2$ and  $\ell=1,2,\ldots,r_3$.

 Now, let us prove that $r_1\geq 1$. Consider the sum $d+e$ and observe that $d+e\ne 0$. Namely, if 
  $d+e=0$, we would have $ec=(e+d)c=0$. Since $(e+d)a=0$, we have few possibilities for $e+d$.
  Because $e+d\ne a$ (otherwise $db=(d+e)b=ab=0$), $e+d\ne b$ (otherwise $db=(d+e)b=b^2=0$
  by Lemma \ref{thm:technical} (1)),
  $e+d\ne e$ (otherwise $db=(d+e)b=eb=0$) and $c(d+e) \ne 0$ (otherwise $a=ce=c(d+e)=0$), 
  it follows that $d+e=a_i$ for some $i$.
  Similarly, we prove that $e+c=b_j$ for some $j$ and therefore $r_1,r_2 \geq 1$.

%
%
  Assume $r_3 \geq 1$ and consider an element $ae_\ell \ne 0$. 
  We have $(ae_\ell)b=(ae_\ell)e=(ae_\ell)d=(ae_\ell)a_i=0$
  and since  $\Gamma(S)$ contains only one 3-cycle, we have $ae_\ell=a$. Similarly we prove that 
  $be_\ell=b$.  Now, by Lemma \ref{thm:technical} (4) we have that 
  $e=a+b=ae_\ell+be_\ell=(a+b)e_\ell=ee_\ell=0$, which is a contradiction.
  Thus, $r_3=0$ and $\Gamma(S)=K^{\;  \triangle(r_1,r_2,0)}_{m,n}$.
\end{proof}

\bigskip
\begin{Corollary}\label{thm:AC34}
 If $S$ is an additively cancellative commutative semiring and $\Gamma(S)$ contains exactly one 3-cycle and at least one 
 $k$-cycle, $k \geq 4$,  then 
 $\Gamma(S)=K^{\;  \triangle(n-1,m-1,0)}_{m,n}$ with $m,n \geq 2$.
\end{Corollary}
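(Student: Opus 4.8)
The plan is to start from Theorem~\ref{thm:completecharacterizationg=3}, which already gives $\Gamma(S)=K^{\;\triangle(r_1,r_2,0)}_{m,n}$ with $r_1,r_2\geq 1$ and $m,n\geq 2$, so that the only remaining task is to show $r_1=n-1$ and $r_2=m-1$. I would fix notation to match the picture: let $M$ and $N$ be the two parts of the copy of $K_{m,n}$ inside $\Gamma(S)$, with $|M|=m$ and $|N|=n$; let $a\in M$ and $b\in N$ be the two base vertices of the unique $3$-cycle $a-b-e-a$, let $e$ be its apex, and let $P_a$ and $P_b$ be the sets of pendant vertices attached to $a$ and to $b$, so $|P_a|=r_1$ and $|P_b|=r_2$. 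Because $m,n\geq 2$, for every $c\in M\setminus\{a\}$ and every $d\in N\setminus\{b\}$ the subgraph induced on $\{a,b,c,d,e\}$ is exactly a $C_{4,3}$, so Lemma~\ref{thm:technical} applies to each such choice; I will use from it that $e^2=0$, that $ec=fc=a$ for all $c\in M\setminus\{a\}$ and all $f\in P_a$, that $ed=b$ for all $d\in N\setminus\{b\}$, and --- since $S$ is additively cancellative --- that $2a=2e=0$.

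The core of the argument is to build a bijection between $N\setminus\{b\}$ and $P_a$. I would take $\Phi\colon N\setminus\{b\}\to\Gamma(S)$, $\Phi(d)=d+e$, and $\Psi\colon P_a\to\Gamma(S)$, $\Psi(f)=f+e$, and prove three things. (i) $\Phi$ lands in $P_a$: for $d\in N\setminus\{b\}$ one computes with Lemma~\ref{thm:technical} that $(d+e)a=0$, that $(d+e)c=a\neq 0$ for all $c\in M\setminus\{a\}$, and that $(d+e)e=b\neq 0$, while also $d+e\neq 0$ (else $c(d+e)=0$ forces $ce=0$, contradicting $ce=a$); thus $d+e$ is a vertex adjacent to $a$ but to neither $e$ nor any $c\in M\setminus\{a\}$, and since in $K^{\;\triangle(r_1,r_2,0)}_{m,n}$ the neighbours of $a$ are exactly $N\cup\{e\}\cup P_a$ and every vertex of $N$ is adjacent to all of $M$, this puts $d+e$ in $P_a$. (ii) $\Psi$ lands in $N\setminus\{b\}$: for $f\in P_a$ one gets $(f+e)c=fc+ec=2a=0$ for every $c\in M$, while $(f+e)b=fb\neq 0$ and $(f+e)e=fe\neq 0$ (nonzero because $f$ is a pendant at $a$); hence $f+e$ is a vertex adjacent to all of $M$ but to neither $b$ nor $e$, which in $K^{\;\triangle(r_1,r_2,0)}_{m,n}$ forces $f+e\in N\setminus\{b\}$. (iii) Since $2e=0$, one has $\Psi(\Phi(d))=d+2e=d$ and $\Phi(\Psi(f))=f+2e=f$, so $\Phi$ and $\Psi$ are mutually inverse bijections and $r_1=|P_a|=|N\setminus\{b\}|=n-1$. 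Running the same argument with $a$ and $b$ (and $M$ and $N$) interchanged, which is legitimate since the $3$-cycle is symmetric in $a$ and $b$ and $r_2\geq 1$, yields $r_2=m-1$ and completes the proof.

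The step I expect to demand the most care is the verification of (i) and (ii): confirming that $d+e$ is genuinely a pendant at $a$ and that $f+e$ is genuinely a vertex of $N$ distinct from $b$. Each reduces to evaluating a small number of products $(\,\cdot\,)x$ with $x$ ranging over the vertex types of $K^{\;\triangle(r_1,r_2,0)}_{m,n}$ (the vertex $a$, the vertex $b$, a generic $M$-vertex, a generic $N$-vertex, a pendant) and then using the rigid adjacency pattern of that graph to locate the resulting element. Additive cancellativity enters twice in an essential way: it provides $2a=2e=0$ (needed both to annihilate $(f+e)c$ and to make $\Phi$ and $\Psi$ mutual inverses), and it makes $\Phi$ injective for free.
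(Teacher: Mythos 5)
Your proposal is correct and takes essentially the same route as the paper: both prove $r_1=n-1$ by pairing the vertices of $N\setminus\{b\}$ with the pendants at $a$ via $d\mapsto d+e$, using the identities of Lemma \ref{thm:technical} together with additive cancellativity, and then argue symmetrically for $r_2=m-1$. The only difference is cosmetic: you observe directly that $f\mapsto f+e$ is a two-sided inverse because $2e=0$, while the paper reaches the same conclusion slightly more circuitously by first showing $f+a$ equals some $d_j$ and then that $f+e=d_j+b$ equals some $d_k$.
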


\medskip

\begin{proof}
  By Theorem \ref{thm:completecharacterizationg=3} we know that $m,n\geq 2$, and denote 
  (as in the proof of the same
  theorem) by $a-b-c_{j}-d_{i}-a-e-b-a$ the induced subgraph of $\Gamma(S)$,
  let $d_1, d_2, \ldots, d_{n-1}, b$ and $c_1, c_2, \ldots, c_{m-1}, a$ be the partition of vertices
  of an induced complete bipartite subgraph of $\Gamma(S)$.
  By Lemma \ref{thm:technical} (1)  we have that $e^2=0$  and therefore $(d_i + e)a=0$, 
   $(d_i+e)e\ne 0$, $(d_i+e)b\ne 0$ and $(d_i+e)c_j\ne 0$ for all $i=1,2,\ldots,n-1$.
   Therefore, $\deg(d_i+e)=1$ and $(d_i+e)-a$ is an edge in $\Gamma(S)$. 
   Since $d_i+e \ne d_j+e$ for $i \ne j$, it follows that  $r_1 \geq n-1$.
  Similarly, we can see   $r_2 \geq m-1$.  
  
 Let $a-f$ be an edge in $\Gamma(S)$ and  $\deg{f}=1$. Using Lemma \ref{thm:technical}, observe
  that  $(f+a)a=0$,  $(f+a)c_i=fc+ac_i=a+a=0$ and $(f+a)e\ne 0$ and thus $f+a=d_{j}$ for some $j$.
 Now, $d_j+b=f+a+b=f+e$ and since $(d_j+b)a=(d_j+b)c_i=0$ for all $i$ and $S$ is additively 
 cancellative, it follows that  $d_{k}=d_j+b=f+e$.  By adding $e$ it follows that $d_k+e=f$, which proves that 
 $r_1=n-1$.  Similarly, $r_2=m-1$.
\end{proof}

\bigskip

Every additively cancellative semiring can be embedded into a ring of differences (see for example \cite[Thm.~5.~11]{HW}), 
but in case the zero--divisor graph of the additively cancellative semiring contains exactly one 3-cycle, we can 
prove that the semiring actually has to be a ring.  We will then study the zero--divisor graphs of rings in the next section. 

\bigskip

\begin{Proposition}\label{thm:ACsemi}
If $S$ is a commutative additively cancellative semiring  and $\Gamma(S)$ contains exactly one 3-cycle, 
then  $S$ is a ring.
\end{Proposition}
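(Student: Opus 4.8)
The plan is to show that every element of $S$ has an additive inverse, which (together with the given axioms) makes $S$ a ring. Since $S$ is additively cancellative, it suffices to produce, for each $s\in S$, some $s'$ with $s+s'=0$. We treat the zero--divisors first, using the rigid structure forced by having exactly one $3$-cycle, and then bootstrap to all of $S$.

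First I would fix notation for the unique $3$-cycle $a-e-b-a$ in $\Gamma(S)$ (it exists since $\Gamma(S)$ is cyclic with exactly one triangle, hence nonempty). By Lemma~\ref{thm:technical}, $a^2=b^2=e^2=0$, $a+b=e$, and in the additively cancellative case $2a=2b=2e=0$, $a+e=b$, $b+e=a$. So $a,b,e$ already have additive inverses (each is its own). Next I would handle an arbitrary zero--divisor $z\in Z(S)^*$: by Proposition~\ref{thm:characterizationg=3} (or Corollary~\ref{cor:triangle} in the acyclic-apart-from-the-triangle case, and Corollary~\ref{thm:AC34} when a $k$-cycle with $k\ge4$ is present), $\Gamma(S)$ is one of the graphs $K^{\triangle(r_1,r_2,r_3)}_{m,n}$, so every zero--divisor is within distance one of the triangle and its product with a suitable triangle vertex lands back in $\{a,b,e\}$ or is forced to be $0$. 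Concretely, for each such $z$ one shows $z+z$ (or $z+a$, $z+b$, $z+e$, according to which part of the graph $z$ lies in) is again a zero--divisor with a prescribed adjacency pattern, and the classification pins it down. The upshot I am aiming for is $2z=0$ for every zero--divisor $z$, i.e. every zero--divisor is its own additive inverse.

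The remaining, and I expect hardest, step is to pass from zero--divisors to arbitrary elements $s\in S$. The idea is: pick any edge $a-f$ of the triangle's attached leaves (or directly use $a$ itself with $a^2=0$), and consider $sa$. Since $a^2=0=ea=\dots$, the element $sa$ satisfies enough annihilation relations that either $sa=0$ or $sa\in\{a,b,e\}$ or $sa$ is a leaf vertex of the graph — in every case $2(sa)=0$, hence $2sa=0$. More usefully, consider $s+a$ and $s\cdot a$ together with additive cancellativity to manufacture a zero--divisor involving $s$; for instance, if $sa\ne 0$ then $sa$ is a zero--divisor, so $sa+sa=0=s(a+a)=s\cdot 0$, which is automatic and says nothing — so one must instead exploit that $a+a=0$ gives $s(a+a)=0$ and combine with the structure to force $2s$ to annihilate a witness, making $2s$ itself a zero--divisor or $0$; then $2(2s)=0$ and cancellativity yields $2s=0$, whence $s+s=0$ and $s$ has the additive inverse $s$. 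The main obstacle is verifying that the relation $2s=0$ really does propagate to elements $s$ that are \emph{not} zero--divisors and not adjacent to the triangle — here I would argue that $Z(S)$ generates enough of $S$ under the ring-like operations, or more cleanly, that $2s\cdot x = 0$ for a fixed nonzero $x\in Z(S)^*$ forces $2s$ to be a zero--divisor (or $0$), and then $2s$ is its own inverse, so $4s=0$, and finally additive cancellation collapses this to $2s=0$. Once $2s=0$ for all $s\in S$, every element is its own additive inverse and $S$ is a ring.
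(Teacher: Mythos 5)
Your strategy has a fatal flaw: the statement you are driving at, namely that $2z=0$ for every zero--divisor $z$ and ultimately $2s=0$ for every $s\in S$, is simply false under the hypotheses of the Proposition. The ring $\ZZ_{16}$ is an additively cancellative commutative semiring whose zero--divisor graph contains exactly one $3$-cycle (the triangle $4-8-12$; this is case (3) of Theorem \ref{thm:13-cycle}), yet $2\cdot 4=8\neq 0$, so even a vertex of the unique triangle need not be $2$-torsion, and $\ZZ_{16}$ certainly does not have characteristic $2$. Likewise $\ZZ_4\times\ZZ_4$ satisfies the hypotheses and has characteristic $4$, and its leaves such as $(1,2)$ satisfy $2(1,2)=(2,0)\neq 0$. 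A related problem is your appeal to Lemma \ref{thm:technical}(5): that lemma is proved only under the additional assumption that $\Gamma(S)$ also contains a $k$-cycle with $k\geq 4$ (so that the configuration $C_{4,3}$ is present), so you cannot use it in the case $\Gamma(S)=K^{\;\triangle(r_1,r_2,r_3)}_{1,1}$, which is exactly where $\ZZ_{16}$ lives. The subsequent ``bootstrap'' from zero--divisors to arbitrary elements is only sketched (``combine with the structure to force \dots''), but no repair of that sketch can succeed, because the conclusion it targets is contradicted by the examples above.

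What the paper proves instead is much weaker and suffices: some positive integer multiple of $1$ vanishes. With $a-b-e-a$ the unique triangle, each of $2a,3a,4a$ annihilates $b$ and $e$, so each lies in $\{0,b,e\}$ (it cannot equal $a$ by cancellativity, and it cannot be a new vertex without creating a second triangle); hence either one of them is $0$ or two coincide, and cancellation then gives $na=0$ for some $n>0$. Similarly $mb=0$ and $re=0$. Then the element $nm\cdot 1$ annihilates both $a$ and $b$, so $nm\cdot 1\in\{0,a,b,e\}$, and in every case one deduces $N\cdot 1=0$ for some integer $N>0$; consequently $-x=(N-1)x$ exists for every $x\in S$, so $S$ is a ring. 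If you want to salvage your write-up, replace the goal ``characteristic $2$'' by ``finite additive order of $1$'' and run this pigeonhole argument on integer multiples of the triangle vertices.
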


\medskip
\begin{proof}
  Denote the only 3-cycle in $\Gamma(S)$ by $a-b-e-a$.  Now $(2a)b=(2a)e=0$, so $2a \in Z(S)$ and 
  $2a \in \{0,b,e\}$, since $2a=a$ implies that $a=0$.  Similarly, we can show that $3a \in \{0,b,e\}$
  and $4a \in \{0,b,e\}$. So, either at least one of $2a,3a,4a$ is equal to zero or at least two of 
  $2a,3a,4a$ coincide.  Since $S$ is additively cancellative, it follows that in all cases there exists an
  integer $n > 0$ such that $na=0$.  Similarly, we can also show that $mb=re=0$ for some integers $m, r > 0$.
  This implies that $(nm)a=(mn)b=0$, so $nm \in Z(S)$ and $nm \in \{0, a, b ,e\}$. In each case we get
  that $N=0$ for some integer $N > 0$, so for every $x \in S$ we have $-x = (N-1)x \in S$, therefore 
  $S$ is a ring.
\end{proof}

\bigskip

The following example shows that in case $S$ is not additively cancellative,  the zero--divisor graph
$\Gamma(S)=K^{\;  \triangle(r_1,r_2,0)}_{m,n}$ need not have $r_1=n-1$ and $r_2=m-1$.

\bigskip 
 \begin{Example}\label{ex:Kmnr1r2}
 Let $S \subseteq \mm_2(\BB) \times \mm_2(\BB) \times \mm_2(\BB)$ be a commutative semiring, 
  generated by
         \begin{center}
     \begin{tabular}{lclcl}
           $a=(J_2,0,0)$&\qquad \qquad&           $b=(0,0,J_2)$  &\qquad \qquad&     $c=(I_2+J_2,0,0)$  \\
        $d=(0,0,I_2+J_2)$&&    $f=(J_2,J_2,I_2+J_2)$ &\qquad&$1=(I_2,I_2,I_2)$
         \end{tabular}
    \end{center}
  Observe that $Z(S)^*=\{a,b,c,d,a+b,f,a+d,b+c\}$ 
  and that $\Gamma(S)=K^{\;  \triangle(2,1,0)}_{2,2}$.
  \hfill$\square$
 
 \end{Example}

\bigskip

\bigskip

\section{Commutative rings having zero--divisor graphs with one 3-cycle}

\bigskip

We now know the types of graphs that can appear as the zero--divisor graphs of semirings.  However, the setting appears to be too 
general to allow for a classification of the structure of semirings that have these types of zero divisor graphs.
We will  characterize all commutative rings (with identity), such that their
zero divisor graphs contain exactly one 3-cycle.

For an arbitrary ring $R$, let $T_2(R)$ denote the ring of all 
 matrices  of the form $aI_2+bJ_2$, where $a,b \in R$.
 
 \bigskip

\begin{Lemma}\label{thm:times}
If $S=S_1\times S_2$ is an additively cancellative commutative semiring such that $\Gamma(S)$ 
contains exactly one 3-cycle, then  $S_1,S_2\in \{\ZZ_4,T_2(\ZZ_2)\}$ are rings and
$\Gamma(S)=K^{\;  \triangle(2,2,0)}_{3,3}$.
\end{Lemma}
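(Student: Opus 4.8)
The plan is to reduce everything to finite ring theory. Since $S$ is additively cancellative and $\Gamma(S)$ has exactly one $3$-cycle, Proposition~\ref{thm:ACsemi} gives that $S$ is a ring, hence so are the factors $S_1,S_2$ (project additive inverses coordinatewise). By Proposition~\ref{thm:characterizationg=3}, $\Gamma(S)=K^{\;\triangle(r_1,r_2,r_3)}_{m,n}$, which is a finite graph, so $Z(S)^*$ is finite; since $(s,0)(0,1)=0$ and $(0,t)(1,0)=0$ we have $S_1\times\{0\},\{0\}\times S_2\subseteq Z(S)$, which forces $S_1$ and $S_2$ to be finite (both assumed nontrivial, else the statement is vacuous).

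Next I would show each $\Gamma(S_i)$ has no edges. Suppose $a-b$ is an edge of $\Gamma(S_1)$. Then $\{(a,0),(b,0),(0,s)\}$ is a $3$-cycle of $\Gamma(S)$ for every nonzero $s\in S_2$, so by uniqueness of the $3$-cycle $S_2\cong\ZZ_2$; moreover the same uniqueness applied to $\{(a,0),(b,0),(a,1)\}$ and $\{(a,0),(b,0),(b,1)\}$, and to any further edge $c-c'$ of $\Gamma(S_1)$ with $\{c,c'\}\ne\{a,b\}$, forces $\Gamma(S_1)=K_2$ with $a^2\ne0\ne b^2$; then $a,b$ are orthogonal idempotents, $a+b=1$, and the two Peirce components of $S_1$ are finite fields of size $2$, so $S\cong\ZZ_2^3$ and $\Gamma(S)=K^{\;\triangle(1,1,1)}_{1,1}$ has no cycle of length $\ge4$. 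This last conclusion is impossible in the setting where this lemma is used, namely when $\Gamma(S)$ also contains a $k$-cycle with $k\ge4$ (cf.\ Corollary~\ref{thm:AC34}); so $\Gamma(S_1)$, and likewise $\Gamma(S_2)$, is edgeless. As the zero-divisor graph of a ring it is connected (Theorem~\ref{thm:diam3}), hence equals $\emptyset$ or $K_1$: each $S_i$ is either an integral domain or has a single nonzero zero-divisor $z_i$, which then satisfies $z_i^2=0$.

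Then I would eliminate the domain cases. If both $S_i$ are domains then, as in the proof of Proposition~\ref{thm:SxS}, $\Gamma(S)$ is complete bipartite and triangle-free; if, say, $S_2$ is a domain and $S_1$ is not, then in any triangle the three second coordinates pairwise annihilate in $S_2$, so at least two of them vanish, say $Q=(q,0),R=(r,0)$ with $q,r\in Z(S_1)^*=\{z_1\}$, forcing $Q=R$ — again impossible. Hence each $S_i$ is a finite ring with a unique nonzero zero-divisor $z_i$ and $z_i^2=0$. Such a ring has no nontrivial idempotent $\varepsilon$ (otherwise $\varepsilon$ and $1-\varepsilon$ would be two distinct nonzero zero-divisors), so it is local; in a finite ring the non-units coincide with the zero-divisors, so its maximal ideal is $\mathfrak m_i=\{0,z_i\}=(z_i)$, and $\mathfrak m_i^2=(z_i^2)=0$. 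Thus $\mathfrak m_i$ is a one-dimensional vector space over $S_i/\mathfrak m_i$ of cardinality $2$, whence $S_i/\mathfrak m_i\cong\FF_2$ and $|S_i|=4$; the only commutative rings of order $4$ with this property are $\ZZ_4$ and $T_2(\ZZ_2)\cong\ZZ_2[X]/(X^2)$.

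Finally, with $S_1,S_2\in\{\ZZ_4,T_2(\ZZ_2)\}$ and using $z_iu=z_i$ for every unit $u$ of $S_i$, I would read off $\Gamma(S)$ by a direct neighborhood computation in $S_1\times S_2$: the vertices $(z_1,0)$ and $(0,z_2)$ each have degree $6$ and are adjacent, $(z_1,z_2)$ has degree $2$ with neighbors exactly $(z_1,0),(0,z_2)$, the elements $(z_1,w)$ (resp.\ $(v,z_2)$) with $w,v$ units are pendants at $(z_1,0)$ (resp.\ $(0,z_2)$), and $\{(z_1,0)\}\cup(U(S_1)\times\{0\})$ against $\{(0,z_2)\}\cup(\{0\}\times U(S_2))$ induces $K_{3,3}$; altogether $\Gamma(S)=K^{\;\triangle(2,2,0)}_{3,3}$. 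The step I expect to be the real obstacle is the second one — ruling out that some $\Gamma(S_i)$ has an edge — since for the bare hypothesis this genuinely fails ($\ZZ_2\times\ZZ_2^2$ has a unique $3$-cycle), and it is precisely there that one must invoke that $\Gamma(S)$ contains a cycle of length $\ge4$.
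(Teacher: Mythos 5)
Your suspicion about the second step is exactly right, and it is not a defect of your argument but an error in the statement itself: $\ZZ_2\times(\ZZ_2\times\ZZ_2)$ is an additively cancellative commutative semiring (indeed a ring) whose zero--divisor graph is $K^{\;\triangle(1,1,1)}_{1,1}$ (the triangle $e_1-e_2-e_3$ with one pendant attached to each vertex), so it contains exactly one $3$-cycle, yet neither factor lies in $\{\ZZ_4,T_2(\ZZ_2)\}$. The paper's own proof breaks precisely where you predicted: in its first paragraph, when all three cycle vertices have a zero component and $a^2\neq 0$, the claimed ``other'' $3$-cycle $(b,0)-(a^2,0)-(0,c)-(b,0)$ coincides with the original one when $a^2=a$ (and the case $a^2=b$ has to be rerouted through $b^2=0$), which is exactly the idempotent configuration realized in $\ZZ_2^3$. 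Your proof of the amended statement (assuming in addition a $k$-cycle, $k\geq 4$, which is how the lemma is applied in Proposition \ref{thm:ring34}) is correct and takes a genuinely different route: the paper works directly on the unique triangle, shows some cycle vertex has both coordinates nonzero, deduces $Z(S_i)=\{0,z_i\}$ with $z_i^2=0$, and then uses additive cancellativity alone to pin down $S_i=\{0,1,z_i,1+z_i\}$ — no finiteness, no local-ring theory, and no prior reduction to rings — whereas you pass to finite ring theory via Proposition \ref{thm:ACsemi}, show each $\Gamma(S_i)$ is edgeless, and classify the factors as local rings of order $4$; you also verify the final graph $K^{\;\triangle(2,2,0)}_{3,3}$ explicitly, which the paper leaves implicit. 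Two small polish points: your finiteness step tacitly reads the parameters in Proposition \ref{thm:characterizationg=3} as finite, and the assertions $a^2=a$, $b^2=b$, $a+b=1$ and ``Peirce components of size $2$'' in the $\ZZ_2^3$ branch deserve their short verifications.

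One caveat about the repair. Restricting the lemma to the presence of a $4$-cycle covers its use in Proposition \ref{thm:ring34}, but the lemma is invoked a second time, inside the proof of Theorem \ref{thm:13-cycle}, in the case $\Gamma(R)=K^{\;\triangle(r_1,r_2,r_3)}_{1,1}$ where no cycle of length $\geq 4$ is available (once to derive a contradiction from the idempotent $b$, and once to conclude that $R$ is directly indecomposable). There the $\ZZ_2^3$ possibility is not excluded by your extra hypothesis, and in fact $\ZZ_2\times\ZZ_2\times\ZZ_2$, with graph $K^{\;\triangle(1,1,1)}_{1,1}$, is absent from the list in Theorem \ref{thm:13-cycle}. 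So what you found is an erratum propagating into that theorem rather than a gap in your reasoning: a correct version of the lemma must either admit the exceptional conclusion $S\simeq\ZZ_2^3$ with $\Gamma(S)=K^{\;\triangle(1,1,1)}_{1,1}$, or carry your added hypothesis together with a corresponding adjustment downstream.
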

\medskip
\begin{proof}
Suppose firstly that all 3 vertices $(x_1,y_1)$, $(x_2,y_2)$, $(x_3,y_3)$ on the 3-cycle have one 
component equal to 0. Since $\Gamma(S)$  contains exactly one 3-cycle, it follows that
at least one of $x_1,x_2,x_3$ is nonzero and at least one of $y_1,y_2,y_3$ is nonzero.
So, let us assume that $(a,0)-(b,0)-(0,c)-(a,0)$ is a 3-cycle.
If $a^2=0$, then  $(a,0)-(a,c)-(b,0)-(a,0)$ is another 3-cycle in the graph and if $a^2\ne0$, then  
$(b,0)-(a^2,0)-(0,c)-(b,0)$ is another 3-cycle in the graph, contradiction. 
 
  Let $(a,b)$, $(a_2,b_2)$ and  $(a_3,b_3)$ be the vertices on the 3-cycle and suppose that 
  $a, b \neq 0$.  Since $(a,0)$ and $(0,b)$ are also zero divisors, we have 
  $\{(a_2,b_2), (a_3,b_3)\} = \{(a,0), (0,b)\}$ and $a^2=b^2=0$.
  If there exists $x \in Z(S_1)$, $x\notin \{0,a\}$, then $xy=0$ for some $y \in Z(S_1)^*$, so either
  $(x,0)-(y,0)-(0,b)-(x,0)$ is another 3-cycle (if $x\ne y$) or $(x,0)-(x,b)-(0,b)-(x,0)$ is another 3-cycle (if $x=y$).
  Thus, $Z(S_1)=\{0,a\}$ and $Z(S_2)=\{0,b\}$. Since $2a\in Z(S_1)$ and $S_1$ is additively 
  cancellative, it follows that $2a=0$.   
  Now, choose an $x \in S_1\backslash Z(S_1)$. Note that $xa \in Z(S_1)$ implies $xa=a$. 
  Since $(2x)a=0$, it follows that $2x=0$ or $2x=a$. 
  Also, $(x+1)a=0$, so either $x+1=0$ or $x+1=a$.  By adding either $x$ or $x+a$ to these equations, 
  we can conclude that $x=1$ or $x=1+a$. Thus we proved that $S_1=\{0,1,a,1+a\}$. Since 
  $a^2=2a=0$ and we either have $1+1=0$ or $1+1=a$, it follows that either $S_1\simeq T_2(\ZZ_2)$ 
  (via mapping $1\mapsto I_2$, $a \mapsto E_{1,2}$) or 
  $S_1\simeq \ZZ_4$.   Similarly,  we show that  $S_2\in \{\ZZ_4,T_2(\ZZ_2)\}$.  
\end{proof}

\bigskip

In the case $S$ is a ring, 
the following Proposition shows that the assumption that $S$ is a direct product is actually superfluous.

\bigskip

\begin{Proposition}\label{thm:ring34}
Let $R$ be a commutative ring with identity such that $\Gamma(R)$ contains exactly one 3-cycle and at least one 
 $k$-cycle, $k \geq 4$,  then   $R$ is isomorphic to a direct product $R_1\times R_2$, where 
 $R_1,R_2\in \{\ZZ_4,T_2(\ZZ_2)\}$ and  $\Gamma(R)=K^{\;  \triangle(2,2,0)}_{3,3}$.
\end{Proposition}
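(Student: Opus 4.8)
\textbf{Proof plan for Proposition \ref{thm:ring34}.}

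The plan is to bootstrap from Corollary \ref{thm:AC34} and Lemma \ref{thm:times}. Since $R$ is a commutative ring, it is in particular an additively cancellative commutative semiring, so by Corollary \ref{thm:AC34} we already know $\Gamma(R)=K^{\;\triangle(n-1,m-1,0)}_{m,n}$ with $m,n\ge 2$. Thus the only thing to prove is that $R$ decomposes as a direct product of two rings from $\{\ZZ_4,T_2(\ZZ_2)\}$ and that $m=n=3$. By Lemma \ref{thm:times}, it suffices to show that $R$ is (isomorphic to) a nontrivial direct product $R_1\times R_2$, since that Lemma then pins down the factors and forces $\Gamma(R)=K^{\;\triangle(2,2,0)}_{3,3}$.

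So the heart of the argument is to produce a nontrivial idempotent in $R$, or equivalently a direct product decomposition. Here I would use the standard ring-theoretic machinery available because $R$ is now genuinely a ring: fix the induced subgraph $a-b-c-d-a-e-b-a$ given by Lemma \ref{thm:girth=3}, with the algebraic relations from Lemma \ref{thm:technical} — namely $a^2=b^2=e^2=0$, $a+b=e$, $2a=2b=2e=0$, $ac=ec=a$, $bd=ed=b$, and (using additive cancellativity) $a+e=b$, $b+e=a$. The first step is to show $c$ (or rather a suitable modification of $c$) behaves like a projection on the $a$-side: from $ac=a$ and $bc=0$ one expects $c^2$ to again satisfy $ac^2=a$, $bc^2=0$, and by examining the graph $\Gamma(R)$ one narrows down the possibilities for $c^2$, $c^2-c$, etc. Concretely I would look at the element $cd$: we have $a(cd)=ac\cdot d = ad$, but $ad=0$ (since $a-d$ is an edge), and similarly $b(cd)=bc\cdot d=0$; combined with $(cd)e = (ac)d\cdot$-type relations and the "only one 3-cycle" hypothesis, $cd$ is forced to be $0$ or one of the triangle vertices, which quickly yields a relation like $cd=0$. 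Then $c+d$ (or $1-c-d$, or $cd$ once we know it is idempotent-like) is a candidate idempotent: one checks $(c+d)a$, $(c+d)b$, $(c+d)e$ and uses that $\deg$ constraints in $K^{\;\triangle(n-1,m-1,0)}_{m,n}$ leave no room, forcing $(c+d)$ — or the genuine idempotent extracted from the finite commutative ring $\ZZ[c,d]$, which is Artinian once we know $R$ is finite — to be a nontrivial central idempotent $f$ with $f\ne 0,1$. Writing $R=fR\times(1-f)R$ then gives the direct product, and invoking Lemma \ref{thm:times} finishes everything, including $m=n=3$.

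The main obstacle I anticipate is the extraction of the idempotent: a priori $R$ need not be finite, and even if it is, producing an honest idempotent rather than just a nilpotent-adjacent element requires care. One clean route is to first argue $R$ is finite: by Proposition \ref{thm:ACsemi} (or its proof) $NR=0$ for some integer $N$, so $R$ is a $\ZZ/N\ZZ$-algebra; then show that every zero divisor is killed by $2$ (from Lemma \ref{thm:technical}(5), extended along the bipartite parts as in the proof of Corollary \ref{thm:AC34}, every vertex $d_i+e$ etc. has order dividing $2$), so the relevant part of $R$ is a finite-dimensional $\ZZ_2$- or $\ZZ_4$-algebra, hence Artinian, hence a product of local rings; finally the graph structure $K^{\;\triangle(2,2,0)}_{m,n}$ with exactly one triangle forces exactly two local factors, each with two nonzero zero divisors arranged so that Lemma \ref{thm:times}'s local analysis applies and identifies them as $\ZZ_4$ or $T_2(\ZZ_2)$. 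Assembling the finiteness/Artinian reduction cleanly, and checking that no "larger" local ring can occur (which is essentially the content already handled inside Lemma \ref{thm:times}), is where the real work lies; the rest is bookkeeping with the relations of Lemma \ref{thm:technical}.
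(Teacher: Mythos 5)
Your high-level skeleton matches the paper's (reduce via Corollary \ref{thm:AC34}, manufacture a nontrivial idempotent, split $R\simeq Rc\times R(1-c)$, and let Lemma \ref{thm:times} do the rest), but the step that carries all the weight --- actually producing the idempotent --- is not established, and the concrete mechanisms you propose do not work. First, the computation around $cd$ is vacuous: $c$ and $d$ are adjacent vertices of the $4$-cycle $a-b-c-d-a$, so $cd=0$ by definition and yields no new relation. Second, $c+d$ (or $1-c-d$) is not a viable idempotent candidate: in the target ring $\ZZ_4\times\ZZ_4$ one may take $c=(3,0)$, $d=(0,3)$, and then $c+d=(3,3)$ is a unit with $(c+d)^2=1\ne c+d$ (and for $c=(1,0)$, $d=(0,1)$ one gets $c+d=1$, so $1-c-d=0$); so ``checking $(c+d)a$, $(c+d)b$, $(c+d)e$'' cannot force idempotency. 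Third, the fallback via ``extract an idempotent from the Artinian ring'' rests on an unjustified finiteness claim: that $R$ is a $\ZZ/N\ZZ$-algebra whose zero divisors are killed by $2$ does not make it finite-dimensional or Artinian, and even granting finiteness you would still have to show the relevant element is a non-nilpotent non-unit (equivalently, that $R$ is not local), which is exactly the crux being assumed. Nothing in your sketch bounds $m,n$ either, which is what the paper uses to make $Z(R)$ finite in the first place.

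For comparison, the paper closes this gap with a short, purely relational argument. Writing $c_1,\dots,c_{m-1},a$ and $d_1,\dots,d_{n-1},b$ for the bipartition, Lemma \ref{thm:technical} gives $ec_i=a$, hence $(c_i-c_j)e=(c_i-c_j)b=(c_i-c_j)d_k=0$, forcing $c_i-c_j=a$ for $i\ne j$ and therefore $m,n\le 3$. Next $c_i^2\ne 0$ (else $b+c_i$ would be a vertex creating a second triangle), and $c_ic_j\ne a$ (else $c_ia=c_ic_je=ae=0$), so $c_ic_j=c_k$; a two-line case analysis for $m=2$ and $m=3$ then exhibits an idempotent among the $c_i$, after which $R\simeq Rc\times R(1-c)$ and Lemma \ref{thm:times} give the factors and $\Gamma(R)=K^{\;\triangle(2,2,0)}_{3,3}$. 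A salvageable version of your idea does exist --- since $ac=a$, no power of $c$ vanishes, so once finiteness is genuinely established some power of $c$ is a nontrivial idempotent --- but both the finiteness and the non-nilpotence must be argued along the lines above, so as written your proposal has a genuine gap at its central step.
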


\medskip

\begin{proof}
  By  Corollary \ref{thm:AC34} it follows that $\Gamma(R)=K^{\;  \triangle(n-1,m-1,0)}_{m,n}$
  with  $m,n\geq 2$. Denote by $a-b-c_{i}-d_{j}-a-e-b-a$ the induced subgraph of $\Gamma(R)$ where
   $d_1, d_2, \ldots, d_{n-1}, b$ and $c_1, c_2, \ldots, c_{m-1}, a$ is the partition of vertices
  of an induced complete bipartite subgraph of $\Gamma(R)$.
  
  By Lemma \ref{thm:technical} we know that $e c_i=a$ and therefore 
  $(c_i-c_j)e=(c_i-c_j)b=(c_i-c_j)d_k=0$ for all $i,j,k$. Thus $c_i-c_j=a$ for all $i\ne j$ and $m,n \leq 3$.
  
 Observe that $(b+c_i)b=0$ and $c_i^2 \ne 0$, because  $c_i^2=0$ yields $(b+c_i)c_i=0$ and since 
 $b+c_i\ne 0$, $b+c_i\ne b$, $b+c_i \ne c_i$ and $b+c_i \ne d_j$ (otherwise, $ac_i=a(b+c_i)=ad_j=0$), 
 we get a contradiction. Since $c_ic_j =a$ together with Lemma  
 \ref{thm:technical}  implies that $c_ia=c_ic_je=ae=0$, it follows that $c_ic_j=c_k$ for some $k$.
  If $m=2$, then $c_1^2=c_1$. Suppose that $m=3$ and assume without loss of generality that 
  $c_1c_2=c_1$. If $c_2$ is not idempotent, i.e., $c_2^2=c_1$, it follows that $c_1^2=c_1c_2^2=c_1c_2=c_1$
  and therefore $c_1$ is idempotent. We proved that $R$ in all cases contains an idempotent $c$. 
  Then, $1=c+(1-c)$ is an orthogonal decomposition of identity, thus $R\simeq Rc \times R(1-c)$.
  Now, the proposition follows by Lemma \ref{thm:times}.
\end{proof}

\bigskip

It remains for us to investigate the zero--divisor graphs with girth equal to 3, containing exactly one cycle. 

\bigskip

\begin{Lemma}\label{thm:K3}
If $R$ is a commutative ring with identity and $\Gamma(R)=K_3$,
then $R$ is isomorphic to one of the following rings: $T_2(\GF{4})$,
 $ \ZZ_4[x] / (x^2+x+1)$,
 $\ZZ_2[x,y]/(x^2,xy,y^2)$, or $\ZZ_4[x]/(2x,x^2)$.
\end{Lemma}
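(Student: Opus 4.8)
The plan is to classify all commutative rings $R$ with $\Gamma(R)=K_3$ by working directly with the three zero--divisors, exploiting Example \ref{ex:Kn} as the starting point. By Example \ref{ex:Kn}, since $\Gamma(R)=K_3$ and we are in the ring case, the Anderson--Livingston result quoted there applies: all three zero--divisors are nilpotent of order 2. So write $Z(R)^*=\{x,y,z\}$ with $x^2=y^2=z^2=0$ and $xy=yz=zx=0$. The nilradical $\nn(R)$ consists of $0$ together with zero--divisors, hence $\nn(R)\subseteq\{0,x,y,z\}$, and since each pairwise product is zero, $\nn(R)$ is closed under addition only if sums like $x+y$ lie in $\{0,x,y,z\}$; in fact each of $x+y,y+z,x+z$ annihilates every element of $Z(R)^*$, so each either is $0$ or lies in $Z(R)^*$. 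The first main step is to pin down the additive relations among $x,y,z$: I would show that the characteristic considerations force $2x,2y,2z$ to be zero or zero--divisors, and combine this with the sum analysis to distinguish the split case ($R$ decomposes as a product, handled by Lemma \ref{thm:times}-type reasoning, giving $T_2(\GF{4})$ as $T_2(\ZZ_2)\times\ZZ_2$ is too small — actually $|Z(R)^*|=3$ forces $R$ itself to be local or a very small product) from the local case.

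More concretely, I expect the dichotomy to be: either $\{0,x,y,z\}$ is already an ideal (equivalently closed under addition, so $x+y=z$ after relabelling, up to the three elements being "linearly dependent mod something"), giving $\nn(R)=\{0,x,y,z\}$ a $2$-dimensional $\ZZ_2$-space and $R/\nn(R)$ a field; or the three elements are "independent" in the sense that their pairwise sums vanish, forcing $2x=2y=2z=0$ and a different structure. In the first sub-case, $R$ is local with maximal ideal $\mm=\nn(R)$ of order $4$, $\mm^2=0$, and $R/\mm$ is a field; counting $|R|$ and using $\mm^2=0$ shows $R/\mm\cong\GF{2}$ or $\GF{4}$. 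If $R/\mm\cong\GF{4}$ then $|R|=8$... I would track this carefully: $\mm$ is a $1$-dimensional $R/\mm$-vector space when $R/\mm=\GF{4}$ (since $|\mm|=4$), giving $R=T_2(\GF{4})$ or a characteristic-$4$ analogue $\ZZ_4[x]/(x^2+x+1)$ depending on whether $2=0$ in $R$. If $R/\mm\cong\GF{2}$, then $\mm/\mm^2=\mm$ has $\ZZ_2$-dimension $2$, and $R$ is generated over its prime ring by two elements killing each other; chasing whether the prime ring is $\ZZ_2$ or $\ZZ_4$ yields $\ZZ_2[x,y]/(x^2,xy,y^2)$ (when $\mathrm{char}\,R=2$) and $\ZZ_4[x]/(2x,x^2)$ (when $\mathrm{char}\,R=4$, here $2$ is itself one of the nonzero nilpotents).

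The key steps in order: (1) invoke Example \ref{ex:Kn} to get $x^2=y^2=z^2=0$ and $xy=yz=zx=0$; (2) analyze the sums $x+y$, $y+z$, $z+x$ and the doublings $2x,2y,2z$, each constrained to lie in $\{0,x,y,z\}$, to show $R$ is local with $\mm=\nn(R)=\{0,x,y,z\}$ and $\mm^2=0$ (ruling out any product decomposition since $|Z(R)^*|$ is too small for the $K^{\triangle}$ graphs and $\mm^2=0$); (3) identify the residue field $F=R/\mm$ as $\GF{2}$ or $\GF{4}$ by noting $\mm$ is an $F$-module of order $4$; (4) split on $\mathrm{char}\,R\in\{2,4\}$ — equivalently on whether $2\in\mm$ is zero or one of $x,y,z$ — and in each of the four resulting cases exhibit an explicit presentation, checking it is forced by the multiplication table and additive structure already determined.

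The main obstacle I anticipate is step (2): carefully ruling out that $R$ is a nontrivial direct product and showing $\nn(R)$ is exactly all of $\{0,x,y,z\}$ and forms an ideal with square zero. One has to be careful that a priori $\nn(R)$ could be a proper subset (say $\{0,x\}$) with $y,z$ being non-nilpotent zero--divisors; but $y^2=0$ from step (1) already forecloses that, so in fact step (1) does most of the heavy lifting, and the remaining delicacy is the bookkeeping of characteristic and the $F$-module structure of $\mm$ when $F=\GF{4}$, where one must verify that the two characteristic-$4$ and characteristic-$2$ options genuinely give the nonisomorphic rings $T_2(\GF{4})$ and $\ZZ_4[x]/(x^2+x+1)$ respectively, rather than collapsing. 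I would close by verifying each of the four listed rings indeed has $\Gamma=K_3$, which is a quick direct check.
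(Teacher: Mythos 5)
Your proposal is correct in outline but takes a genuinely different route from the paper's proof. The paper works directly with $Z(R)=\{0,a,b,e\}$ and splits on whether some $f\in R$ satisfies $fa\in\{b,e\}$: in that case it shows $aR=Z(R)$, deduces $|R|=16$ from $Ra\cong R/\Ann(a)$ with $\Ann(a)=Z(R)$, and quotes Raghavendran's classification to obtain $T_2(\GF{4})$ and $\ZZ_4[x]/(x^2+x+1)$; in the remaining case $(1-f)a=0$ for every non-zero-divisor $f$, so $R=\{0,1,a,b,e,1+a,1+b,1+e\}$ has order $8$ and the answer is read off the Corbas--Williams list. You instead start from the Anderson--Livingston fact quoted in Example \ref{ex:Kn} (all three zero-divisors square to zero), show $Z(R)$ is an ideal $\mm$ of order $4$ with $\mm^2=0$ and $\Ann(x)=\mm$, so $Rx\cong R/\mm$ embeds in $\mm$, forcing $R$ finite and local with residue field $\GF{2}$ or $\GF{4}$, and then derive presentations case by case on residue field and characteristic. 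The two case splits are parallel (residue field $\GF{4}$ versus $\GF{2}$, i.e. $|R|=16$ versus $8$); what your version buys is self-containedness, with no appeal to classification tables, at the price of actually carrying out the identification in the $\GF{4}$ case, which you only flag: there you must prove that characteristic $2$ forces $R\cong \GF{4}[t]/(t^2)=T_2(\GF{4})$ and characteristic $4$ forces $R\cong\ZZ_4[x]/(x^2+x+1)$, e.g.\ by lifting a unit $u$ of multiplicative order $3$ and noting $u^2+u+1=0$ since $u-1$ is a unit, which produces the coefficient subring $\GF{4}$ resp.\ $\ZZ_4[u]$; this is exactly the step the paper outsources to Raghavendran. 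Two small slips do not affect the outcome: your ``dichotomy'' on additive relations is not one, since $\{0,x,y,z\}$ is always closed under addition (each pairwise sum annihilates all of $Z(R)^*$), and once $2\in\mm$ and $\mm^2=0$ give $2x=2y=2z=0$ each pairwise sum must equal the third vertex; and the parenthetical about $T_2(\ZZ_2)\times\ZZ_2$ is garbled, though the intended conclusion (no nontrivial product decomposition) is immediate because every zero-divisor is nilpotent, so there are no nontrivial idempotent zero-divisors.
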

\medskip
\begin{proof}
  If $\Gamma(R)=K_3$, then let $Z(R)=\{0,a,b,e\}$. Suppose there exists $f \in R$ such that 
  $fa=b$ or $fa=e$. Without loss of generality, we can assume that $fa=b$.
  Then $(f+1)a=b+a$ and $a+b\in Z(R)$.  If $a+b=e$, then $aR=Z(R)$. Otherwise, since 
  $R$ is a ring, $a+b \notin \{a,b\}$, so $a+b=0$. Then $b=-a$, and $(1-f)a=a-b=2a \in Z(R)$.
  Note that $2a=0$ implies that $b=-a=a$ and $2a=a$ implies that $a=0$.
  So, consider the case $2a=b=-a$. Since $a+e\in Z(R)$ and is obviously not equal to $0,a,e$,
  we have  $a+e=b=-a$ and thus $e=-2a=a$, s contradiction. Therefore, $2a=e$ and again 
  $aR=Z(R)$.
  
   Since $R$-module $Ra$ is isomorphic to $_R R/\Ann(a)$,
  and $\Ann(a)=Z(R)$, we have that $|R|=16$. By \cite[Thm.~12]{ragh} it follows that
  $R  \simeq T_2(\GF{4})$,  $R \simeq \ZZ_4[x] / (x^2+x+1)$.

  In the remaining case we have that $fa=a$  therefore $(1-f)a=0$ for all $f\in R\backslash Z(R)$.
  We thus have $1-f \in Z(R)$ and therefore $R=\{0,1,a,b,e,1+a,1+b,1+e\}$.  Since the set of zero 
  divisors is closed under addition, $R$ is a local ring of order 8. By \cite[p.~687]{corbas1}, $R$
  is one of the following:
  \begin{itemize}
   \item $\GF{8}$, which has no nontrivial zero--divisors,
   \item $\ZZ_2[x]/(x^3)$, but $\Gamma(R)=P_3$,
   \item $\ZZ_2[x,y]/(x^2,xy,y^2)$, which gives us $\Gamma(R)=K_3$,   
   \item $\ZZ_4[x]/(2x,x^2-2)$, but $\Gamma(R)=P_3$,
   \item $\ZZ_4[x]/(2x,x^2)$, which gives us $\Gamma(R)=K_3$,
   \item $\ZZ_8$, but $\Gamma(R)=P_3$.
  \end{itemize}
  Thus the Lemma follows.
 \end{proof}

\bigskip

We are now in a position to characterize all rings such that their  zero--divisor graphs contain exactly 
one 3-cycle.

\bigskip

\begin{Theorem}\label{thm:13-cycle}
If $R$ is a commutative ring with identity and $\Gamma(R)$ contains exactly one 3-cycle,
 then exactly one of the following statements holds:
\begin{enumerate}
  \item
  $R$ is isomorphic to a direct product $R_1\times R_2$, where 
 $R_1,R_2\in \{\ZZ_4,T_2(\ZZ_2)\}$ and $\Gamma(R)=K^{\;  \triangle(2,2,0)}_{3,3}$,
  
  \item
  $R \simeq T_2(\GF{4}))$, 
 $R \simeq  \ZZ_4[x] / (x^2+x+1)$,
 $R \simeq \ZZ_2[x,y]/(x^2,xy,y^2)$, or $R \simeq \ZZ_4[x]/(2x,x^2)$,
 and $\Gamma(R)=K_3=K^{\;  \triangle(0,0,0)}_{1,1}$,


  \item
    $R \simeq \ZZ_{16}$ or $R \simeq \ZZ_2[x]/(x^4)$ and $\Gamma(R)=K^{\;  \triangle(4,0,0)}_{1,1}$.
\end{enumerate}

\end{Theorem}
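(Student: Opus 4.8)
The plan is to split according to whether $\Gamma(R)$ contains a cycle of length $\ge 4$. If it does, Proposition \ref{thm:ring34} immediately gives case~(1), with $R\simeq R_1\times R_2$, $R_1,R_2\in\{\ZZ_4,T_2(\ZZ_2)\}$ and $\Gamma(R)=K^{\;\triangle(2,2,0)}_{3,3}$. So assume $\Gamma(R)$ has no $k$-cycle with $k\ge 4$; since it has exactly one $3$-cycle, that $3$-cycle is its only cycle, and Corollary \ref{cor:triangle} yields $\Gamma(R)=K^{\;\triangle(r_1,r_2,r_3)}_{1,1}$, a triangle with $r_i$ pendants attached to its three vertices. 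If $r_1=r_2=r_3=0$ then $\Gamma(R)=K_3$ and Lemma \ref{thm:K3} puts us in case~(2). The remaining case, some $r_i\ge 1$, is the substantial one and should land in case~(3).

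So suppose $\Gamma(R)=K^{\;\triangle(r_1,r_2,r_3)}_{1,1}$ with (say) $r_1\ge 1$. Since $\Gamma(R)$ is finite and $R$ is not a domain, $R$ is finite by \cite{andliv99}; and $R$ is local, for otherwise $R$ is a product of two nontrivial rings and Lemma \ref{thm:times} would force a $4$-cycle. Let $\mathfrak m$ be the maximal ideal and $t$ its nilpotency index; then $\mathfrak m$ is nilpotent, $Z(R)=\mathfrak m$, and the vertices of $\Gamma(R)$ are the nonzero elements of $\mathfrak m$. Every nonzero $s\in\mathfrak m^{t-1}$ kills $\mathfrak m$, hence is adjacent to every other vertex, i.e. has degree $|\mathfrak m|-2$. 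But in $K^{\;\triangle(r_1,r_2,r_3)}_{1,1}$ a vertex can have degree $|\mathfrak m|-2$ only if all pendants are attached to it; so there is a unique such vertex $c$, all pendants lie on $c$, and (after relabelling) $r_2=r_3=0$. Hence $\mathfrak m^{t-1}=\{0,c\}$; as this is an $R/\mathfrak m$-vector space of order $2$, we get $R/\mathfrak m\cong\GF{2}$, so $|R|=2^n$ and $|\mathfrak m|=2^{n-1}=r_1+4$.

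Now fix a pendant $p$. Its annihilator $\Ann(p)$ is an ideal contained in $\mathfrak m$: if $0\ne x\in\Ann(p)$ then $x=p$ (forcing $p^2=0$) or $x$ is adjacent to $p$, hence $x=c$; so $\Ann(p)\subseteq\{0,c,p\}$, and since $\Ann(p)$ is a subgroup of the $2$-group $(R,+)$ with $2\le|\Ann(p)|\le 3$ we get $\Ann(p)=\{0,c\}$. Therefore $Rp\cong R/\Ann(p)$ has $|R|/2=2^{n-1}=|\mathfrak m|$ elements, so $Rp=\mathfrak m$: thus $\mathfrak m=(p)$ is principal and $R$ is a finite chain ring with residue field $\GF{2}$. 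For a chain ring with valuation $v$, distinct $x,y$ are adjacent iff $v(x)+v(y)\ge t$, there are $2^{t-1-i}$ elements of valuation $i$ for $1\le i\le t-1$, and these sum to $2^{t-1}-1=|\mathfrak m|-1$, so $t=n$. The unique valuation-$(t-1)$ element is adjacent to all others, while a valuation-$(t-2)$ element has degree $2^{t-2}-2$; since no non-central vertex of $K^{\;\triangle(r_1,0,0)}_{1,1}$ has degree exceeding $2$, this forces $t\le 4$, and $r_1\ge 1$ gives $|\mathfrak m|=2^{t-1}\ge 5$, hence $t\ge 4$. So $t=4$, $|R|=16$, $r_1=4$, and the valuation picture indeed reproduces $\Gamma(R)=K^{\;\triangle(4,0,0)}_{1,1}$.

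It then remains to list the finite chain rings $R$ with $|R|=16$, residue field $\GF{2}$ and $\mathfrak m^4=0\ne\mathfrak m^3$: if the characteristic is $2$ then $R=\GF{2}[\pi]\cong\ZZ_2[x]/(x^4)$, if it is $16$ then $R=\ZZ_{16}$, and writing $2=u\pi^{e}$ in the remaining cases the constraints $\pi^4=0\ne\pi^3$ and $|R|=16$ exclude characteristic $8$ and leave only characteristic $4$ with $2=u\pi^{2}$, where $R\simeq\ZZ_4[x]/(g)$ for a monic quadratic $g$. I expect this final enumeration to be the main obstacle: one must determine exactly which $g$ yield a local chain ring of length $4$ and which of the resulting rings are isomorphic (note that $\ZZ_4[x]/(x^2-2)$ is such a ring, and must be compared against $\ZZ_{16}$ and $\ZZ_2[x]/(x^4)$), best handled by invoking a classification of the commutative rings of order $16$. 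Finally, (1)--(3) are mutually exclusive since the graphs $K^{\;\triangle(2,2,0)}_{3,3}$, $K_3$ and $K^{\;\triangle(4,0,0)}_{1,1}$ are pairwise non-isomorphic.
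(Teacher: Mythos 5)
Up to the point where you conclude that $R$ is a finite chain ring of order $16$ with residue field $\GF{2}$, $\mathfrak{m}^4=0\neq\mathfrak{m}^3$ and $\Gamma(R)=K^{\; \triangle(4,0,0)}_{1,1}$, your argument is essentially sound and follows a genuinely different route from the paper: the paper instead eliminates $r_3$ and then $r_2$ by explicit element manipulations (using Lemmas \ref{thm:technical} and \ref{thm:times}), bounds $\vert R\vert\le 16$ via $Re\cong R/\Ann(e)$ with $Re\subseteq\Ann(a)\cap\Ann(b)\subseteq\{0,a,b,e\}$, and then combs through the list of rings of order $16$ in \cite{corbas1}. One loose end in your version: you assert that $\Gamma(R)$ is finite before invoking \cite{andliv99}, but nothing proved up to that point bounds the number of pendants; the paper's $Re\cong R/\Ann(e)$ observation gives finiteness without assuming it.

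The genuine gap is the final enumeration you explicitly leave open, and it is not a routine appeal to a classification that will come out in favour of the stated list. The ring $R_0=\ZZ_4[x]/(x^2-2)$ that you flag really is a chain ring of order $16$ with residue field $\GF{2}$ and $\pi=\overline{x}$, $\pi^2=2$, $\pi^3=2\pi\neq 0$, $\pi^4=4=0$; it has characteristic $4$, hence is isomorphic to neither $\ZZ_{16}$ (characteristic $16$) nor $\ZZ_2[x]/(x^4)$ (characteristic $2$); and a direct check shows $\Gamma(R_0)=K^{\; \triangle(4,0,0)}_{1,1}$ with exactly one $3$-cycle, namely $2-2\pi-(2+2\pi)$ (all three pairwise products vanish), while each of $\pi,3\pi,2+\pi,2+3\pi$ is adjacent only to $2\pi$ (for instance $\pi\cdot 2\pi=2\pi^2=4=0$, but $2\cdot\pi=2\pi\neq0$ and $(2+2\pi)\pi=2\pi\neq0$). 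The same holds for $\ZZ_4[x]/(x^2+2x+2)$. These rings satisfy the hypothesis of the theorem, are local, and have $\Gamma\neq K_3$, so they fall under none of (1)--(3). Consequently your concluding step cannot be completed so as to prove case (3) as stated: carried out honestly, your (correct) chain-ring analysis shows that case (3) must be enlarged by the characteristic-$4$ chain rings above, and it pinpoints exactly where the paper's own proof is in trouble, namely the brief appeal to \cite{corbas1} asserting that no ring of characteristic $4$ occurs.
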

\medskip
\begin{proof}
If $\Gamma(R)$ apart from the 3-cycle also contains an $n$-cycle for some $n \geq 4$,
then Proposition \ref{thm:ring34} implies (1) and if  $\Gamma(R)=K_3$, then Lemma \ref{thm:K3}
implies (2).

By Proposition \ref{thm:characterizationg=3} the only remaining case is $\Gamma(R)=K^{\;  \triangle(r_1,r_2,r_3)}_{1,1}$, and let  $a-b-e-a$ denote the 3-cycle and let  $a_i, b_j,e_\ell \in S$ such that $\deg(a_i)=\deg(b_j)=\deg(e_\ell)=1$ 
  and $a-a_i$, $b-b_j$, $e-e_\ell$ are edges in $\Gamma(S)$ for  
  $i=1,2,\ldots,r_1$, $j=1,2,\ldots,r_2$ and  $\ell=1,2,\ldots,r_3$.  Note that
  $((1+b_j)a)b= ((1+b_j)a)e=((1+b_j)a)a_i=0$.  This yields $(1+b_j)a=0$, since
$(1+b_j)a=a$ implies that $b_ja=0$. 

If $r_1, r_2, r_3 > 0$ then note that $b_je=e$ and then $(1+b_j)e=e+e=2e=0$, since $(2e)a=(2e)b=(2e)e_\ell=0$
and $2e \neq e$. Since $(1+b_j) b \neq 0$, we have $1+b_j=b$.  By multiplying this equation with $b$, we obtain $b^2=b$ and 
this gives us a contradiction by Lemma \ref{thm:times}.
Therefore $r_3=0$.  

Now, we shall prove that also $r_2=0$.
Since the left $R$-module $Re$ is isomorphic to the quotient module $R/Ann(e)$, and
both $Re$ and $Ann(e)$ have at most 4 elements ($0, e, a, b$), we know that $R$ is a ring of at
most 16 elements.  We also know that $R$ is a directly indecomposable ring by Lemma \ref{thm:times},
therefore it contains no non-trivial idempotents.  Thus, by \cite[Theorem VII.7]{mcdonald} $R$ is a local ring
and the set of zero divisors $Z(R)$ is the Jacobson radical of $R$. 
Assume that $r_2 > 0$. Similarly as above, we can see that $(1+b_j)a \neq a$, so $(1+b_j)a=0$ and thus $1+b_j$ is a zero divisor.
Since $b_j$ is a zero divisor as well, we have that $1=1+b_j+(-b_j)$ is a zero divisor, which gives us a contradiction.

Therefore, $\Gamma(R)=K^{\;  \triangle(r_1,0,0)}_{1,1}$ and we can assume that $r_1 > 0$.  Note that 
$(a^2)a_i=(a^2)b=(a^2)e=0$, so $a^2=0$ since there are no non-trivial idempotents in $R$.  Then 
for each $a_i$ such that $a_ia=0$ we also obtain $(a_i+a)a=(a_i+b)a=(a_i+e)a=0$.
Observe that $a_i+a,a_i+b,a_i+e\notin \{0,a,e,b\}$: for example, if $a_i+b=a$ then $a_ie=0$; 
if $a_i+b=e$ then since $(a+b)a=(a+b)e=0$, it follows that $a+b=e$, and therefore $a_i=a$.  Similarly,
we treat other cases.

Therefore $r_1 \geq 4$ and thus
$\vert Z(R) \vert \geq 8$.  Since $Z(R) \neq R$ this implies that $\vert R \vert = 16$.  Now, 
all rings of order 16 are listed in \cite[pages 687--690]{corbas1}, and we can check which ones are commutative rings
such that their zero--divisor graph only has one 3-cycle.
Among the rings of characteristic 2, the only suitable ring is $\ZZ_2[x]/(x^4)$, since all 3 commutative rings in 
\cite[page 687, case 1.2]{corbas1} have $J^2=\{0,a\}$ and the generators $x_1, x_2, a$ of $J$ give us an additional
3-cycle, either $a-x_1-x_2-a$ in case $x_1x_2=0$, or $a-a_i-a+a_i-a$ in case $x_1^2=x_2^2=0$.  Similarly, we can deal with the
\cite[page 689, case 2.2.a]{corbas1}, which proves that there are no such rings of characteristic 4.  It is easy to check all 
the remaining cases to see that the only other ring that can occur is the ring $\ZZ_{16}$.
\end{proof}

\bigskip
\bigskip


\begin{thebibliography}{References}





  \bibitem{akbari04}
   S.~Akbari, A.~Mohammadian: \emph{On the zero--divisor graph of a commutative ring}, J.~Algebra~ 274 (2004), no. 2, 847--855.

  \bibitem{akbari06}
   S.~Akbari, A.~Mohammadian: \emph{Zero--divisor graphs of non-commutative rings},  J.~Algebra~ 296 (2006),  no. 2, 462--479. 

  \bibitem{akbari07}
   S.~Akbari, A.~Mohammadian: \emph{On zero--divisor graphs of finite rings}, 
   J.~Algebra~ 314 (2007), no.1, 168--184.
 
  \bibitem{and08}
  D.~F.~Anderson: \emph{On the diameter and girth of a zero--divisor graph. II}, Houston~J.~Math.~ 34 (2008),  no. 2, 361--371.
  
  \bibitem{andbad08}
  D.~F.~Anderson, A.~Badawi: \emph{On the zero--divisor graph of a ring}, Comm.~Algebra~ 36 (2008),  no. 8, 3073--3092. 
  
  \bibitem{andlevsha03}
   D.~F.~Anderson, R.~Levy, J.~Shapiro: \emph{Zero--divisor graphs, von Neumann regular rings, and Boolean 
   algebras}, J.~Pure Appl.~Alg.~ 180 (2003), 221--241.

  \bibitem{andliv99}
   D.~F.~Anderson, P.~S.~Livingston: \emph{The zero--divisor graph of a commutative ring}, J.~Algebra~ 217 (1999), 434--447.

  \bibitem{andmul07}
   D.~F.~Anderson, S.~B.~Mulay: \emph{On the diameter and girth of a zero--divisor graph}, J.~Pure Appl.~Alg.~ 210 (2007), 
   543--550.

  \bibitem{atani08}
   S.~E.~Atani: \emph{The zero--divisor graph with respect to ideals of a commutative semiring}, 
   Glas.~Mat.~ 43(63) (2008), 309-–320.

  \bibitem{atani09}
   S.~E.~Atani: \emph{An ideal-based zero--divisor graph of a commutative semiring}, 
   Glas.~Mat.~ 44(64) (2009), 141-–153.
   
  \bibitem{beck88}
  I.~Beck: \emph{Coloring of commutative rings}, J.~Algebra~ 116 (1988), 208--226.

  \bibitem{bozpet09}
  I.~Bo\v zi\' c, Z.~Petrovi\' c: \emph{Zero--divisor graphs of matrices over commutative rings}, Comm.~Algebra~ 37 (2009), 
  no. 4, 1186--1192.

  \bibitem{cann05}
  A.~Cannon, K.~Neuerburg, S.~P.~Redmond: \emph{Zero--divisor graphs of nearrings and semigroups}, 
  in: H. Kiechle, A. Kreuzer, M.J. Thomsen (Eds.), Nearrings and Nearfields, Springer, Dordrecht, The Netherlands, 2005, 
  189--200.

  \bibitem{chleewang10} 
 H.~J.~Chiang-Hsieh, P.~F.~Lee, H.~J.~Wang: \emph{
The embedding of line graphs associated to the zero--divisor graphs of commutative rings},
Israel J. Math. 180 (2010), 193--222. 

  \bibitem{corbas1}
  B.~Corbas, G.~D.~Williams: \emph{Rings of order $p^5$. I. Nonlocal rings.},  J.~Algebra~ 231 (2000),  no. 2, 677--690.


  \bibitem{dem02}
  F.~R.~DeMeyer, T.~McKenzie, K.~Schneider: \emph{The zero--divisor graph of a commutative semigroup}, 
  Semigroup Forum, 65 (2002), 206--214.
  
  \bibitem{demdem}
  F.~R.~DeMeyer, L.~DeMeyer: \emph{ Zero divisor graphs of semigroups}, J.~Algebra~283 (2005), no. 1, 190--198.
  
  \bibitem{dem10}
  L.~DeMeyer, L.~Greve, A.~Sabbaghi, J.~Wang: \emph{The zero--divisor graph associated to a 
  semigroup}, Comm. Algebra 38 (2010), no. 9, 3370--3391. 
  
  \bibitem{dem10rm}
 L.~DeMeyer, Y.~Jiang, C.~Loszewski, E.~Purdy: \emph{ Classification of commutative zero-divisor semigroup graphs}, 
 Rocky Mountain J. Math. 40 (2010), no. 5, 1481--1503.
 
  \bibitem{HW}
 U.~Hebisch, H.~J.~Weinert, \emph{Semirings: algebraic theory and applications in computer science}.
 Series in Algebra, 5. World Scientific Publishing Co., Inc., River Edge, NJ, 1998.
 
   \bibitem{lucas06}
   T.~G.~Lucas: \emph{The diameter of a zero divisor graph}, J.~Algebra~ 301 (2006), 174--193.

  \bibitem{mcdonald}
    B.~R.~McDonald: {\sl Finite rings with identity}, Marcel Dekker Inc., New York, 1974.
    
  \bibitem{ragh}
   R.~Raghavendran: \emph{Finite associative rings},  Compositio Math.~ 21 (1969), 195--229.

  \bibitem{redmond02}
   S.~P.~Redmond: \emph{The zero--divisor graph of a non-commutative ring},  Int.~J.~Commut. 
   Rings 1 (2002), no. 4, 
   (1999), 203--211.





%

\end{thebibliography}
\end{document}